\newcommand{\mz}{\ensuremath{\mathbb Z}}
\newcommand{\mr}{\ensuremath{\mathbb R}}
\newcommand{\mc}{\ensuremath{\mathbb C}}
\newcommand{\gammamodh}{\ensuremath{\Gamma\backslash\mathbb H}}
\newcommand{\shortmod}{\ensuremath{\negthickspace \negthickspace \negthickspace \pmod}}
\newcommand{\half}{\ensuremath{ \frac{1}{2}}}
\newcommand{\intR}{\int_{-\infty}^{\infty}}
\newcommand{\sumstar}{\sideset{}{^*}\sum}
\newcommand{\cF}{\ensuremath{\mathcal{F}}}
\newenvironment{bothcases}
  {\left \lbrace \begin{aligned}}
  {\end{aligned}\right\rbrace}
\theoremstyle{plain}	
	\newtheorem*{rema}{Remark}	
	\newtheorem{mytheo}{Theorem}[section]
	\newtheorem{mycoro}[mytheo]{Corollary}
     \newtheorem{mylemma}[mytheo]{Lemma}
	\newtheorem{mydefi}[mytheo]{Definition}
	\newtheorem{myremark}[mytheo]{Remark}
\theoremstyle{remark}
\numberwithin{equation}{section}
\begin{document}
\author{Rizwanur Khan}
\address{Department of Mathematics \\
 	 University of Mississippi \\
 	  University \\
 	  MS 38677}
 \email{rrkhan@olemiss.edu}

 \author{Matthew P. Young} 
 \address{Department of Mathematics \\
 	  Texas A\&M University \\
 	  College Station \\
 	  TX 77843-3368}
 \email{myoung@math.tamu.edu}
 
\subjclass[2020]{11M41, 11F12, 58J51} 
\keywords{$L$-functions, symmetric-square, moments, subconvexity, Maass forms, Quantum unique ergodicity.}

 \thanks{This material is based upon work supported by the National Science Foundation under agreement nos. DMS-2001183 (R.K.) and DMS-1702221 (M.Y.) and the Simons Foundation under award no. 630985 (R.K.).  Any opinions, findings and conclusions or recommendations expressed in this material are those of the authors and do not necessarily reflect the views of the National Science Foundation.}

 \begin{abstract} We establish sharp bounds for the second moment of symmetric-square $L$-functions attached to Hecke Maass cusp forms $u_j$ with spectral parameter $t_j$, where the second moment is a sum over $t_j$ in a short interval. At the central point $s=1/2$ of the $L$-function, our interval is smaller than previous known results. More specifically, for $|t_j|$ of size $T$, our interval is of size $T^{1/5}$, while the previous best was $T^{1/3}$ from work of Lam. A little higher up on the critical line, our second moment yields a subconvexity bound for the symmetric-square $L$-function.  More specifically, we get subconvexity at $s=1/2+it$ provided $|t_j|^{6/7+\delta}\le |t| \le (2-\delta)|t_j|$ for any fixed $\delta>0$. Since $|t|$ can be taken significantly smaller than $|t_j|$, this may be viewed as an approximation to the notorious subconvexity problem for the symmetric-square $L$-function in the spectral aspect at $s=1/2$.
\end{abstract}
\title{Moments and hybrid subconvexity for symmetric-square $L$-functions }
\maketitle

\section{Introduction} 

\subsection{Background}

The widely studied subconvexity problem for automorphic $L$-functions is completely resolved for degree $\le 2$. For uniform bounds, over arbitrary number fields, this is due to Michel and Venkatesh \cite{micven}; for superior quality bounds in various special cases, this is due to many authors, of which a small sample is \cite{ JM, blohar, bour, bhkm, PY}. The next frontier is degree $3$, but here the subconvexity problem remains a great challenge, save for a few spectacular successes. The first breakthrough is due to Xiaoqing Li \cite{li}, who established subconvexity for $L(f,1/2+it)$ on the critical line ($t$-aspect), where $f$ is a fixed self-dual Hecke-Maass cusp form for $SL_3(\mathbb{Z})$. This result was generalized by Munshi \cite{mun1}, by a very different method, to forms $f$ that are not necessarily self-dual.  Munshi \cite{mun2} also established subconvexity for twists $L(f\times \chi, 1/2)$ in the $p$-aspect, where $\chi$ is a primitive Dirichlet character of prime modulus $p$. Subconvexity in the spectral aspect of $f$ itself is much harder, and even more so when $f$ is self-dual due to a conductor-dropping phenomenon. Blomer and Buttcane \cite{blobut}, Kumar, Mallesham, and Singh \cite{KMS}, and Sharma \cite{sharma} have established subconvexity for $L(1/2, f)$ in the spectral aspect of $f$ in many cases, but excluding the self-dual forms.

A self-dual $GL_3$ Hecke-Maass cusp form is known to be a symmetric-square lift from $GL_2$ \cite{soud}. Let $u_j$ be a Hecke-Maass cusp form for the full modular group $SL_2(\mz)$, with Laplace eigenvalue $1/4 + t_j^2$. It is an outstanding open problem to prove subconvexity for the associated symmetric-square $L$-function $L(\mathrm{sym}^2 u_j, 1/2)$ in the $t_j$-aspect. Such a bound would represent major progress in the problem of obtaining a power-saving rate of decay in the Quantum Unique Ergodicity problem \cite{iwasar}. A related problem is that of establishing the Lindel\"of-on-average bound
\begin{align}
\label{secondmomentsum} \sum_{T\le t_j\le T+\Delta} |L(\mathrm{sym}^2 u_j, 1/2 +it)|^2 \ll \Delta T^{1+\epsilon}
\end{align}
where we assume throughout that $T^\epsilon \le \Delta\le T^{1-\epsilon}$, and we generally aim to take $\Delta$ as small as possible.  Such an estimate is interesting in its own right, and also yields by positivity a bound for each $L$-value in the sum. At the central point ($t=0$), if \eqref{secondmomentsum} can be established for $\Delta=T^\epsilon$, it would give the convexity bound for $L(\mathrm{sym}^2 u_j, 1/2)$; the hope would then be to insert an amplifier in order to prove subconvexity. While a second moment bound which implies convexity at the central point is known in the level aspect by the work of Iwaniec and Michel \cite{iwamic}, in the spectral aspect the problem is much more difficult. The best known result until now for \eqref{secondmomentsum} was $\Delta=T^{1/3+\epsilon}$ by Lam \cite{Lam}. (Lam's work actually involves symmetric-square $L$-functions attached to holomorphic Hecke eigenforms, but his method should apply equally well to Hecke-Maass forms.) Other works involving moments of symmetric square $L$-functions include \cite{Bl, K, J, KD, BF, Ba, N}.

\subsection{Main results}
One of the main results of this paper is an approximate version of the subconvexity bound for $L(\mathrm{sym}^2 u_j, 1/2)$. Namely, we establish subconvexity for $L(\mathrm{sym}^2 u_j, 1/2+it)$ for $t$ small, but not too small, compared to $2t_j$. This hybrid bound (stated precisely below) seems to be the first subconvexity bound for symmetric-square $L$-functions in which the dominant aspect is  the spectral parameter $t_j$. For comparison, note that bookkeeping the proofs of Li \cite{li} or Munshi \cite{mun1} would yield hybrid subconvexity bounds for $t_j$ (very) small compared to $t$. Our method also yields a hybrid subconvexity bound for $L(\mathrm{sym}^2 u_j, 1/2+it)$ when $t$ is larger (but not too much larger) than $2t_j$, but for simplicity we refrain from making precise statements. We do not prove anything when $t$ is close to $2t_j$, for in this case the analytic conductor of the $L$-function drops. In fact it is then the same size as the analytic conductor at $t=0$, where the subconvexity problem is the hardest.

Our approach is to establish a sharp estimate for the second moment as in \eqref{secondmomentsum}, which is strong enough to yield subconvexity in certain ranges.  
 \begin{mytheo} 
\label{thm:mainthm}
 Let $0<\delta<2$ be fixed, and let $U,T,\Delta>1$ be such that
 \begin{equation}
 \label{eq:mainthmcondition}
\frac{T^{3/2+\delta}}{\Delta^{3/2}} \le U\le (2-\delta) T.
 \end{equation}
We have
\begin{equation}
\label{eq:mainthmbound}
 \sum_{T < t_j <T+\Delta} 
 |L(\mathrm{sym}^2 u_j, 1/2+iU)|^2
 \ll \Delta T^{1+\epsilon}.
\end{equation}
\end{mytheo}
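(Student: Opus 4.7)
Begin with an approximate functional equation for $L(\mathrm{sym}^2 u_j, \tfrac12 + iU)$, squared. Under the hypotheses of the theorem, $U \ge T^{3/2+\delta}/\Delta^{3/2} \ge T^\delta$ (using $\Delta \le T$) and $U \le (2-\delta)T$, so the analytic conductor
\[ C(t_j, U) \asymp (1+|U|)\,(1+|2t_j+U|)\,(1+|2t_j-U|) \asymp U T^2 \]
for $t_j \in [T, T+\Delta]$. The AFE then expresses $|L(\mathrm{sym}^2 u_j, \tfrac12+iU)|^2$ as a double sum of the shape
\begin{equation*}
\sum_{m,n} \frac{\lambda_j(m^2)\lambda_j(n^2)}{\sqrt{mn}}\left(\frac{n}{m}\right)^{iU} W_{t_j,U}\!\left(\frac{mn}{UT^2}\right),
\end{equation*}
with $W$ a smooth cutoff of compact support, plus a dual companion. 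I would insert a smooth nonnegative majorant $h(t_j)$ of the indicator of $[T, T+\Delta]$, and apply the Kuznetsov trace formula to $\sum_{t_j} h(t_j)\,\lambda_j(m^2)\lambda_j(n^2)$ with Hecke indices $m^2, n^2$. This decomposes into a diagonal ($m = n$), Eisenstein and holomorphic contributions, and an off-diagonal consisting of the Kloosterman sums $S(m^2, n^2; c)$ weighted by the Bessel transform $\mathcal H h$ of the test function.

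The diagonal is a short-interval spectral moment of $L(\mathrm{sym}^2 u_j, 1)$-type quantities and is $O(\Delta T^{1+\varepsilon})$ by the Rankin--Selberg method, matching the target; the Eisenstein and holomorphic contributions yield comparable bounds. The bulk of the work is the off-diagonal. Because $h$ localizes at $t_j \in [T, T+\Delta]$, the transform $\mathcal H h(x)$ is essentially concentrated near $x \asymp T^2$, forcing $c \asymp \sqrt{mn}/T$, and within this window oscillates in $\sqrt{mn}$ with secondary scale $T/\Delta$. Opening the Kloosterman sum as $\sumstar_{a \pmod c} e_c(am^2 + \bar a n^2)$ and applying Poisson summation in both $m$ and $n$ against the smooth cutoffs produces quadratic Gauss sums in the residues and dual sums of length dictated by the reciprocals of the cutoff scales of $W$. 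Analyzing the resulting oscillatory integral by stationary phase, balancing the three competing phases coming from $(n/m)^{iU}$, from the Bessel transform, and from the Poisson kernel, shortens the dual sum; the Weil bound $|S(m^2,n^2;c)| \ll c^{1/2+\varepsilon}(m^2,n^2,c)^{1/2}$ and trivial summation in $c$ then give an off-diagonal bound of size $\Delta T^{1+\varepsilon}$ precisely when $U^{3/2}\Delta^{3/2} \ge T^{3/2+\delta}$, i.e.\ under \eqref{eq:mainthmcondition}.

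The main obstacle is the stationary-phase analysis of the Poisson-dualized oscillatory integrals, which must track three interacting frequencies and yield uniform asymptotics across the range of $U$ in \eqref{eq:mainthmcondition}; the quadratic Gauss sums over residues mod $c$ must also be controlled uniformly, with care when $c$ shares factors with $mn$. The hypothesis $U \le (2-\delta)T$ keeps the argument away from the conductor-dropping point $U = 2T$, where $C$ collapses back to size $T^2$ and the stationary-phase expansion degenerates, making the analysis as hard as the full subconvexity problem at the central point.
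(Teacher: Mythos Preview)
Your plan through the approximate functional equation, the Kuznetsov formula, the diagonal estimate, and Poisson summation in $m$ and $n$ matches the paper's approach. The gap is in the last step. After Poisson in $m,n$ the Kloosterman sum $S(m^2,n^2;c)$ is gone: it has been transformed into a complete character sum that evaluates (Lemma~\ref{lemma:TabcCalculation}) essentially to $c^{3/2}\big(\tfrac{k\ell}{c}\big)e_c(-k\ell/4)$. Invoking the Weil bound for $S(m^2,n^2;c)$ at that stage is therefore incoherent, and ``trivial summation in $c$'' is genuinely insufficient. In the model case $U=T$, $\Delta=T^{1/2}$ one arrives at
\[
\frac{1}{T}\sum_{k,\ell\sim T}\ \sum_{c\sim T^{3/2}}\Big(\frac{k}{\ell}\Big)^{iT} e\Big(\frac{3k\ell}{4c}\Big)\Big(\frac{k\ell}{c}\Big),
\]
which must be bounded by $T^{3/2}$; the trivial bound is $T^{5/2}$. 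The stationary-phase analysis you describe is precisely what fixes $k,\ell\sim T$ here and has already been fully spent; there is no further shortening of the dual sums to be had from it.

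What is missing is cancellation in the $c$-sum. The paper obtains this by reading the $c$-sum as (essentially) $L(s,\chi_{k\ell})$ for the Jacobi symbol, applying its functional equation (i.e.\ Poisson in $c$), and observing that in the dual expression the phase separates from $k,\ell$. The off-diagonal then becomes a bilinear form in quadratic characters, and Heath-Brown's quadratic large sieve finishes the job (yielding $O(T^{5/4+\varepsilon})$ in the model case above). Without these two ingredients---the functional-equation/Poisson step in $c$ and the quadratic large sieve---one recovers at best the earlier Iwaniec--Michel/Lam approach, which does not reach the range \eqref{eq:mainthmcondition}; your claimed endgame does not produce the condition $U\ge T^{3/2+\delta}/\Delta^{3/2}$.
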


\begin{mycoro}
Let $0<\delta<2$ be fixed. For $|t_j|^{6/7+\delta} \leq U \leq (2-\delta) |t_j|$, we have the hybrid subconvexity bound 
\begin{equation} 
\label{eq:hybridsubconvexity}
 L(\mathrm{sym}^2 u_j, 1/2+iU) \ll |t_j|^{1+\epsilon} U^{-1/3}.
\end{equation} 
\end{mycoro}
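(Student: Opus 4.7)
The plan is to deduce the corollary directly from Theorem~\ref{thm:mainthm} by positivity, choosing the spectral window $\Delta$ as small as the hypothesis \eqref{eq:mainthmcondition} allows.

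Fix a small parameter $\eta>0$, to be absorbed later into the final $\epsilon$. Given $u_j$ with $|t_j|^{6/7+\delta}\le U\le (2-\delta)|t_j|$, I would take $T$ slightly below $|t_j|$ and set
\[
\Delta := \frac{|t_j|^{1+\eta}}{U^{2/3}},
\]
so that $u_j$ lies in $[T,T+\Delta]$. A short computation gives $\Delta^{3/2}=T^{3/2+3\eta/2}/U$, whence $T^{3/2+\eta/2}/\Delta^{3/2}=U/T^{\eta}\le U$; thus \eqref{eq:mainthmcondition} is satisfied with $\eta/2$ in the role of $\delta$. I would also verify the standing bracket $T^{\epsilon}\ll \Delta\ll T^{1-\epsilon}$: the lower bound follows from $U\le (2-\delta)|t_j|$ (yielding $\Delta\gg |t_j|^{1/3}$), and the upper bound from $U\ge |t_j|^{6/7+\delta}$ (yielding $\Delta\ll |t_j|^{3/7+\eta}$).

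Applying Theorem~\ref{thm:mainthm} and dropping all but the $u_j$ term on the left of \eqref{eq:mainthmbound} (every term is nonnegative), I would obtain
\[
|L(\mathrm{sym}^2 u_j,\tfrac12+iU)|^2 \ll \Delta\,T^{1+\epsilon} \ll |t_j|^{2+\eta+\epsilon}\,U^{-2/3}.
\]
Taking square roots and absorbing $\eta$ into the final $\epsilon$ yields \eqref{eq:hybridsubconvexity}.

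To verify that \eqref{eq:hybridsubconvexity} is a genuine subconvexity bound in the stated range, I would compare it to the convexity bound. The archimedean factor of $L(\mathrm{sym}^2 u_j,s)$ carries spectral parameters $s$ and $s\pm 2it_j$; at $s=\tfrac12+iU$ with $|U|\le (2-\delta)|t_j|$, these have magnitudes $U$, $|U+2t_j|\asymp |t_j|$, and $|2t_j-U|\asymp |t_j|$, producing an analytic conductor of size $\asymp Ut_j^2$ and a convexity bound $\ll (Ut_j^2)^{1/4+\epsilon}$. The bound \eqref{eq:hybridsubconvexity} beats this precisely when $U\gg |t_j|^{6/7}$, matching the hypothesis up to $\epsilon$. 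No genuine obstacle arises: once Theorem~\ref{thm:mainthm} is in hand, the deduction is entirely formal, with the only substantive step being the optimization of $\Delta$ against the constraint \eqref{eq:mainthmcondition}.
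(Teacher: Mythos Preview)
Your proof is correct and follows essentially the same approach as the paper: choose $\Delta = T^{1+\eta} U^{-2/3}$ (the paper writes $\Delta = T^{1+\delta} U^{-2/3}$), verify that \eqref{eq:mainthmcondition} holds, and drop all but the single term for $u_j$ by positivity. Your additional checks that $T^{\epsilon} \ll \Delta \ll T^{1-\epsilon}$ and that the resulting bound is subconvex for $U \ge |t_j|^{6/7+\delta}$ are accurate and spell out details the paper leaves implicit.
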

\begin{proof}
The bound follows by taking $\Delta = T^{1+\delta}U^{-2/3}$ in Theorem \ref{thm:mainthm} with $\delta$ chosen small enough. When $U\ge T^{6/7+\delta}$, this bound is subconvex. 
\end{proof}

Note that in Theorem \ref{thm:mainthm}, we are able to take $\Delta$ as small as $T^{1/3}$ at best. This requires $T \ll U \leq (2-\delta) T$ and for instance yields the subconvexity bound $L(\mathrm{sym}^2 u_j, 1/2+it_j)\ll |t_j|^{2/3+\epsilon}$.

We might also speculate that the lower bound in \eqref{eq:mainthmcondition} could plausibly be relaxed to $\Delta U \gg T^{1+\delta}$ (possibly with an additional term on the right hand side of \eqref{eq:mainthmbound}, as in \eqref{eq:finalboundpresimplified}) which would give subconvexity in the wider range  $ T^{2/3+\delta} \le U\leq(2-\delta)T$. For some reasoning on this, see the remark following \eqref{eq:use-of-assumption}.

For the central values we do not get subconvexity but we are able to improve the state of the art for the second moment. This is the other main result of this paper: we establish a Lindel\"of-on-average estimate for the second moment with $\Delta$ as small as $T^{1/5+\epsilon}$.
\begin{mytheo}
\label{thm:CentralValueBound}
 For $\Delta \ge T^{1/5+\varepsilon}$ and $0\leq U \ll T^{\varepsilon}$ we have
 \begin{equation}
 \label{eq:CentralValueBound}
 \sum_{T < t_j <T+\Delta} 
 |L(\mathrm{sym}^2 u_j, 1/2 +iU)|^2
 \ll \Delta T^{1+\varepsilon}.
 \end{equation}
\end{mytheo}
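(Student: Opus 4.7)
The proof parallels Theorem~\ref{thm:mainthm} but must handle the central-value regime where the analytic conductor $\asymp |t_j|^3$ is at its maximum and enjoys no reduction from the $U$-parameter. The starting point is an approximate functional equation for $|L(\mathrm{sym}^2 u_j, 1/2+iU)|^2$ of length $T^{3/2}$ in each variable, with coefficients $\lambda_j(n^2)\overline{\lambda_j(m^2)}$, together with a smooth spectral weight $h(t_j)$ concentrating $t_j$ on $[T, T+\Delta]$. After opening the square and inserting the harmonic weight $1/\|u_j\|^2$ (to be removed at the end via the upper bound for $L(\mathrm{sym}^2 u_j, 1)$), one applies the Kuznetsov trace formula. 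The diagonal contribution $n = m$ gives $\ll \Delta T^{1+\varepsilon}$, matching the right-hand side of \eqref{eq:CentralValueBound}.

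For the off-diagonal, the Bessel transform $\mathcal{B}$ of $h$ concentrates the modulus $c$ on $c \asymp nm/T$. After dyadic localization $n \sim N$, $m \sim M$, $c \sim C$ with $NM \asymp CT$ and $N, M \ll T^{3/2}$, the task reduces to bounding
\[
\sum_{c \sim C} \sum_{\substack{n \sim N \\ m \sim M}} \frac{\lambda_j(n^2)\,\lambda_j(m^2)\, S(n^2, m^2;\, c)}{c\,(nm)^{1/2+iU}}\,\mathcal{B}\!\left(\tfrac{4\pi nm}{c}\right) \ll \Delta\, T^{1+\varepsilon}.
\]
The plan is to apply GL$_3$ Voronoi summation in the $n$-variable---exploiting the identity $\sum_n \lambda_j(n^2)\, n^{-s} = L(\mathrm{sym}^2 u_j, s)/\zeta(2s)$, which identifies these coefficients with those of a GL$_3$ Dirichlet series---in order to dualize the $n$-sum. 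This is followed by Poisson summation on the modulus $c$ and a Cauchy--Schwarz step on $m$, after which the Weil bound applied to the residual exponential sums should produce the necessary square-root cancellation.

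The threshold $\Delta \geq T^{1/5+\varepsilon}$ is expected to emerge from balancing the length of the dual $n$-sum produced by GL$_3$ Voronoi (which grows like $C^3/N^2$), the range of $C$ dictated by the stationary-phase support of $\mathcal{B}$, and the amplitude of $\mathcal{B}$ itself (of order $\Delta \sqrt{T}/\sqrt{nm/c}$ in the transition regime). The main obstacle will be the detailed stationary-phase analysis of the oscillatory integrals produced by the composition of Voronoi and Poisson transforms, together with the treatment of boundary dyadic regimes---very small $C$, or dual sums of atypical length---which typically require separate handling. A further subtlety is the conversion from harmonic to natural spectral weights at the end, which must respect the nonnegativity of $|L(\mathrm{sym}^2 u_j, 1/2+iU)|^2$ to pass between the two normalizations without losing the exponent $1$ in $T$.
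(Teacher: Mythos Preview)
Your proposal contains a fundamental error in the structure of the off-diagonal term after Kuznetsov.  Once the Kuznetsov formula is applied, the spectral average $\sum_j h(t_j)\lambda_j(n^2)\overline{\lambda_j(m^2)}$ is \emph{replaced} by the diagonal plus a sum of Kloosterman sums; no Hecke eigenvalues remain.  The off-diagonal is
\[
\frac{1}{N}\sum_{c}\sum_{m,n}\frac{S(m^2,n^2;c)}{c\,(m/n)^{iU}}\,w_N(m)w_N(n)\,H\!\Big(\frac{4\pi mn}{c}\Big),
\]
with no factor $\lambda_j(n^2)\lambda_j(m^2)$.  Consequently there is no $GL_3$ Dirichlet series present to which $GL_3$ Voronoi could be applied, and the chain ``$GL_3$ Voronoi $\to$ Poisson in $c$ $\to$ Cauchy--Schwarz in $m$ $\to$ Weil bound'' never gets off the ground.  (Your displayed bound even depends on a fixed $j$, which is another symptom of the same confusion.)

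Two further inaccuracies: the archimedean parameters of $\mathrm{sym}^2 u_j$ are $0,\pm 2it_j$, so at $s=1/2$ the analytic conductor is $\asymp t_j^2$, not $t_j^3$; hence the approximate functional equation has length $\asymp T^{1+\varepsilon}$ in each variable, not $T^{3/2}$.  And the Bessel transform localizes $c$ to $c\ll mn/(\Delta T)$, not $c\asymp mn/T$.

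For comparison, the paper's route is: apply Poisson summation in \emph{both} $m$ and $n$ modulo $c$, which produces the explicit character sum $T(k,\ell;c)=\sum_{a,b}S(a^2,b^2;c)e_c(2ab+ak+b\ell)$; a direct Gauss-sum calculation shows this is essentially $c^{3/2}\big(\tfrac{k\ell}{c}\big)e_c(-k\ell/4)$.  One then performs a Mellin/functional-equation step in the $c$-variable (morally Poisson in $c$, carried out via the functional equation of $L(s,\chi_{k\ell})$), which separates $k,\ell$ from the phase.  The endgame is Heath-Brown's large sieve for quadratic characters applied to $\sum_q\big|\sum_k (\tfrac{q}{k})k^{-s}\big|^2$, and it is the balance in that large sieve which produces the threshold $\Delta\ge T^{1/5+\varepsilon}$.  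The Weil bound is not used.
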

It is a standing challenge to prove a Lindel\"{o}f-on-average
bound in \eqref{eq:CentralValueBound} with $\Delta = 1$.

Theorem \ref{thm:CentralValueBound} also has implications for the quantum variance problem.  To explain this, recall that Quantum Unique Ergodicity \cite{lind, sound} says that for any smooth, bounded function $\psi$ on $\gammamodh$, we have that $\langle |u_j|^2,  \psi\rangle \to \frac{3}{\pi}\langle 1,\psi \rangle$ as $t_j\to\infty$. By spectrally decomposing $\psi$, this is equivalent to demonstrating the decay of $\langle |u_j|^2, \varphi\rangle$ and $\langle |u_j|^2,  E_U\rangle$, where $\varphi$ is a fixed Hecke-Maass cusp form and $E_U=E(\cdot, \half+iU)$ is the standard Eisenstein series with $U$ fixed. The quantum variance problem is the problem of understanding the variance of these crucial quantitites.  More precisely, the quantum variance problem asks for non-trivial bounds on 
\begin{equation}
\label{eq:QuantumVarianceCuspidalComponent}
\sum_{T < t_j <T+\Delta} |\langle u_j^2, \varphi \rangle|^2,
\end{equation}
as well as the Eisenstein contribution
\begin{equation}
\label{eq:QuantumVarianceEisensteinComponent}
\sum_{T < t_j <T+\Delta} |\langle u_j^2, E_U \rangle|^2.
\end{equation}
Our Theorem \ref{thm:CentralValueBound} gives, by classical Rankin-Selberg theory, a sharp bound 
on \eqref{eq:QuantumVarianceEisensteinComponent}
for $\Delta\ge T^{1/5+\epsilon}$. 
  In turn, by Watson's formula \cite{W}, a sharp estimate for \eqref{eq:QuantumVarianceCuspidalComponent} boils down to establishing 
\begin{equation}
 \label{eq:CentralValueBoundCuspidalCase}
 \sum_{T < t_j <T+\Delta} 
 L(\mathrm{sym}^2 u_j \otimes \varphi,1/2)
 \ll \Delta T^{1+\varepsilon}.
 \end{equation}
It is plausible that the methods used to prove Theorem \ref{thm:CentralValueBound} should also generalize to show \eqref{eq:CentralValueBoundCuspidalCase} for $\Delta\ge T^{1/5+\epsilon}$, which would improve \cite{J}, but this requires a rigorous proof.  
 For quantum variance in the level aspect, see \cite{N}.

\subsection{Overview} 
We now give a rough sketch of our ideas for Theorems \ref{thm:mainthm} and \ref{thm:CentralValueBound}, both of which consider the second moment of the symmetric-square $L$-function. Let $h(t)$ be a smooth function supported essentially on $T <| t| <T+\Delta$, such as the one given in \eqref{eq:h-definition}. For $0\le U\le (2-\delta)T$, the analytic conductor of $L(\mathrm{sym}^2 u_j, 1/2+iU)$ is of size $T^2(U+1)$, so using an approximate functional equation, we have roughly 
\[
 \sum_{j\ge 1}  |L(\mathrm{sym}^2 u_j, 1/2+iU)|^2 h(t_j)=  \sum_{j\ge 1} \  \sum_{m,n \le T^{1+\epsilon}(U+1)^{1/2} } \frac{\lambda_j(m^2)\lambda_j(n^2)}{m^{1/2+iU}n^{1/2-iU}} h(t_j),
\]
which we need to show is bounded by $T^{1+\epsilon}\Delta$. Applying the Kuznetsov formula, the diagonal contribution is of size $O(T^{1+\epsilon}\Delta)$, while the off-diagonal contribution is roughly
\[
 \sum_{m,n \le T^{1+\epsilon}(U+1)^{1/2} } \frac{1}{m^{1/2+iU}n^{1/2-iU}} \sum_{c\ge 1} \frac{S(m^2,n^2,c)}{c} H\Big(\frac{4\pi mn}{c}\Big)
\]
for some transform $H$ of $h$, given in \eqref{Hdef}. We have by developing \eqref{eq:K+formula} that $H(x)$ is essentially supported on $x\ge T^{1-\epsilon}\Delta$ and roughly has the shape $H(x)=\frac{T\Delta}{x^{1/2}} e^{i(x - T^2/x)}$.  Thus in the generic ranges $m,n\sim T(U+1)^{1/2}$ and $c\sim \frac{mn}{T\Delta}$, writing $(n/m)^{i
U}=e(U\log (n/m)/2\pi)$ and not being very careful about factors of $\pi$ and such, the off-diagonal is
\begin{align}
\label{off-diag-sketch} \frac{\Delta^{3/2}}{U^{3/2}T^{3/2}}  \sum_{m,n \sim T(U+1)^{1/2} }  \ \sum_{c\sim T(U+1)/\Delta} S(m^2,n^2,c) e\Big( \frac{2 mn}{c}\Big)e\Big(-\frac{T^2c}{mn} +U \log (n/m) \Big).
\end{align}
The oscillatory factor $e( -\frac{T^2c}{nm} +U \log (n/ m))$ behaves differently according to whether $U$ is large or small.  When $U$ is large, the dominant phase is $U \log (n/ m)$, while when $U$ is small, the dominant phase is $-\frac{T^2c}{nm}  \sim - \frac{T}{\Delta}$.

Consider one extreme end of our problem: the case $U=T$ (covered by Theorem \ref{thm:mainthm}), so that the convexity bound is $T^{3/4+\epsilon}$. Since the diagonal after Kuznetsov is $O(T^{1+\epsilon}\Delta)$, the largest we can take $\Delta$ to establish subconvexity is $\Delta=T^{1/2-\delta}$ for some $\delta>0$. Thus for the off-diagonal, what we need to prove is roughly (specializing \eqref{off-diag-sketch} to $U= T, \Delta=T^{1/2}$ and retaining only the dominant phase)
\begin{align}
\label{off-diag-T}
 \frac{1}{T^{9/4}}   \sum_{m,n\sim T^{3/2} }  \  \sum_{c\sim T^{3/2} } S(m^2,n^2,c) e\Big( \frac{2mn}{c}\Big) e(T \log (n/m) )\ll T^{3/2}.
\end{align}
We split the $n$ and $m$ sums into residue classes modulo $c$ and apply Poisson summation to each. The off-diagonal then equals
\[
 \frac{1}{T^{9/4}}   \sum_{c\sim T^{3/2} } \ \sum_{k,\ell\in \mathbb{Z} } \frac{1}{c^2} T(k,\ell,c) I(k,\ell,c),
\]
where
\[
 I(k,\ell, c) = \iint e\Big(\frac{-kx-\ell y}{c}+T\log x-T\log y\Big)w\Big(\frac{x}{T^{3/2}},\frac{y}{T^{3/2}}\Big) dxdy
\]
for some smooth weight function $w$ which restricts support to $x\sim T^{3/2}, y\sim T^{3/2}$, and 
\[
T(k,\ell,c) =\sum_{a,b \bmod c} S(a^2, b^2, c)e\Big(\frac{2ab+ak+b\ell}{c}\Big).
\]
We compute this arithmetic sum in section \ref{section:misclemmas} and roughly get $T(k,\ell, c) =  c^{3/2} (\frac{k\ell}{c}) e(\frac{-k\ell}{4c} )$.
The integral is computed using stationary phase (see Sections \ref{section:oscillatoryintegrals} and \ref{poissonintegral}). We see that it is negligibly small unless $k,\ell \sim T$, in which case we get roughly $I(k,\ell,c)= T^2e(\frac{k\ell }{c}) (k/\ell)^{iT}$ (see Lemma \ref{lemma:IkellcAsymptoticForUBig} for the rigorous statement). Thus we need to show
\[
\frac{1}{T} \sum_{k,\ell\sim T} \  \sum_{c\sim T^{3/2}} \Big(\frac{k}{\ell}\Big)^{iT}  e\Big(\frac{ 3 k\ell}{4 c}\Big) \Big(\frac{k\ell}{c}\Big)\ll T^{3/2}.
\]
At this point we go beyond previous approaches to the second moment problem \cite{iwamic, Lam} by finding cancellation in the $c$ sum. We split the $c$ sum into arithmetic progresssions modulo $k\ell$ by quadratic reciprocity and apply Poisson summation,
getting that the off-diagonal equals 
\begin{equation}
\label{equation:Poisson-c-step}\frac{1}{T} \sum_{k,\ell\sim T} \Big(\frac{k}{\ell}\Big)^{iT}  \sum_{q\in\mathbb{Z}} \frac{1}{k\ell} \sum_{a\bmod k\ell} \Big(\frac{a}{k\ell}\Big)e\Big(  \frac{-aq}{k\ell}\Big) \int e\Big(\frac{ 3k\ell}{4x}+\frac{qx}{k\ell} \Big) w\Big(\frac{x}{T^{3/2}}\Big)dx.
\end{equation}
This Poisson summation step may be viewed as the key new ingredient in our paper. It leads to a simpler expression in two ways.  Firstly, an integration by parts argument shows that the $q$-sum can be restricted to $q \sim T$, which is significantly shorter than the earlier $c$-sum of length $T^{3/2}$.  A more elaborate stationary phase analysis of the integral shows that the integral is essentially independent of $k$ and $\ell$, which can be seen in rough form by the substitution $x\to xk\ell$ in \eqref{equation:Poisson-c-step}. The reader will not actually find an expression like \eqref{equation:Poisson-c-step} in the paper because we execute Poisson summation in $c$ in the language of Dirichlet series and functional equations. This allows us to more effectively deal with some of the more delicate features of this step. For example, see Remark \ref{remark-c}.

Evaluating the arithmetic sum and using stationary phase to compute the integral in \eqref{equation:Poisson-c-step}, we get that the off-diagonal equals
\begin{align}
\label{end-T} \frac{1}{T^{3/4}}\sum_{k,\ell\sim T} \  \sum_{q\sim T} e(\sqrt{q})\Big(\frac{q}{k\ell}\Big)\Big(\frac{k}{\ell}\Big)^{iT} = \frac{1}{T^{3/4}} \sum_{q\sim T} e(\sqrt{q}) \Big| \sum_{k\sim T} \Big(\frac{q}{k}\Big)k^{iT}\Big|^2.
\end{align}
Finally, applying Heath-Brown's \cite{HB} large sieve for quadratic characters, we get that the off-diagonal is $O(T^{5/4+\epsilon})$, which is better than the required bound in \eqref{off-diag-T}.

Now consider Theorem \ref{thm:CentralValueBound}, which deals with the other extreme end of our problem where $U$ is small. The treatment of  this follows the same plan as sketched above for large values of $U$, but the details are changed a bit because the oscillatory factor in \eqref{off-diag-sketch} behaves differently.  Consider the case $U=0$ (the central point) and $\Delta=T^{1/5}$, which is the best we can do in Theorem \ref{thm:CentralValueBound}. In the end, instead of \eqref{end-T}, one arrives roughly at an expression of the form 
\begin{equation}
\label{eq:sketchCentralValueBound}
 \sum_{q \sim T^{6/5}} e(T^{1/2}q^{1/4}) \Big|\sum_{k \sim T^{3/5}} \frac{ (\frac{k}{q})  }{\sqrt{k}} \Big|^2.
\end{equation}
Again, Heath-Brown's quadratic large sieve is the end-game, giving a bound of $\Delta T^{1+\epsilon}=T^{6/5+\epsilon}$. It is a curious difference that the $q$-sum in \eqref{eq:sketchCentralValueBound} is now actually \emph{longer} than the  $c$-sum from which it arose via Poisson summation, in contrast to the situation with $U = T$ presented earlier.  However, the gain is that the variables $q$ and $k$ become separated in the exponential phase factor (indeed, $k$ is completely removed from the phase in \eqref{eq:sketchCentralValueBound}).

\subsection{Notational Conventions}
\label{section:notation}
Throughout, we will follow the epsilon convention, in which $\epsilon$ always denotes an arbitrarily small positive constant, but not necessarily the same one from one occurrence to another. As usual, we will write $e(x)=e^{2\pi i x}$, and $e_c(x) = e(x/c)$.
For $n$ a positive odd integer, we let $\chi_n(m) = (\frac{m}{n})$ denote the Jacobi symbol.
If $s$ is complex, an expression of the form $O(p^{-s})$ should be interpreted to mean $O(p^{-\mathrm{Re}(s)})$.  This abuse of notation will only be used on occasion with Euler products.
We may also write $O(p^{- \min(s,u)})$ in place of $O(p^{-\min(\mathrm{Re}(s), \mathrm{Re}(u))})$.

Upper bounds in terms of the size of $U$ are usually expressed, since $U$ may be $0$, in terms of $1+U$. However to save clutter, such upper bounds will be written in terms of $U$ only. This is justified at the start of section \ref{section:startofproof}.

{\bf Acknowledgement.} We are grateful to the anonymous referee for an exceptionally thorough and helpful review.

\section{Automorphic forms}
\subsection{Symmetric-square $L$-functions}
Let $u_j$ be a Hecke-Maass cusp form for the modular group $SL_2(\mz)$ with Laplace eigenvalue $1/4 + t_j^2$, and $n$-th Hecke eigenvalue $\lambda_j(n)$. It has an associated symmetric-square $L$-function defined by
$L(\mathrm{sym}^2 u_j, s) = \sum_{n \geq 1} \lambda_{\mathrm{sym}^2 u_j}(n) n^{-s}$, with $\lambda_{\mathrm{sym}^2 u_j}(n) = \sum_{a^2 b = n} \lambda_j(b^2)$. 
Let $\Gamma_{\mr}(s) = \pi^{-s/2} \Gamma(s/2)$ and 
$\gamma(\mathrm{sym}^2 u_j, s) = \Gamma_{\mr}(s) \Gamma_{\mr}(s+2it_j) \Gamma_{\mr}(s-2it_j)$.
Then $L(\mathrm{sym}^2 u_j, s)$ has an analytic continuation to $\mc$ and satisfies the functional equation $\gamma(\mathrm{sym}^2 u_j, s) L(\mathrm{sym}^2 u_j, s) = \gamma(\mathrm{sym}^2 u_j, 1-s) L(\mathrm{sym}^2 u_j, 1-s)$, where the notation for $\gamma(f,s)$ agrees with \cite[Chapter 5]{IK}.
In particular, the analytic conductor of $L(\mathrm{sym}^2 u_j, 1/2 + it)$ equals
\begin{equation}
(1+|t|)(1+|t+2t_j|)(1+|2t_j-t|).
\end{equation}

\subsection{The Kuznetsov formula}
Let $h(z)$ be an even, holomorphic function on $|\Im(z)|<\frac12 +\delta$, with decay $|h(z)|\ll (1+|z|)^{2-\delta}$, for some $\delta>0$. Let $\{u_j: j\ge 1\}$ denote an orthonormal basis of Maass cusp forms of level $q$ with Laplace eigenvalue $\frac14+t_j^2$ and Fourier expansion
\[
u_j(z)=y^{\frac12} \sum_{n\neq 0} \rho_j(n) K_{it_j}(2\pi |n| y)e(nx),
\]
where $z=x+iy$ and $K_{it_j}$ is the $K$-Bessel function.
At each inequivalent cusp $\mathfrak{a}$ of $\Gamma_0(q)$, let $E_\mathfrak{a}(\cdot, \frac12+it)$ be the associated Eisenstein series with Fourier expansion
\[
E_\mathfrak{a}(z, \tfrac12+it)=\delta_{\mathfrak{a}=\infty} y^{\frac12+it} + \varphi_{\mathfrak{a}}(\tfrac12+it)y^{\frac12-it}+y^{\frac12} \sum_{n\neq 0} \tau_{\mathfrak{a}}(n,t) K_{it}(2\pi |n| y)e(nx),
\]
where $\varphi_{\mathfrak{a}}(s)$ is meromorphic on $\mc$. These expansions may be found in \cite[(16.19),(16.22)]{IK}.

\begin{mylemma}[Kuznetsov's formula{\cite[Theorem 16.3]{IK}}]
\label{lemma:Kuztrace}
 For any $n,m>0$ we have
\begin{multline*}
\sum_{j\ge 1} \rho_j(n)\overline{\rho}_j(m)\frac{h(t_j)}{\cosh(\pi t_j)}+\sum_{\mathfrak{a}}\frac{1}{4\pi}\int_{-\infty}^{\infty} \tau_\mathfrak{a}(n,t)\overline{\tau}_\mathfrak{a}(m,t) \frac{h(t)dt}{\cosh(\pi t)}\\
=\delta_{(n=m)}\int_{-\infty}^\infty  h(t) t \tanh(\pi t) \frac{dt}{\pi^2} + \frac{i}{\pi} \sum_{c\equiv 0 \bmod q}\frac{S(n,m,c)}{c}  \int_{-\infty}^\infty J\Big(\frac{4\pi\sqrt{nm}}{c},t\Big)h(t) t\tanh(\pi t)dt,
\end{multline*}
where $\displaystyle J(x,t) =\frac{J_{2it}(x) - J_{-2it}(x)}{\sinh(\pi t)}$.

\end{mylemma}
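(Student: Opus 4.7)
The plan is to follow the classical Bruggeman--Kuznetsov approach via Poincaré series, as carried out in detail in \cite[Ch.~16]{IK}. The key intermediate object is, for each integer $m\ge 1$ and complex $s$ with $\Re(s)>1$, the Poincaré series
$$
U_m(z,s) = \sum_{\gamma \in \Gamma_\infty \backslash \Gamma_0(q)} (\Im \gamma z)^s\, e(m\gamma z).
$$
First I would compute its Fourier expansion by a direct unfolding: the $n$-th Fourier coefficient splits into an identity-coset contribution (proportional to $\delta_{m=n}$ times an explicit gamma factor in $s$) plus a sum over $c\equiv 0\bmod q$ of $S(m,n,c)/c$ weighted against a Bessel-type kernel whose parameter encodes $s$. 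This is the arithmetic side of the identity.

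Next I would spectrally expand $U_m$ against the orthonormal basis of cusp forms $\{u_j\}$ together with the Eisenstein continuous spectrum. Parseval's identity expresses the $n$-th Fourier coefficient of $U_m$ as a sum over $j$ of terms involving $\rho_j(m)\overline{\rho}_j(n)$, plus an analogous Eisenstein integral in $\tau_\mathfrak{a}(m,t)\overline{\tau}_\mathfrak{a}(n,t)$, each weighted by the Rankin--Selberg inner products $\langle U_m(\cdot,s), u_j\rangle$ and $\langle U_m(\cdot,s), E_\mathfrak{a}(\cdot,\tfrac12+it)\rangle$. These inner products reduce by another unfolding to explicit products of gamma functions in $s$ and the spectral parameter. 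Equating the arithmetic and spectral expressions for the $n$-th Fourier coefficient yields a version of Kuznetsov's identity with a specific $s$-dependent weight on the spectral side.

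The formula for a general test function $h$ obeying the stated holomorphy and decay conditions is then obtained by superposing these $s$-parameterized identities via an inverse Mellin transform. The weight is chosen so that the spectral factor becomes $h(t)/\cosh(\pi t)$; by standard calculations involving the Mellin transform of the $K$-Bessel function, the arithmetic kernel on the geometric side becomes exactly the integral transform against $J(x,t) = (J_{2it}(x) - J_{-2it}(x))/\sinh(\pi t)$, and the Plancherel density $t\tanh(\pi t)\,dt/\pi^2$ appears naturally on the diagonal term.

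The main obstacle will be justifying the interchanges of summation and the contour shifts needed to extend the identity from the initial region $\Re(s)>1$ of absolute convergence of the Poincaré series to the full class of admissible $h$. The assumed holomorphy of $h$ in the strip $|\Im z| < \tfrac12 + \delta$ together with the decay $|h(z)|\ll (1+|z|)^{2-\delta}$ are precisely what is needed to push the Mellin contour past any potential poles coming from the exceptional spectrum and the gamma factors, and to retain absolute convergence of the Kloosterman-sum side after the Bessel transform has been applied.
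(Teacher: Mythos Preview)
The paper does not give its own proof of this lemma; it is simply quoted from \cite[Theorem 16.3]{IK} as a known result. Your proposal sketches essentially the standard Poincar\'e-series proof found in that reference, which is the appropriate approach here.
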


Later, we will need to use the Kuznetsov formula for level $2^4$. We will choose our orthonormal basis to include the level 1 Hecke-Maass forms, for which we may write 
\[
\rho_j(n)\overline{\rho}_j(m)\frac{h(t_j)}{\cosh(\pi t_j)}=\lambda_j(n)\overline{\lambda}_j(m) \frac{h(t_j) |p_j(1)|^2}{\cosh(\pi t_j)},
\]
and note that $t_j^{-\epsilon}\ll \frac{|\rho_j(1)|^2}{\cosh(\pi t_j)}\ll t_j^\epsilon$ by \cite[(30)]{HM} together with the fact that $L^2$-normalization in $\Gamma_0(2^4)$ and $\Gamma_0(1)$ is the same up to a constant factor.

\section{The quadratic large sieve}
We will have need of Heath-Brown's large sieve inequality for quadratic characters:
\begin{mytheo}[Heath-Brown \cite{HB}]
Let $M, N \gg 1$.  Then
\begin{equation}
\sumstar_{m \leq M}
\Big|
\sumstar_{n \leq N} 
a_n \Big(\frac{n}{m}\Big)\Big|^2 \ll (M+N)(MN)^{\varepsilon} \sum_{n \leq N} |a_n|^2,
\end{equation}
where the sums are restricted to odd square-free integers.
\end{mytheo}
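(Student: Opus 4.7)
The plan is to establish Heath-Brown's quadratic large sieve by combining smoothing, expansion, and quadratic Poisson summation. First, by a standard comparison argument together with dyadic decomposition in $M$, it suffices to bound the smoothed form
\[
T := \sum_m \mu^2(2m)\phi(m/M)\Big|\sumstar_{n \le N} a_n \Big(\frac{n}{m}\Big)\Big|^2,
\]
with $\phi$ a fixed smooth non-negative bump supported in $[1/2,2]$. Expanding the square, and using that both $n_1,n_2$ are squarefree, write $n_1n_2 = dk^2$ with $k = \gcd(n_1,n_2)$ and $d = (n_1/k)(n_2/k)$ squarefree. Then
\[
T = \sum_{n_1,n_2} a_{n_1}\overline{a_{n_2}} \sum_m \mu^2(2m)\phi(m/M)\Big(\frac{d}{m}\Big),
\]
with the inner $m$-sum effectively restricted to $(m,k)=1$. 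The diagonal $n_1=n_2$ (equivalently $d=1$) is bounded trivially by $O(M \sum_n |a_n|^2)$ and accounts for the $M$-term in the final estimate.

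For the off-diagonal $d>1$, the Jacobi symbol $(\frac{d}{\cdot})$ is a non-trivial real primitive character of conductor dividing $4d$, and I would apply Poisson summation modulo $4d$ to the inner $m$-sum. The resulting dual sum has effective length $\sim d/M$ and is governed by the quadratic Gauss sum of $\chi_d$, which has absolute value $d^{1/2}$. Substituting this back, the off-diagonal becomes a new bilinear form in which the dual frequency $h$ plays the role of the modulus and the inner sum involves $(\frac{h}{d})$ by Gauss-sum evaluation, i.e.\ an inverted Jacobi symbol. At this point quadratic reciprocity (tracking the parity correction factors depending on residues modulo $4$) converts the new form into another quadratic large sieve, but with the roles of ``modulus'' and ``coefficient variable'' swapped: the new outer variable has length $\sim N$, so a self-dual closing of the recursion yields the $N$-term in the final bound. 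The squarefree restriction on $m$ can be installed or removed at each stage via Möbius inversion at cost only $(MN)^\epsilon$.

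The main obstacle is executing the off-diagonal cleanly without losing more than $(MN)^\epsilon$. A naive Pólya--Vinogradov bound $|\sum_m \mu^2(2m)\phi(m/M)(\frac{d}{m})| \ll d^{1/2+\epsilon}$ is too weak once summed against $|a_{n_1}\overline{a_{n_2}}|$ over all $d \le N^2$; the essential saving must come from the Gauss-sum side after Poisson, exploiting self-duality via quadratic reciprocity so that the bounds in the $M$-direction and the $N$-direction balance. Technical nuisances that need careful accounting are (i) the parity correction in quadratic reciprocity, which introduces auxiliary sums over residues modulo $4$ and requires splitting $m$ and $n$ into residue classes; (ii) tracking the squarefree sifting through each Möbius inversion without incurring logarithmic losses worse than $(MN)^\epsilon$; and (iii) the small-$d$ range, where the dual sum after Poisson has length comparable to $M$ and the recursion must terminate rather than iterate further.
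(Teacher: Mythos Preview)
The paper does not supply its own proof of this theorem; it is quoted verbatim from Heath-Brown \cite{HB} and used as a black box, so there is nothing in the paper to compare against.

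As for your sketch on its own merits: you have correctly identified the architecture of Heath-Brown's original argument --- smooth the outer sum, expand the square, extract the diagonal for the $M$-term, then Poisson in $m$ modulo the conductor of $\chi_d$, evaluate the Gauss sum, and use quadratic reciprocity to land back in a bilinear form of the same shape with $M$ and $N$ swapped. What you have written, however, is an outline rather than a proof, and the part you flag as ``the main obstacle'' is in fact the entire content of the theorem. Heath-Brown's argument hinges on formalizing the recursion precisely: one shows that if $B(M,N)$ is the optimal constant in the inequality, then Poisson plus reciprocity yields a bound of the shape $B(M,N) \ll_{\varepsilon} (MN)^{\varepsilon}\bigl(M + B(N, K)\bigr)$ for some $K \ll N^2/M$, and then iterates. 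Your write-up asserts that ``a self-dual closing of the recursion yields the $N$-term'' without specifying the recursive inequality, tracking how the parameters shrink, or explaining why the iteration terminates after $O(\log(MN))$ steps with only $(MN)^{\varepsilon}$ loss. The three ``technical nuisances'' you list are real and must be dealt with, but they are bookkeeping; the genuine gap is the missing statement and proof of the recursive lemma that drives the whole argument.
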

We will need a corollary of Heath-Brown's result, namely
\begin{equation}
\sum_{m \leq M} 
|L(1/2 + it, \chi_{m})|^2 \ll (M+ \sqrt{M(1+|t|)})^{} (M(1+|t|))^{\varepsilon},
\end{equation}
This follows from an approximate functional equation, and a simple observation that the square parts of the inner and outer variables are harmless.
Similarly, we obtain
\begin{equation}
\label{eq:HBsecondmomentwithweights}
\sum_{m \leq M} m^{-1/2}
|L(1/2 + it, \chi_{m})|^2 \ll 
\Big(M^{1/2} + (1+|t|)^{1/2}
\Big)
 (M(1+|t|))^{\varepsilon}.
\end{equation}

\section{Oscillatory integrals}
\label{section:oscillatoryintegrals}
Throughout this paper we will make extensive use of estimates for oscillatory integrals.  We will largely rely on the results of \cite{KPY} (built on \cite{BKY}) which uses the language of families of inert functions.  This language gives a concise way to track bounds on derivatives of weight functions.  It also has the pleasant property that, loosely speaking, the class of inert functions is closed under application of the stationary phase method (the precise statement is in Lemma \ref{lemma:exponentialintegralStatPhase} below).  We refer the reader to \cite{KPY} for a more thorough discussion, including examples of applying stationary phase using this language.

Let $\cF$ be an index set and $X=X_T: \cF \to \mr_{\geq 1}$ be a function of $T \in \cF$.
\begin{mydefi}
\label{inert}
A family $\{w_T\}_{T\in \cF}$ of smooth  
functions supported on a product of dyadic intervals in $\mr_{>0}^d$ is called $X$-inert if for each $j=(j_1,\ldots,j_d) \in \mz_{\geq 0}^d$ we have 
\begin{equation}
\label{inertproperty} C_{\mathcal{F}}(j_1,\ldots,j_d) 
:= \sup_{T \in \cF} \sup_{(x_1, \ldots, x_d) \in \mr_{>0}^d} X_T^{-j_1- \cdots -j_d}\left| x_{1}^{j_1} \cdots x_{d}^{j_d} w_T^{(j_1,\ldots,j_d)}(x_1,\ldots,x_d) \right| < \infty.
\end{equation}
\end{mydefi}
As an abuse, we might say that a single function is $1$-inert (or simply inert) by which we should mean that it is a member of a family of $1$-inert functions .

\begin{mylemma}[Integration by parts bound  \cite{BKY}]
\label{lemma:exponentialintegral}
 Suppose that $w = w_T(t)$ is a family of $X$-inert functions, 
 with compact support on $[Z, 2Z]$, so that for all $j=0,1,\dots$ we have the bound
$w^{(j)}(t) \ll (Z/X)^{-j}$.  Also suppose that $\phi$ is smooth and satisfies 
for $j=2,3, \dots$
$\phi^{(j)}(t) \ll \frac{Y}{Z^j}$ for some $R \geq 1$ with $Y/X \geq R$ and all $t$ in the support of $w$.  Let
\begin{equation*}
 I = \intR w(t) e^{i \phi(t)} dt.
\end{equation*}
 If $|\phi'(t)| \gg \frac{Y}{Z}$ for all $t$ in the support of $w$, then $I \ll_A Z R^{-A}$ for $A$ arbitrarily large.
\end{mylemma}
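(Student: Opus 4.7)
The plan is to prove the bound by iterated integration by parts, exploiting the identity $e^{i\phi(t)} = \frac{1}{i\phi'(t)} \frac{d}{dt} e^{i\phi(t)}$ together with the lower bound on $|\phi'|$. First I would introduce the differential operator
\[
\mathcal{L} u \;=\; -\frac{1}{i}\frac{d}{dt}\!\left(\frac{u}{\phi'}\right)
\;=\; -\frac{1}{i}\left(\frac{u'}{\phi'}-\frac{u\,\phi''}{(\phi')^2}\right).
\]
Because $w$ has compact support in $[Z,2Z]$, the boundary terms vanish, and integration by parts gives $\int u\, e^{i\phi}\,dt = \int (\mathcal{L} u)\,e^{i\phi}\,dt$. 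Iterating this $A$ times yields
\[
I \;=\; \int_{Z}^{2Z} (\mathcal{L}^A w)(t)\, e^{i\phi(t)}\,dt,
\]
so the problem reduces to a pointwise bound $|\mathcal{L}^A w(t)| \ll_A R^{-A}$ on $[Z,2Z]$; the conclusion then follows by trivially integrating over an interval of length $Z$.

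The main technical step is to verify by induction on $A$ that $\mathcal{L}^A w$ is a finite $\mz$-linear combination (with $A$-dependent number of terms) of functions of the form
\[
\frac{w^{(j_0)}(t)\,\phi^{(a_1)}(t)\cdots \phi^{(a_k)}(t)}{\phi'(t)^{N}},
\qquad a_1,\ldots,a_k\ge 2.
\]
The inductive step is exactly the formula displayed above: applying $\mathcal{L}$ either differentiates one factor in the numerator (by the Leibniz rule) and increases $N$ by $1$, or appends a new factor $\phi''$ to the numerator and increases $N$ by $2$. The inertness hypothesis and the bounds on $\phi^{(j)}$ and on $1/\phi'$ give
\[
\bigl|w^{(j_0)}(t)\bigr|\ll (X/Z)^{j_0},\qquad
\bigl|\phi^{(a_i)}(t)\bigr|\ll Y/Z^{a_i},\qquad
|\phi'(t)|^{-1}\ll Z/Y,
\]
so each such term is bounded by $X^{j_0} Y^{k-N} Z^{N-j_0-\sum a_i}$.

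A short computation then shows that each application of $\mathcal{L}$ multiplies this bound by a factor of at most $X/Y$: the ``Leibniz'' branch that hits $w^{(j_0)}$ contributes $X/Y$, while the branches that hit a $\phi^{(a_i)}$ or introduce a new $\phi''$ contribute the better factor $1/Y$, and since $X\ge 1$ we have $1/Y \le X/Y$ in either case. Combined with $X/Y \le R^{-1}$, this gives $|\mathcal{L}^A w(t)| \ll_A (X/Y)^A \le R^{-A}$ uniformly in $t$, and therefore $|I|\ll_A ZR^{-A}$ as claimed.

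The only obstacle is bookkeeping: tracking that the combinatorial expansion of $\mathcal{L}^A w$ remains in the stated class and that the ratio $X/Y$ is gained at every step, rather than just on the ``good'' Leibniz branch. This is routine and the (exponentially growing) number of terms is absorbed in the $A$-dependent implicit constant, which is harmless since $A$ is arbitrary but fixed.
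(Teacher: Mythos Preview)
Your argument is correct and is exactly the standard integration-by-parts proof of this lemma. Note that the paper does not supply its own proof here: the lemma is quoted from \cite{BKY} (see also \cite[Lemma~8.1]{BKY} referenced later in the paper), so there is no in-paper proof to compare against. Your write-up matches the approach in that reference.
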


\begin{mylemma}[Stationary phase, \cite{BKY} \cite{KPY}]
\label{lemma:exponentialintegralStatPhase}
 Suppose $w_T$ is $X$-inert in $t_1, \dots t_d$, supported on $t_1 \asymp Z$ and $t_i \asymp X_i$ for $i=2,\dots, d$.  Suppose that on the support of $w_T$, $\phi = \phi_T$ satisfies 
\begin{equation}
\label{eq:phiderivatives}
 \frac{\partial^{a_1 + a_2 + \dots + a_d}}{\partial t_1^{a_1} \dots \partial t_d^{a_d}} \phi(t_1, t_2, \dots, t_d) \ll_{C_\mathcal{F}} \frac{Y}{Z^{a_1}} \frac{1}{X_2^{a_2} \dots X_d^{a_d}},
\end{equation}
for all $a_1, \ldots, a_d \in \mathbb{N}$ with $a_1\ge 1$.
Suppose $\phi''(t_1, t_2, \dots, t_d) \gg \frac{Y}{Z^2}$, (here and below, $\phi'$  and $\phi''$ denote the derivative with respect to $t_1$) for all $t_1, t_2, \dots, t_d$ in the support of $w_T$, and 
for each $t_2, \dots, t_d$ in the support of $\phi$
there exists $t_0 \asymp Z$ such that $\phi'(t_0, t_2, \dots, t_d) = 0$. 
Suppose that $Y/X^2 \geq R$ for some $R\geq 1$.  Then 
\begin{equation}
\label{eq:IasymptoticMainThm}
I = \int_{\mathbb{R}} e^{i \phi(t_1, \dots, t_d)} w_T(t_1, \dots, t_d) dt_1 =  \frac{Z}{\sqrt{Y}} e^{i \phi(t_0, t_2, \dots, t_d)} W_T(t_2, \dots, t_d)
 + O_{A}(ZR^{-A}),
\end{equation}
for some $X$-inert family of functions $W_T$, and where $A>0$ may be taken to be arbitrarily large.
The implied constant in \eqref{eq:IasymptoticMainThm} depends only on $A$ and on $C_{\mathcal{F}}$ defined in \eqref{inertproperty}.
\end{mylemma}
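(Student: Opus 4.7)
My plan is to derive this as an extension of the classical one-variable stationary phase method, treating $t_2,\dots,t_d$ as parameters. The three stages are: (i) localize the integral in $t_1$ near the stationary point $t_0$; (ii) Taylor-expand the phase and evaluate the resulting Gaussian integrals to arbitrary order in $R^{-1}$; and (iii) verify that the resulting amplitude $W_T(t_2,\dots,t_d)$ is $X$-inert in the parameters.

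For stage (i), I fix the parameters $(t_2,\dots,t_d)$ and insert a smooth dyadic partition of unity in $|t_1-t_0|$ adapted to scales between $Z/\sqrt{R}$ and $Z$. On a piece supported where $|t_1-t_0|\asymp Z_1$, the hypothesis $\phi''\gg Y/Z^2$ gives $|\phi'(t_1,\dots)|\gg Y Z_1/Z^2$, and Lemma \ref{lemma:exponentialintegral} yields power savings in $R$ whenever $Z_1 \gg Z/\sqrt{R}$. It therefore suffices to analyze the integral over the window $|t_1-t_0| \ll Z/\sqrt{R}$ up to an error of $O_A(ZR^{-A})$.

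For stage (ii), I change variables $t_1 = t_0 + s$ and Taylor-expand
\[
\phi(t_0+s,\cdot) = \phi(t_0,\cdot) + \tfrac12 \phi''(t_0,\cdot)\, s^2 + \phi_{\geq 3}(s;\cdot),
\]
where $\phi_{\geq 3}(s;\cdot) = O(Y s^3/Z^3)$. Writing $e^{i\phi_{\geq 3}}$ as a truncated power series and integrating term by term, each resulting integral is a polynomial-weighted Gaussian evaluated via
\[
\int e^{i\phi''(t_0)s^2/2}\, s^{n}\, w_T(t_0+s,\cdot)\,ds = \frac{\sqrt{2\pi}}{\sqrt{|\phi''(t_0)|}}\, e^{i\pi\operatorname{sgn}(\phi''(t_0))/4}\, A_n(\cdot) + (\text{tail}),
\]
where $A_n$ is a polynomial in derivatives of $w_T$ at $t_0$ and in powers of $1/\phi''(t_0)$. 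The leading term is of size $Z/\sqrt{Y}$, and each successive correction gains a factor of $Z^2/Y \le R^{-1}$, so truncating at depth $\asymp A$ yields the claimed error.

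The main obstacle is stage (iii). The amplitude has the form
\[
W_T(t_2,\dots,t_d) = \frac{\sqrt{Y}}{Z}\cdot\frac{\sqrt{2\pi}}{\sqrt{|\phi''(t_0)|}}\, e^{i\pi\operatorname{sgn}(\phi''(t_0))/4}\, w_T(t_0,t_2,\dots,t_d) + (\text{lower-order corrections}),
\]
where $t_0 = t_0(t_2,\dots,t_d)$ depends implicitly on the parameters via $\phi'(t_0,\cdot)=0$. Differentiating this relation once gives
\[
\frac{\partial t_0}{\partial t_i} = -\frac{\partial_{t_i}\phi'(t_0,\cdot)}{\phi''(t_0,\cdot)} \ll \frac{Y/(ZX_i)}{Y/Z^2} = \frac{Z}{X_i},
\]
which is exactly the inert scaling for a quantity of size $Z$. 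Higher derivatives of $t_0$ are controlled by repeated implicit differentiation together with a Fa\`a di Bruno chain-rule bookkeeping, using the full set of hypotheses \eqref{eq:phiderivatives}. Combining with the inertness of $w_T$ and of $1/\phi''$ (which inherits inert derivatives from $\phi$) yields the required bounds on $W_T$. This combinatorial bookkeeping, together with the uniform control of the lower-order corrections, is the technical heart of the proof and is carried out in detail in \cite{BKY, KPY}.
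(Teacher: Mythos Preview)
The paper does not give its own proof of this lemma; it is quoted from \cite{BKY} and \cite{KPY}, as indicated in the lemma's attribution. Your three-stage outline (localization near $t_0$ via Lemma~\ref{lemma:exponentialintegral}, Taylor expansion plus Gaussian integration, and implicit differentiation to verify inertness of $W_T$) is a correct sketch of the standard argument carried out in those references, and since you yourself defer the detailed bookkeeping to \cite{BKY,KPY}, your proposal and the paper's treatment coincide.
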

The fact that $W_T$ is inert with respect to the same variables as $w_T$ (with the exception of $t_1$, of course) is highly convenient.  In practice, we may often temporarily suppress certain variables from the notation.  This is justified provided that the functions satisfy the inertness condition in terms of these variables. 
We also remark that if $d=1$, then $W_T(t_{2}, \dots t_d)$ is a constant.

The following remark will be helpful for using Lemma \ref{lemma:exponentialintegralStatPhase} in an iterative fashion. First note that $t_0$ is the unique function of $t_2, \dots, t_d$ which solves $\phi'(t_1, \dots, t_d) = 0$ when viewed as an equation in $t_1$. In other words, $t_0$ is defined implicitly by $\phi'(t_0, \dots, t_d) = 0$.  In practice it might be an unwelcome task to explicitly solve for $t_0$, and the following discussion will aid in avoiding this issue.
Let
\begin{equation}
 \Phi(t_2, \dots, t_d) = \phi(t_0, t_2, \dots, t_d),
\end{equation}
so by the chain rule,
\begin{equation}
 \frac{\partial}{\partial t_j} \Phi(t_2, \dots, t_d) = 
 \phi'(t_0, t_2, \dots, t_d) \frac{\partial t_0}{\partial t_j}
 + \frac{\partial }{\partial t_j} \phi(t_0, \dots, t_j) = 
 \frac{\partial}{\partial t_j} \phi(t_0, \dots, t_j),
 \end{equation}
and so on for higher derivatives.  Hence the derivatives of $\Phi$ have the same bounds as those on $\phi$ (supposing uniformity with respect to the first variable $t_1$).  

As a simple yet useful consequence of this, if $\phi$ satisfies \eqref{eq:phiderivatives} (with $Z$ replaced by $X_1$, say) as well as
$\frac{\partial^2}{\partial t_j^2} \phi(t_1, \dots, t_d) \gg \frac{Y}{X_j^2}  \geq R \geq 1$ for $j=1,2,\dots, k$, uniformly for all $t_1, \dots, t_d$ in the support of $w_T$, then 
\begin{multline}
\int_{\mr^k} e^{i\phi(t_1, \dots, t_d)} w_T(t_1, \dots, t_d) dt_1 \dots dt_k = 
\frac{X_1 \dots X_k}{Y^{k/2}} 
e^{i \phi({\bf v_0} ; t_{k+1}, \dots, t_d)} W_T(t_{k+1}, \dots, t_d)
\\
+ O\Big(\frac{X_1 \dots X_k}{R^A}\Big),
\end{multline}
where ${\bf v_0} \in \mr^k$ is the solution to $\nabla \phi({\bf v_0};t_{k+1}, \dots, t_d) = 0$, where the derivative is with respect to the first $k$ variables only (i.e. the first $k$ entries of $\nabla\phi$ are zero).
Here we have trivially integrated each error term over any remaining variables of integration; the arbitrarily large power of $R$ savings nicely allows for this crude treatment of the error terms.

The following is an archimedean analog of the well-known change of basis formula from additive to multiplicative characters (compare with \cite[(3.11)]{IK})
\begin{mylemma}
\label{lemma:mellinIntegralOfAdditiveCharacter}
Suppose that $w_T$ is $1$-inert, supported on $x \asymp X$ where $X\gg 1$.  Then
\begin{equation}
\label{eq:mellinIntegralOfAdditiveCharacter}
e^{-ix} w_T(x) = X^{-1/2} \int_{-t \asymp X} v(t) x^{it} dt + O(X^{-100}),
\end{equation}
where $v(t) = v_{X}(t)$ is some smooth function satisfying $v(t) \ll 1$.  Moreover, $v(t) = e^{-it \log(|t|/e)} W(t)$ for some $1$-inert function $W$ supported on $-t \asymp X$.
\end{mylemma}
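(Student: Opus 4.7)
The plan is to apply Mellin inversion to $e^{-ix}w_T(x)$ and then evaluate the resulting Mellin transform via stationary phase, using the machinery of Section~\ref{section:oscillatoryintegrals}. Define
\[
\widetilde{g}(t) := \int_0^\infty e^{-iy}\, w_T(y)\, y^{-it-1}\, dy.
\]
Since $e^{-ix}w_T(x)$ is smooth and compactly supported on $x\asymp X$, Mellin inversion gives
\[
e^{-ix} w_T(x) = \frac{1}{2\pi}\int_{\mr} \widetilde{g}(t)\, x^{it}\, dt,
\]
and the change of variables $y=Xu$ produces
\[
\widetilde{g}(t) = X^{-it}\int_0^\infty e^{i\psi(u)}\, \widetilde{w}_T(u)\, du, \qquad \psi(u) := -Xu - t\log u,
\]
where $\widetilde{w}_T(u) := w_T(Xu)\,u^{-1}$ is a $1$-inert family on $u \asymp 1$.

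I would next split the $t$-integral into a non-stationary region $-t\not\asymp X$ and a stationary region $-t\asymp X$, according to the zero of $\psi'(u)=-X-t/u$ on the support $u\asymp 1$. In the non-stationary region $|\psi'(u)|\gg \max(X,|t|)$ while $\psi^{(k)}(u)\ll|t|$ for $k\geq 2$. Lemma~\ref{lemma:exponentialintegral}, applied with support size $Z=1$, inertness parameter $1$, and $Y=\max(X,|t|)$, then yields $\widetilde{g}(t) \ll \max(X,|t|)^{-A}$ for any $A>0$. The contribution of this region to the Mellin inversion integral is therefore $O(X^{-100})$, which legitimizes truncating the outer $t$-integral to $-t \asymp X$.

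In the stationary region the unique stationary point $u_0=-t/X$ satisfies $u_0\asymp 1$, and $\psi''(u_0) = t/u_0^2 \asymp -X$. Applying Lemma~\ref{lemma:exponentialintegralStatPhase} with parameters $Y=X$, $Z=1$, $R=X$ gives
\[
\widetilde{g}(t) = \frac{1}{\sqrt{X}}\, X^{-it}\, e^{i\psi(u_0)}\, W(t) + O(X^{-A}),
\]
for some $1$-inert function $W=W_X$ supported on $-t\asymp X$. A direct computation yields $\psi(u_0) = t - t\log(-t/X)$, and combining this with the prefactor $X^{-it}$ and using $-t=|t|$ produces $X^{-it}e^{i\psi(u_0)} = e^{it-it\log|t|}=e^{-it\log(|t|/e)}$. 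Substituting back into the Mellin inversion identity and setting $v(t) := \tfrac{1}{2\pi}e^{-it\log(|t|/e)}W(t)$ yields the asserted formula, and $|v(t)|\ll 1$ because $W$ is inert.

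The main technical point, if any, is the derivative-by-derivative verification of the hypotheses of the two oscillatory-integral lemmas in each of the two regions; but since all of the required stationary phase infrastructure is already in place in Section~\ref{section:oscillatoryintegrals}, the argument is essentially a bookkeeping exercise rather than a genuine obstacle.
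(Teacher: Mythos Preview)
Your proof is correct and follows essentially the same approach as the paper's own proof: Mellin inversion, then Lemma~\ref{lemma:exponentialintegral} to localize $t$ to $-t\asymp X$, then Lemma~\ref{lemma:exponentialintegralStatPhase} to extract the phase $e^{-it\log(|t|/e)}$. The only cosmetic difference is that you rescale $y=Xu$ before applying stationary phase, whereas the paper applies it directly in the original variable; the computations and conclusions are identical.
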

\begin{proof}
Let $f(x) = e^{-ix} w_T(x)$.  By Mellin inversion,
\begin{equation}
f(x) = \int_{(\sigma)} \frac{\widetilde{f}(-s)}{2\pi i} x^s ds, 
\quad \text{where} \quad
\widetilde{f}(-s) = \int_0^{\infty} e^{-ix} x^{-s} w_T(x) \frac{dx}{x}.
\end{equation}
Take $\sigma = 0$, so $s=it$.
Lemma \ref{lemma:exponentialintegral} implies that $\widetilde{f}(-it)$ is very small outside of the interval $-t \asymp X$.  For $-t \asymp X$, Lemma \ref{lemma:exponentialintegralStatPhase} gives that
\begin{equation}
\widetilde{f}(-it) = X^{-1/2} e^{-it \log(|t|/e)} W(t) + O(X^{-200}),
\end{equation}
where $W$ is a $1$-inert function supported on $-t \asymp X$.
\end{proof}
For later use, we record some simple consequences of the previous lemmas.

\begin{mylemma}
\label{lemma:SomeIntegralWithv}
Let $v(t) = e^{-it \log(|t|/e)} W(t)$ for some $1$-inert function $W$ supported on $-t \asymp X$ with $X\gg 1$. Let $\gamma(s) = \pi^{-s/2} \Gamma(\frac{s+\kappa}{2})$ for $\kappa \in \{0, 1 \}$. Let $D(s) = \sum_{n=1}^{\infty} a_n n^{-s}$ be a Dirichlet series absolutely convergent for $\mathrm{Re}(s) = 0$ with $\max_{t \in \mr} |D(it)| \leq A$ for some $A\ge 0$. Let $c_1,c_2,c_3$ be some real numbers (which may vary with $X$) with $0 \le c_1\ll 1$ and $ |c_2|X^3+|c_3| \ll X^{1-\delta}$ for some $\delta>0$. For any $Y >  0$ we have
\begin{align}
\label{equation:vintbound1} 
X^{-1/2}  \intR  v(t)  e^{-c_1 it\log|t|+c_2it^3} Y^{it} D(it) dt \ll_{v,A} 1
\end{align}
and 
\begin{align}
\label{equation:vintbound2} X^{-1/2}  \intR v(t) e^{-c_1i t\log|t|+c_2it^3} \frac{\gamma(1/2-i(t+c_3))}{\gamma(1/2+i(t+c_3))}  Y^{it} D(it) dt \ll_{v,A} 1.
\end{align}
The bounds depend only on $v$ and $A$. 
\proof
Expanding out the Dirichlet series, and exchanging summation and integration, it suffices to prove the result with $D(s)=1$.  
We first consider \eqref{equation:vintbound1}, which is an oscillatory integral with phase 
$$\phi(t) = -(1+c_1) t \log{|t|} +  t \log(eY) + c_2 t^3.$$
Note that the leading phase points in the direction $-t\log|t|$. 
For $|t| \asymp X$ we have
$\phi'(t) = -(1+c_1) \log{|t|} + \log(Y) -c_1 + O(X^{-\delta})$.  Lemma \ref{lemma:exponentialintegral} shows that the left hand side of \eqref{equation:vintbound1} is very small unless $\log{Y} = (1+c_1) \log{X} + O(1)$, for a sufficiently large implied constant.  On the other hand, if $\log{Y} = (1+c_1) \log{X} + O(1)$, then $\phi'(t) = -(1+c_1) \log(|t|/X) - (1+c_1) \log{X} + \log(Y) +O(1)= O(1)$.  We may then
use Lemma \ref{lemma:exponentialintegralStatPhase} to show the claimed bound \eqref{equation:vintbound1}.

For the second bound \eqref{equation:vintbound2}, we first observe that by Stirling's formula we have we have 
\begin{equation*}
\frac{\gamma(1/2-i(t+c_3))}{\gamma(1/2+i(t+c_3))} = W(t) e^{- i (t+c_3) \log|t+c_3| + cit} + O(X^{-200}),
\end{equation*}
for some $1$-inert function $W$ and some $c \in \mr$.  With the phase of this gamma ratio pointing in the same direction as $-t\log|t|$, we can repeat the same argument as above to show square root cancellation.
\endproof
\end{mylemma}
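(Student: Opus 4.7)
The plan is to handle both integrals by stationary phase analysis via Lemmas \ref{lemma:exponentialintegral} and \ref{lemma:exponentialintegralStatPhase}, after reducing to the case $D(s) = 1$. For the reduction, I would use the absolute convergence of $D(s)$ on $\mathrm{Re}(s) = 0$ to expand $D(it) Y^{it} = \sum_n a_n (Y/n)^{it}$ and interchange summation with integration; since the bound for $D = 1$ will be uniform in $Y$, this yields the general case with implicit constant controlled through $A$.

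For \eqref{equation:vintbound1} with $D = 1$, writing $v(t) = e^{-it\log|t| + it} W(t)$ with $W$ a $1$-inert function supported on $-t \asymp X$, the total phase is $\phi(t) = -(1+c_1)\,t\log|t| + t + c_2 t^3 + t\log Y$. The first step is to verify, using $|c_2|X^3 \ll X^{1-\delta}$, that $|3 c_2 t^2| \ll X^{-\delta}$ and $|6 c_2 t| \ll X^{-2-\delta}$ on the support, so that effectively $\phi'(t) = -(1+c_1)\log|t| + \log Y + O(1)$ and $\phi''(t) \asymp 1/X$. A trichotomy then applies: if $|\log Y - (1+c_1)\log X|$ exceeds a sufficiently large constant, then $|\phi'(t)| \gg 1$ throughout the support, and Lemma \ref{lemma:exponentialintegral} delivers arbitrary polynomial savings. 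Otherwise $\log Y = (1+c_1)\log X + O(1)$; there is a unique stationary point $|t_0| \asymp X$, and Lemma \ref{lemma:exponentialintegralStatPhase} with $Y_\phi = X$ and $Z = X$ produces an integral of size $O(X / \sqrt{X}) = O(X^{1/2})$, matched by the $X^{-1/2}$ prefactor.

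For \eqref{equation:vintbound2}, I would first apply Stirling's formula to the gamma ratio, which has modulus one and phase asymptotically equal to $-2(t + c_3)\log|t + c_3|$ up to slowly varying corrections. Since $|c_3| \ll X^{1-\delta}$, on $-t \asymp X$ we have $|t+c_3| \asymp X$, and Taylor expanding in $c_3$ shows this phase equals $-2t\log|t|$ plus corrections contributing $O(X^{-\delta})$ to the phase derivatives. Combined with the $-t\log|t|$ from $v(t)$, the total leading phase becomes $-(2+c_1)\,t\log|t|$, which still points in the direction of $-t\log|t|$. The same trichotomy and stationary-phase analysis then apply verbatim, with the stationary-phase condition adjusted to $\log Y = (2+c_1)\log X + O(1)$.

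The main technical obstacle is verifying that the perturbations $c_2 t^3$ and $c_3$ remain subdominant. The hypotheses $|c_2| X^3 + |c_3| \ll X^{1-\delta}$ are precisely calibrated so that these contribute $O(X^{-\delta})$ to both $\phi'$ and $\phi''$ on the support, allowing the leading stationary-phase analysis to go through unchanged. A secondary delicate point is uniformity in $D$: because the $D = 1$ bound is independent of $Y$, summing on $n$ (with $\sum_n |a_n|$ controlled by $A$ via absolute convergence) delivers the claimed constant $O_{v,A}(1)$.
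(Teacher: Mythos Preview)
Your proposal is correct and follows essentially the same approach as the paper: reduce to $D=1$ by absolute convergence, identify the phase $\phi(t) = -(1+c_1)t\log|t| + t\log(eY) + c_2 t^3$, apply the integration-by-parts lemma when $\log Y$ is far from $(1+c_1)\log X$, apply stationary phase otherwise, and for \eqref{equation:vintbound2} use Stirling on the gamma ratio to produce an additional phase in the same direction and repeat. One small slip: the Stirling asymptotic for $\gamma(1/2-i(t+c_3))/\gamma(1/2+i(t+c_3))$ gives phase $-(t+c_3)\log|t+c_3|$ (coefficient $1$, not $2$), but your stated total leading phase $-(2+c_1)t\log|t|$ is nonetheless correct, so the argument goes through unchanged.
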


We end this section with some heuristic motivation for the bound in \eqref{equation:vintbound2}, and how it is related to \eqref{equation:Poisson-c-step} from the sketch.  Let $w$ be a fixed inert function, $C \gg 1$ and $P:=A/C \gg 1$.  By Poisson summation, we have
\begin{equation}
\label{eq:throwawaysumcomment}
S := \sum_{c=1}^{\infty} e\Big(-\frac{A}{c}\Big) w(c/C) = \sum_q \intR e\Big(-\frac{A}{t} - qt \Big) w(t/C) dt.
\end{equation}
Integration by parts and stationary phase tells us that the sum is essentially supported on $q \asymp \frac{A}{C^2}$ in which case the integral is bounded by $\frac{C}{\sqrt{P}}$.  An alternative (and admittedly more roundabout!) way to accomplish this same goal is to use Lemma \ref{lemma:mellinIntegralOfAdditiveCharacter} with $x = 2 \pi \frac{A}{c}$, and the functional equation of the Riemann zeta function (shifting contours appropriately).  The dual sum will have a test function of the form on the left hand side of \eqref{equation:vintbound2} (with $c_3=0$ in fact), and the bound in \eqref{equation:vintbound2} is consistent with the simpler Fourier analysis presented in this paragraph above.  The reader may wonder, then, why we have proceeded in this more complicated fashion if the Fourier approach is simpler.  The answer is that the actual sums we encounter in this paper are arithmetically much more intricate than the simplified one presented in \eqref{eq:throwawaysumcomment}.  The Mellin transform approach is better-suited to handling the more complicated arithmetical features that are present in our problem, so on the whole, taking into account both the analytic and arithmetic aspects of the problem, the Mellin transform approach is simpler.

\section{Character sum evaluations}
\label{section:misclemmas}
We need the following elementary character sum calculations.  Define the Gauss sum
\begin{equation}
 \label{gausssumdef} G\Big(\frac{a}{c}\Big) = \sum_{x \shortmod{c}} e_c(ax^2).
\end{equation}
We need to evaluate $G(a/c)$.  It is well known (e.g. see \cite[(3.22), (3.38)]{IK}) that
\begin{equation}
\label{eq:GaussSumQuadraticOddModulusEvaluation}
 G\Big(\frac{a}{c}\Big) = \Big(\frac{a}{c}\Big) \epsilon_c \sqrt{c}, \qquad 
 \epsilon_c = 
 \begin{cases}
  1, \qquad &c \equiv 1 \pmod{4} \\
  i, \qquad &c \equiv 3 \pmod{4},
 \end{cases}
\end{equation}
provided $(2a,c) = 1$.
The case with $c$ even is treated as follows. Let $\delta\in\{0,1\}$ indicate the parity of the highest power of $2$ dividing $c$, as follows: if $2^{v_2}\| c$ then let
\begin{equation}
\label{delta-parity}
\delta \equiv v_2 \pmod 2.
\end{equation}
From the context, this should not be confused with usages where $\delta$ is a small positive constant or the $\delta(P)$ function which equals $1$ when a statement $P$ is true and $0$ otherwise.

\begin{mylemma}
\label{lemma:Gcalc}
Suppose $c=2^k c_o$ with $k \geq 2$, $c_o$ odd, and $\delta$ is as in \eqref{delta-parity}.
Suppose also $(a,c)=1$.  Then 
\begin{equation}
G\Big(\frac{a}{c}\Big) = \epsilon_{c_o} c^{1/2} \Big(\frac{a 2^{\delta}}{c_o}\Big) 
\begin{cases}
 1 + e_4(a c_o), \qquad \delta=0 \\
 2^{1/2} e_8(a c_o), \qquad \delta=1.
\end{cases}
.
\end{equation}
\end{mylemma}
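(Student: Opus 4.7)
The plan is to split the sum via the Chinese Remainder Theorem, evaluate the odd part using \eqref{eq:GaussSumQuadraticOddModulusEvaluation}, and handle the remaining $2$-adic Gauss sum by a dyadic recursion with two explicit base cases.

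Since $\gcd(2^k, c_o) = 1$, the map $(y,z) \mapsto c_o y + 2^k z$ is a bijection $(\mathbb{Z}/2^k\mathbb{Z}) \times (\mathbb{Z}/c_o\mathbb{Z}) \to \mathbb{Z}/c\mathbb{Z}$. Expanding $x^2 = c_o^2 y^2 + 2 \cdot 2^k c_o \, yz + 2^{2k} z^2$ and dividing by $c$, the cross term contributes $e(2ayz) = 1$, yielding the factorization $G(a/c) = G(a c_o / 2^k) \cdot G(a \cdot 2^k / c_o)$. The odd-modulus factor is immediate from \eqref{eq:GaussSumQuadraticOddModulusEvaluation} since $(2a \cdot 2^k, c_o) = 1$, giving $\left(\frac{a 2^k}{c_o}\right) \epsilon_{c_o} \sqrt{c_o}$; and since $\delta \equiv k \pmod{2}$, the identity $\left(\frac{2^k}{c_o}\right) = \left(\frac{2}{c_o}\right)^k = \left(\frac{2^\delta}{c_o}\right)$ simplifies this to $\left(\frac{a 2^\delta}{c_o}\right) \epsilon_{c_o} \sqrt{c_o}$, matching the shape of the claim.

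The main calculation is then the $2$-adic factor $G(b/2^k)$ with $b = a c_o$ odd. I would establish a dyadic recursion as follows. For $k \geq 4$, parametrize $x \bmod 2^k$ as $x = y + 2^{k-2} w$ with $0 \leq y < 2^{k-2}$ and $w \in \{0,1,2,3\}$, and note $x^2 \equiv y^2 + 2^{k-1} y w \pmod{2^k}$ because $2^{2k-4} w^2 \equiv 0 \pmod{2^k}$ when $k \geq 4$. The inner sum $\sum_{w=0}^3 (-1)^{bwy}$ equals $4$ when $y$ is even and $0$ otherwise; after substituting $y = 2y'$ and using $(y'+2^{k-3})^2 \equiv y'^2 \pmod{2^{k-2}}$, one deduces $G(b/2^k) = 2\, G(b/2^{k-2})$ for all $k \geq 4$. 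The two base cases are computed directly: $G(b/4) = 2(1 + e_4(b))$, and $G(b/8) = 4 e_8(b)$ after using $e(b/2) = -1$ for odd $b$.

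Iterating the recursion yields $G(b/2^k) = 2^{k/2}(1 + e_4(b))$ for $k$ even and $G(b/2^k) = 2^{k/2} \sqrt{2}\, e_8(b)$ for $k$ odd. Plugging $b = a c_o$ into this and multiplying by the odd-part evaluation, the powers of $2$ combine as $\epsilon_{c_o}\sqrt{c_o} \cdot 2^{k/2} = \epsilon_{c_o} c^{1/2}$, producing exactly the two stated cases. No step is especially difficult; the only items warranting care are the derivation of the recursion (which genuinely requires $k \geq 4$, which is why both base cases $k=2$ and $k=3$ must be computed separately) and correctly absorbing the factor $\sqrt{2}$ into the $\delta = 1$ branch.
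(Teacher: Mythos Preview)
Your proof is correct and follows the same overall structure as the paper: split $G(a/c)$ multiplicatively via the Chinese Remainder Theorem, evaluate the odd-modulus factor by \eqref{eq:GaussSumQuadraticOddModulusEvaluation}, and compute $G(ac_o/2^k)$ directly. The only difference lies in how the $2$-adic sum is handled. The paper writes $x = u + 2^j v$ with $j = \lceil k/2 \rceil$ and reduces in a single step to a sum modulo $2$ (if $k$ is even) or modulo $4$ (if $k$ is odd), arriving at the same closed forms $2^{k/2}(1+e_4(ac_o))$ and $2^{(k+1)/2} e_8(ac_o)$. Your approach instead establishes the recursion $G(b/2^k) = 2\,G(b/2^{k-2})$ for $k \geq 4$ and iterates down to the base cases $k=2,3$. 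Both computations are elementary and of comparable length; the paper's one-shot reduction is perhaps slightly slicker, while your recursive version makes the dependence on the parity of $k$ transparent from the outset.
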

\begin{proof}
First we note that if $c = c_1 c_2$ with $(c_1, c_2) = 1$, then
\begin{equation}
 G\Big(\frac{a}{c_1 c_2}\Big) = 
 G\Big(\frac{a c_2}{c_1 }\Big)
 G\Big(\frac{a c_1}{c_2 }\Big).
\end{equation}
Suppose that $c= 2^k$ with $k \geq 2$.  Let $j$ be an integer so that $2j \geq k$, and write $x = u + 2^j v$ with $u$ running modulo $2^j$ and $v$ running modulo $2^{k-j}$.  Then
\begin{equation}
 G\Big(\frac{a}{2^k}\Big) = 
 \sum_{u \shortmod{2^j}} e_{2^k}(au^2) 
 \sum_{v \shortmod{2^{k-j}}} 
 e_{2^{k-j-1}}(a uv).
\end{equation}
The inner sum over $v$ vanishes unless $u \equiv 0 \pmod{2^{k-j-1}}$, so we change variables $u = 2^{k-j-1} r$, with $r$ now running modulo $2^{2j-k+1}$.  This gives 
\begin{equation}
 G\Big(\frac{a}{2^k}\Big) = 
  2^{k-j} \sum_{r \shortmod{2^{2j-k+1}}} e_{2^{2j-k+2}}(ar^2).
\end{equation}
In the case that $k$ is even, we make the choice $j=k/2$, giving
\begin{equation}
 G\Big(\frac{a}{2^k}\Big) = 
  2^{k/2} \sum_{r \shortmod{2}} e_4(ar^2) = 2^{k/2}(1 + e_4(a)).
\end{equation}
If $k$ is odd, we take $j = \frac{k+1}{2}$, giving now
\begin{equation}
G\Big(\frac{a}{2^k}\Big) = 
 2^{\frac{k-1}{2}} \sum_{r \shortmod{2^2}} e_{2^3}(ar^2) = 2^{\frac{k+1}{2}} e_{8}(a).
\end{equation}
Assembling the above facts, and using \eqref{eq:GaussSumQuadraticOddModulusEvaluation}, now completes the proof.
\end{proof}

\begin{mylemma}
\label{lemma:CharacterSumArithmeticProgressionEvaluation}
 Let $\chi$ be a Dirichlet character modulo $q$, and suppose $d|q$ and $(a,d) = 1$.  Let
 \begin{equation}
  S_{\chi}(a,d,q) = \sum_{\substack{n \shortmod{q} \\ n \equiv a \shortmod{d}}} \chi(n).
 \end{equation}
Suppose that $\chi$ is induced by the primitive character $\chi^*$ modulo $q^*$, and write $\chi = \chi^* \chi_0$ where $\chi_0$ is trivial modulo $q_0$, with $(q_0, q^*) = 1$.  
Then $S_{\chi}(a,d,q) = 0$ unless $q^*|d$ in which case 
\begin{equation}
\label{char-sum-result} S_{\chi}(a,d,q) = \frac{q}{d} \chi^*(a) \prod_{\substack{p | q_0 \\ p \nmid d}} \Big(1-\frac{1}{p}\Big).
\end{equation}
\end{mylemma}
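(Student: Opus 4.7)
My plan is to factor $S_\chi(a,d,q)$ by the Chinese Remainder Theorem into a sum against the primitive character $\chi^*$ and a counting problem for residues coprime to $q_0$ in an arithmetic progression, then handle each piece by a standard argument. The hypotheses $(q_0,q^*)=1$ and $\chi=\chi^*\chi_0$ mean the natural modulus of $\chi$ is $q^*q_0$, so I proceed as though $q=q^*q_0$ (any larger $q$ is handled trivially since $\chi$ vanishes on the extra primes).

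First, I would write $d=d^*d_0$ with $d^*=\gcd(d,q^*)$ and $d_0=\gcd(d,q_0)$, and decompose $n\bmod q$ via CRT into components $(n_1,n_2)\in \mz/q^*\mz\times \mz/q_0\mz$, so that $n\equiv a\pmod{d}$ becomes $n_1\equiv a_1\pmod{d^*}$ and $n_2\equiv a_2\pmod{d_0}$, where $(a_1,a_2)$ are the CRT components of $a$. Since $\chi^*$ depends only on $n_1$ while $\chi_0(n)$ is the indicator of $(n_2,q_0)=1$, the sum factors as
\[
S_\chi(a,d,q)=\Bigl(\sum_{\substack{n_1\bmod q^*\\ n_1\equiv a_1\,(d^*)}}\chi^*(n_1)\Bigr)\cdot N,
\]
where $N$ is the number of $n_2\bmod q_0$ satisfying $n_2\equiv a_2\pmod{d_0}$ and $(n_2,q_0)=1$.

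Next, I would treat each factor separately. For the $\chi^*$-sum: if $d^*\subsetneq q^*$, primitivity of $\chi^*$ yields an $r$ coprime to $q^*$ with $r\equiv 1\pmod{d^*}$ and $\chi^*(r)\neq 1$ (because otherwise $\chi^*$ would factor through $(\mz/d^*\mz)^*$). The substitution $n_1\mapsto rn_1$ preserves the progression but scales the summand by $\chi^*(r)$, forcing the sum to vanish. When $d^*=q^*$ (equivalently $q^*\mid d$), the progression reduces to the single class $n_1\equiv a\pmod{q^*}$, and the $\chi^*$-sum collapses to $\chi^*(a)$. For $N$, a local analysis prime by prime: for $p\mid d_0$, coprimality $(n_2,p)=1$ follows automatically from $(a_2,d_0)=1$ and $n_2\equiv a_2\pmod{d_0}$, leaving $p^{e_p-f_p}$ choices (where $p^{e_p}\|q_0$, $p^{f_p}\|d_0$); for $p\mid q_0$ with $p\nmid d_0$, only the condition $n_2\not\equiv 0\pmod p$ remains, giving $p^{e_p}(1-1/p)$ choices. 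Multiplying produces $N=(q_0/d_0)\prod_{p\mid q_0,\,p\nmid d_0}(1-1/p)$.

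Finally, in the non-vanishing case $d=q^*d_0$, so $q/d=q_0/d_0$; since any $p\mid q_0$ is automatically coprime to $q^*$, the conditions $p\nmid d_0$ and $p\nmid d$ coincide on such primes. Substituting into the factored form yields exactly \eqref{char-sum-result}. No step presents a real obstacle; the main care is in correctly tracking the CRT decomposition of $a$ and $d$, and in verifying that the translation $n_1\mapsto rn_1$ in the vanishing argument genuinely preserves the arithmetic progression (guaranteed by $r\equiv 1\pmod{d^*}$).
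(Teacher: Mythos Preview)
Your proof is correct and follows essentially the same approach as the paper: a CRT factorization into the $\chi^*$-part and the $\chi_0$-part, with the former handled by the standard primitivity argument (which the paper cites as \cite[(3.9)]{IK}) and the latter by a direct count (the paper uses M\"obius inversion instead of your prime-by-prime count, but the two are equivalent here). One minor remark: the hypothesis $\chi=\chi^*\chi_0$ with $(q_0,q^*)=1$ already forces $q=q^*q_0$, so your parenthetical about ``any larger $q$'' is unnecessary.
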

\begin{proof}
 Suppose $q = q_1 q_2$ with $(q_1, q_2) = 1$ and correspondingly factor $d = d_1 d_2$ and $\chi = \chi_1 \chi_2$ with $\chi_i$ modulo $q_i$.  The Chinese remainder theorem gives $S_{\chi}(a,d,q) = S_{\chi_1}(a, d_1, q_1) S_{\chi_2}(a, d_2, q_2)$.  Writing $d= d^* d_0$ where $d^* | q^*$ and $d_0 | q_0$, we apply this with
  $q_1 = q^*$, $q_2 = q_0$, $\chi_1 = \chi^*$, $\chi_2 = \chi_0$, $d_1 = d^*$, and $d_2 = d_0$.
By the multiplicativity of the right hand side of \eqref{char-sum-result}, it suffices to prove it for $\chi^*$ and $\chi_0$.

By \cite[(3.9)]{IK}, $S_{\chi^*}(a, d^*, q^*) = 0$ unless $q^*|d^*$, in which case it is given by \eqref{char-sum-result}, so this case is done.
 
For the $\chi_0$ part, we simply use M\"obius inversion, giving
\begin{equation}
S_{\chi_0}(a, d_0, q_0) = 
\sum_{\ell | q_0} \mu(\ell)
\sum_{\substack{n \shortmod{q_0/\ell} \\ \ell n \equiv a \shortmod{d_0}}} 1.
\end{equation}
Since $(a,d_0) = 1$ by assumption, this means that we may assume $(\ell, d_0) = 1$, and then $n$ is uniquely determined modulo $d_0$, which divides $q_0/\ell$, giving
\begin{equation}
S_{\chi_0}(a, d_0, q_0) = 
\frac{q_0}{d_0} \sum_{\substack{\ell | q_0 \\ (\ell, d_0) =1}}  \frac{\mu(\ell)}{\ell} 
= \frac{q_0}{d_0}
\prod_{\substack{p | q_0 \\ p \nmid d_0}} \Big(1-\frac{1}{p}\Big). \qedhere
\end{equation}
\end{proof}

For $a,b,c \in \mz$ with $c \geq 1$, define
\begin{equation}
 T(a,b;c) = \sum_{x,y \shortmod{c}} S(x^2, y^2;c) e_c(2xy + ax + b y).
\end{equation}
For $c_o$ odd, write its prime factorization as $c_o = \prod_{p} p^{a_p} \prod_q q^{b_q}$ where each $a_p$ is odd and each $b_q$ is even.  Let $c^* = \prod_p p$ and $c_{\square} = \prod_q q$.  Then $c^*$ is the conductor of the 
Jacobi symbol  $(\tfrac{\cdot}{c_o})$.

\begin{mylemma}
\label{lemma:TabcCalculation}
Let $a, b, c \in \mz$, with $c \geq 1$.  Suppose $c= 2^j c_o$ with $j \geq 4$ and $c_o$ odd, with $\delta$ defined as in \eqref{delta-parity}.
Define $a' = \frac{a}{(a,c)}$, $b' = \frac{b}{(b,c)}$.
Then
$T(a,b;c) = 0$ unless $4|(a,b)$ and $(a,c) = (b,c)$, in which case
\begin{equation}
\label{eq:Tabcformula}
T(a,b;c)  = (a,\frac{c}{2^{2+\delta}}) c^{3/2} e_c(-ab/4)   \Big(\frac{a'b'}{c^*}\Big) g_{\delta}(a',b', c_o)
\delta(c^*|\frac{c_o}{(a,c_o)})
\prod_{\substack{p | c_{\square}, \thinspace p \nmid \frac{c_o}{(a,c_o)}}} (1-p^{-1}),
\end{equation}
where $g_{\delta}$ is some function depending on $a',b',c_o$ modulo $2^{2+\delta}$ that additionally depends on $(\frac{2^j}{(a,2^j)}, 2^{2+\delta})$. In particular, we have that $T(0,b;c) \ll c^{5/2} \delta(c^* = 1) \delta(c|b)$.
\end{mylemma}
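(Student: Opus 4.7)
My plan is to open up the Kloosterman sum in $T(a,b;c)$ and reduce the resulting expression by completion of the square. Writing $S(x^2,y^2;c)=\sum_{d\pmod c}^*e_c(dx^2+\bar d y^2)$ and using the identity
\begin{equation*}
dx^2+\bar d y^2+2xy = d(x+\bar d y)^2,
\end{equation*}
I substitute $u=x+\bar d y$ in the $x$-sum and extract the resulting sum over $y$, which equals $c$ if $bd\equiv a\pmod c$ and vanishes otherwise. This reduces the problem to
\begin{equation*}
T(a,b;c) = c\!\sum_{\substack{d\pmod c,\,(d,c)=1\\ bd\equiv a\pmod c}}\!\!G(d,a;c),\qquad G(d,a;c):=\sum_{u\pmod c}e_c(du^2+au).
\end{equation*}

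\textbf{Necessary divisibility and the clean phase.} A prime-by-prime analysis of $bd\equiv a\pmod c$ with $(d,c)=1$ shows that $v_p(a)=v_p(b)$ for every prime $p\mid c$, so $(a,c)=(b,c)=:\alpha$. Writing $a=\alpha a'$ and $b=\alpha b'$ with $(a'b',c/\alpha)=1$, the congruence pins down $d\equiv a'\bar{b'}\pmod{c/\alpha}$. For the even modulus $c$, the Gauss sum $G(d,a;c)$ vanishes unless $2\mid a$ (same argument as in Lemma~\ref{lemma:Gcalc}), and then completing the square gives $G(d,a;c)=e_c(-\bar d(a/2)^2)G(d/c)$. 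Multiplying the congruence $bd\equiv a\pmod c$ by $a\bar d$ yields $a^2\bar d\equiv ab\pmod c$, so $e_c(-\bar d a^2/4)=e_c(-ab/4)\cdot e(-m/4)$ where $m=(a^2\bar d-ab)/c$; a short computation gives $m=ak$ for an integer $k$, so when $4\mid a$ the correction disappears and the phase $e_c(-ab/4)$ factors cleanly out of the $d$-sum. The case $v_2(a)=1$ will be dealt with separately in the last paragraph and gives $T=0$.

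\textbf{CRT split and assembly.} Writing $c=2^jc_o$, Lemma~\ref{lemma:Gcalc} factors $G(d/c)$ into an odd contribution $\epsilon_{c_o}c_o^{1/2}(d\cdot2^\delta/c_o)$ and a 2-adic factor $2^{j/2}$ times $(1+e_4(dc_o))$ or $2^{1/2}e_8(dc_o)$ according to $\delta$. The $d$-sum over lifts of $d\equiv a'\bar{b'}\pmod{c/\alpha}$ in $(\mathbb Z/c)^*$ now splits by CRT. The odd-modulus part is the character sum $\sum(d/c_o)$ over lifts of a given residue class modulo $c_o/(a,c_o)$, which is handled by Lemma~\ref{lemma:CharacterSumArithmeticProgressionEvaluation}: since $(\cdot/c_o)$ is induced by the primitive character $(\cdot/c^*)$, the sum vanishes unless $c^*\mid c_o/(a,c_o)$, and otherwise equals $(a,c_o)\cdot(a'b'/c^*)\prod_{p\mid c_\square,\,p\nmid c_o/(a,c_o)}(1-1/p)$. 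The 2-adic part is analyzed analogously: summing the 2-adic Gauss factor over lifts of $d\pmod{2^j}$ compatible with the fixed class modulo $2^{j-v_2(\alpha)}$ produces a multiplicity factor $(a,2^{j-2-\delta})$ times a residual factor $g_\delta(a',b',c_o)$ depending only on $a',b',c_o\pmod{2^{2+\delta}}$ (and on $(2^j/(a,2^j),2^{2+\delta})$, as claimed). Combining the odd and 2-adic multiplicities gives $(a,c_o)\cdot(a,2^{j-2-\delta})=(a,c/2^{2+\delta})$, and assembling with the overall $c\cdot c_o^{1/2}\cdot 2^{j/2}=c^{3/2}$ (and the phase $e_c(-ab/4)$) yields the claimed formula.

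\textbf{The $v_2(a)=1$ vanishing, the $T(0,b;c)$ bound, and the main obstacle.} If $v_2(a)=1$, write $a=2m$ with $m$ odd; then $(a/2)^2=m^2$ is odd and $e_{2^j}(-\bar d m^2)$ takes opposite values at the two lifts $\bar d$ and $\bar d+2^{j-1}$ (since $e_{2^j}(-2^{j-1}m^2)=-1$), while the 2-adic Gauss factor is unchanged under a shift of $\bar d$ by $2^{j-1}$ because $j\geq 4$ implies $2^{j-1}\geq 2^{2+\delta}$. Hence the lifts cancel in pairs, forcing $T=0$; thus $4\mid a$ (and by symmetry $4\mid b$) is necessary. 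The bound $T(0,b;c)\ll c^{5/2}\delta(c^*=1)\delta(c\mid b)$ is the special case $a=0$: the condition $(a,c)=(b,c)$ forces $c\mid b$, and then $T(0,0;c)=c\sum_{d\pmod c}^*G(d/c)$ is bounded via Lemma~\ref{lemma:Gcalc} by $c\cdot\phi(c)\cdot c^{1/2}\ll c^{5/2}$, with nonvanishing requiring that the character $(\cdot/c_o)$ be trivial, i.e., $c^*=1$. The main obstacle will be the 2-adic bookkeeping in the third paragraph: correctly enumerating the lifts of $d\pmod{2^j}$ and tracking how the split forms of the 2-adic Gauss factor (depending on the parity $\delta$ of $j$) combine with the residual phase correction after pulling out $e_c(-ab/4)$, to verify the exact shape of $g_\delta(a',b',c_o)$ and its stated dependence on $(2^j/(a,2^j),2^{2+\delta})$.
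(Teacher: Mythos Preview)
Your proposal is correct and follows essentially the same route as the paper: open the Kloosterman sum, complete the square to separate the $y$-sum (forcing $bd\equiv a\pmod c$ and hence $(a,c)=(b,c)$), reduce to a Gauss sum times the phase $e_c(-\bar d a^2/4)$, evaluate the Gauss sum via Lemma~\ref{lemma:Gcalc}, use Lemma~\ref{lemma:CharacterSumArithmeticProgressionEvaluation} for the odd-part character sum over lifts of $d$, and do the $2$-adic bookkeeping to produce $(a,2^{j-2-\delta})$ and the residual $g_\delta$.

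The one noteworthy difference is that you complete the square directly in $G(d,a;c)=\sum_u e_c(du^2+au)$ (valid once $2\mid a$, since $\bar d(a/2)$ is well-defined modulo $c$), obtaining $G(d,a;c)=e_c(-\bar d(a/2)^2)G(d/c)$ and then invoking Lemma~\ref{lemma:Gcalc} on the full modulus $c$. The paper instead writes $c=c_1c_2$ with $c\mid c_1^2$, $c_2\mid c_1$, $c_1/c_2$ squarefree, and reduces to a Gauss sum of modulus $4c_1/c_2$; the two routes give the same answer (one can check $G(d/c)=\tfrac{c_2}{2}G(d/(4c_1/c_2))$), and yours is arguably cleaner here. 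Your treatment of the $v_2(a)=1$ vanishing (pairing $\bar d$ with $\bar d+c/2$) and of the phase simplification $e_c(-\bar d a^2/4)=e_c(-ab/4)$ when $4\mid a$ also matches the paper's, though in the vanishing argument you should write the phase as $e_c(-\bar d m^2)$ rather than $e_{2^j}(-\bar d m^2)$; the sign flip comes from $e_c(-(c/2)m^2)=e(-m^2/2)=-1$, and the invariance of the Gauss factor under $\bar d\to\bar d+c/2$ follows (as you note) from $j\ge 4$.
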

\begin{proof}
We have
\begin{equation}
 T(a,b;c) = 
 \sumstar_{t \shortmod{c}}
 \sum_{x,y \shortmod{c}}
  e_c(t(x + \overline{t} y)^2 + ax + by).
\end{equation}
Changing variables $x \rightarrow x- \overline{t} y$ and evaluating the resulting $y$-sum by orthogonality, we deduce
\begin{equation}
 T(a,b;c) = c
 \sumstar_{\substack{t \shortmod{c} \\ bt \equiv a \shortmod{c}}}
 \sum_{x \shortmod{c}}
  e_c(tx^2 + ax).
\end{equation}
The congruence in the sum implies that $T(a,b;c) = 0$ unless $(a,c) = (b,c)$, a condition that we henceforth assume.  Changing variables $x \rightarrow x + c/2$ also shows that $T(a,b;c) = 0$ unless $2|a$, so we assume this condition also.  

Write $c$ uniquely as $c = c_1 c_2$ where $c|c_1^2$, $c_2 | c_1$ and $c_1/c_2$ is square-free
(another way to see this factorization is by writing $c$ uniquely as $AB^2$ with $A$ square-free; then $c_1 = AB$ and $c_2 = B$).
Observe that $2^2|c_2$ from $2^4|c$.  Let $x = x_1 + c_1 x_2$, and let $Q(x) = tx^2 + ax$.  Note that 
\begin{equation}
Q(x_1 + c_1 x_2) = Q(x_1) + Q'(x_1) c_1 x_2 + \tfrac{Q''(x_1)}{2} c_1^2 x_2^2 \equiv Q(x_1) + Q'(x_1) c_1 x_2 \pmod{c}.
\end{equation}
Thus
\begin{equation}
 \sum_{x \shortmod{c}} e_c(Q(x)) = 
 \sum_{x_1 \shortmod{c_1}} e_c(Q(x_1))
 \sum_{x_2 \shortmod{c_2}} e_{c_2}(Q'(x_1) x_2)
 =
 c_2 \sum_{\substack{x_1 \shortmod{c_1} \\ Q'(x_1) \equiv 0 \shortmod{c_2}}} e_c(Q(x_1)).
\end{equation}
In our case, $Q'(x_1) = 2tx_1 + a$, so the congruence means $2x_1 \equiv - \overline{t} a \pmod{c_2}$.  Since $2|a$ and $2|c_2$, this is equivalent to $x_1 \equiv - \overline{t} \frac{a}{2} \pmod{c_2/2}$.  Writing $x_1 = - \overline{t} \frac{a}{2} + \frac{c_2}{2} v$, with $v$ running modulo $2 \frac{c_1}{c_2}$, we obtain
\begin{equation}
 \sum_{x \shortmod{c}} e_c(Q(x))
 = c_2 e_c(- \overline{t} a^2/4)
 \sum_{v \shortmod{2 \frac{c_1}{c_2}}} 
 e\Big(\frac{t v^2}{4 c_1/c_2}\Big).
\end{equation}
While the exponential in the inner sum has modulus $4c_1/c_2$, the sum is only over $0 \le v \le 2(c_1/c_2) -1$.  However, observe that the exponential has the same values at $1\le -v \le 2(c_1/c_2)$, so that the inner sum above is half of a Gauss sum. Thus 
\begin{equation}
 T(a,b;c) = c \frac{c_2}{2} 
 \sumstar_{\substack{t \shortmod{c} \\ bt \equiv a \shortmod{c}}}
e_c(- \overline{t} a^2/4)
 G\Big(\frac{t}{4 c_1/c_2}\Big).
 \end{equation}
By Lemma \ref{lemma:Gcalc}, we deduce
\begin{equation}
\label{eq:TabcFormula}
 T(a,b;c) = c^{3/2} \epsilon_{c_o}
 \sumstar_{\substack{t \shortmod{c} \\ bt \equiv a \shortmod{c}}}
e_c(- \overline{t} a^2/4)
 \Big(\frac{t 2^{\delta}}{c_{o}}\Big)
\begin{cases}
 1 + e_4(t c_o), \qquad \delta=0 \\
 2^{1/2} e_8(t c_o), \qquad \delta=1.
\end{cases}
\end{equation}
This formulation contains a few additional observations. We have used that the Jacobi symbol $(\frac{t}{(c_1/c_2)_o})$ agrees with $(\frac{t}{c_o})$ for $t$ coprime to $c$, where $n_o$ is the odd part of an integer $n$. We have also used that $(c_1/c_2)_o$ and $c_o$ have the same values modulo $8$. Thus we can replace $\epsilon_{(c_1/c_2)_o}, e_4(t (c_1/c_2)_o)$, and $e_8(t (c_1/c_2)_o)$ with $\epsilon_{c_o}, e_4(t c_o)$, and $e_8(t c_o)$ respectively. These observations can easily be checked by using multiplicativity to reduce to the case when $c$ is a power of an odd prime. If $c=p^l$, then $c_1/c_2=1$ when $l$ is even, and $c_1/c_2=p$ when $l$ is odd.

Next we turn to the $t$-sum in \eqref{eq:TabcFormula}.  Suppose first that $2||a$. Let $a' = \frac{a}{(a,c)}$, $b' = \frac{b}{(a,c)}$. The congruence $bt \equiv a \pmod{c}$ uniquely determines $t$ modulo $c/(a,c)$, since it is equivalent to $\overline{t} \equiv b'\overline{a'} \pmod{c/(a,c)}$. Now in the $t$-sum, one can pair up $\overline{t}$ with $\overline{t}+c/2$ and observe that the corresponding values of the exponential $e_c(- \overline{t} a^2/4)$ will cancel out since $e_c(- (c/2) a^2/4)=-1$. Also, the values of $(\frac{t}{c_o})=(\frac{\overline{t}}{c_o})$, $e_4(t c_o)=e_4(\overline{t} c_o)$, and $e_8(t c_o)=e_8(\overline{t} c_o)$ remain the same under $\overline{t}\to \overline{t}+c/2$, since by assumption $2^4|c$. Therefore, $T(a,b,c)$ vanishes unless $4|a$ (and hence $4|b$), which we now assume to be the case. This allows the convenient simplification $e_c(-\overline{t} a^2/4) = e_c(-ab/4)$. 

 Breaking up the $t$-sum into congruence classes modulo $2^{2+\delta}$, to uniquely determine $e_{2^{2+\delta}}(t c_o)$, we obtain
\begin{equation}
 T(a,b;c) = c^{3/2} \epsilon_{c_o}
 e_c(- ab/4)
 \sumstar_{v \shortmod{2^{2+\delta}}}
 \begin{bothcases}
 1 + e_4(v c_o) \\
 2^{1/2} e_8(v c_o)
\end{bothcases} 
 \sumstar_{\substack{t \shortmod{c} \\ t \equiv \overline{b'} a' \shortmod{\frac{c}{(a,c)}}
 \\ t \equiv v \shortmod{2^{2+\delta}}
 }}
 \Big(\frac{t 2^{\delta}}{c_{o}}\Big).
\end{equation}
For the congruence $t \equiv \overline{b'}a' \pmod{\frac{c}{(a,c)}}$  to be consistent with $t \equiv v \pmod{2^{2+\delta}}$, it is necessary and sufficient that $v \equiv \overline{b'}a' \pmod{(\frac{c}{(a,c)}, 2^{2+\delta})}$.

Recall that $c = 2^j c_o$, where $j \geq 4$. 
Factoring the moduli in the sum, we have
\begin{equation}
 \sumstar_{\substack{t \shortmod{c} \\  t \equiv \overline{b'} a' \shortmod{\frac{c}{(a,c)}}
 \\ t \equiv v \shortmod{2^{2+\delta}}
 }}
 \Big(\frac{t}{c_{o}}\Big)
 =
\Big( \sumstar_{\substack{t \shortmod{c_o} \\ t \equiv \overline{b'} a' \shortmod{\frac{c_o}{(a,c_o)}}}}
 \Big(\frac{t}{c_{o}}\Big)
 \Big)
 \Big(\sumstar_{\substack{t \shortmod{2^j} \\ t \equiv \overline{b'} a' \shortmod{\frac{2^j}{(a,2^j)}}
 \\ t \equiv v \shortmod{2^{2+\delta}}}} 1 \Big).
\end{equation}
The sum modulo $2^j$ above equals, by the Chinese Remainder Theorem and the fact that the condition $(t,2)=1$ is automatic because $(v,2)=1$,
$$
\frac{2^j}{[\frac{2^j}{(a,2^j)}, 2^{2+\delta}]} 
= 
\frac{2^j (\frac{2^j}{(a,2^j)}, 2^{2+\delta})}{\frac{2^j}{(a,2^j)} 2^{2+\delta}}
= 
(a,2^{j-2-\delta}),
$$
provided of course that $v \equiv \overline{b'} a' \pmod{(\frac{2^j}{(a,2^j)}, 2^{2+\delta})}$. 
Therefore, we have that $T(a,b;c)$ equals
\begin{equation}
\label{eq:TsumEvaluationTowardsTheEnd}
c^{3/2} \epsilon_{c_o}
 e_c(- ab/4) (a, 2^{j-2-\delta})
\sumstar_{\substack{v \shortmod{2^{2+\delta}} \\ v \equiv \overline{b'} a' \shortmod{(\frac{2^j}{(a,2^j)}, 2^{2+\delta})}}}
 \begin{bothcases}
 1 + e_4(v c_o) \\
 2^{1/2} e_8(v c_o)
\end{bothcases}
 \sumstar_{\substack{t \shortmod{c_o} \\ t \equiv \overline{b'}a' \shortmod{\frac{c_o}{(a,c_o)}}
 }}
 \Big(\frac{t 2^{\delta}}{c_{o}}\Big).
\end{equation}
By Lemma \ref{lemma:CharacterSumArithmeticProgressionEvaluation} with $q= c_o$, $d = \frac{c_o}{(a,c_o)}$, $a = \overline{b'} a'$, $q^* = c^*$, and $q_0 = c_{\square}$, we have 
\begin{equation}
\label{eq:tsumevaluation}
\sumstar_{\substack{t \shortmod{c_o} \\ t \equiv \overline{b'}a' \shortmod{\frac{c_o}{(a,c_o)}}
 }}
 \Big(\frac{t}{c_{o}}\Big)
=  (a,c_o) \Big(\frac{a' b'}{c^*}\Big) 
\delta(c^*|\frac{c_o}{(a,c_o)})
\prod_{\substack{p | c_{\square} \\ p \nmid \frac{c_o}{(a,c_o)}}} (1-p^{-1})
 .
\end{equation}
Inserting \eqref{eq:tsumevaluation} into \eqref{eq:TsumEvaluationTowardsTheEnd} and simplifying a bit using $(a, c_o)(a, 2^{j-2-\delta}) = (a, \frac{c}{2^{2+\delta}})$, we deduce that
 $T(a,b;c)$ equals
\begin{equation}
 c^{3/2} \epsilon_{c_o}
 e_c(- ab/4) (a,\tfrac{c}{2^{2+\delta}})
 \Big(\frac{a' b'  2^{\delta}}{c^*}\Big)
 \prod_{\substack{p|c_{\square} \\ p \nmid \frac{c_o}{(a,c_o)}}} (1-p^{-1})
 \sumstar_{\substack{v \shortmod{2^{2+\delta}} \\ v \equiv \overline{b'} a' \shortmod{(\frac{2^j}{(a,2^j)}, 2^{2+\delta})}}}
 \begin{cases}
 1 + e_4(v c_o) \\
 2^{1/2} e_8(v c_o),
\end{cases}
\end{equation}
times the delta function that $c^*$ divides $\frac{c_o}{(a,c_o)}$.
The inner sum over $v$ is a function of $a', b', c_o$ modulo $2^{2+\delta}$ that additionally depends 
on $(\frac{2^j}{(a,2^j)}, 2^{2+\delta})$. In addition, $(\frac{2^{\delta}}{c^*})$ is a function of $c_o$ modulo $2^{2+\delta}$.
\end{proof}

\section{Start of proof}\label{section:startofproof}
Let $0\le U \le (2-\delta)T$. By an approximate functional equation, dyadic decomposition of unity, and Cauchy's inequality, we have
\begin{equation}
\label{eq:momentFirstFormula}
\mathcal{M}:= \sum_{T < t_j <T + \Delta} 
 |L(\mathrm{sym}^2 u_j, 1/2+iU)|^2
 \ll
 \max_{1 \ll N \ll N_{\text{max}}} \frac{ T^{\varepsilon}}{N}
 \sum_{T < t_j <T + \Delta} 
 \Big|
 \sum_{n} \frac{\lambda_j(n^2)}{n^{iU}} w_N(n) \Big|^2,
\end{equation}
where $w_N(x)$ is supported on $x \asymp N$ and satisfies $w_N^{(j)}(x) \ll_j  N^{-j}$ 
and $N_{\text{max}} = (U+1)^{1/2} T^{1+\varepsilon}$.  
To save some clutter in the notation, we want to simply write $U$ instead of $U+1$ in all estimates involving $U$. The reader may accept this as a convention or, when $0\le U\le 1$, we can write $n^{-iU} w_N(n) = n^{-i(U+1)} n^{i} w_N(n)$ and absorb $n^i$ into $w_N(n)$ by redefining the weight function. Thus we can henceforth assume that $U\ge 1$.

Next we insert a weight
\begin{equation}
 \label{eq:h-definition} h(t) =
\frac{t^2 + \frac14}{T^2}
\Big[  
\exp\Big(-\frac{(t-T)^2}{\Delta^2} \Big) + \exp\Big(-\frac{(t+T)^2}{\Delta^2} \Big)
\Big],
\end{equation}
write $\lambda_j(n^2)=\rho_j(n^2)/\rho_j(1)$ and over-extend (by positivity) the spectral sum to  an orthonormal basis of all cusp forms of level $2^4$, embedding the level $1$ forms. This  embedding trick, introduced for the purpose of simplifying the $2$-part of the exponential sum in Lemma \ref{lemma:TabcCalculation}, is motivated from \cite[p.4]{Bl2}. We also form the obvious Eisenstein series variant on the sum.  This leads to 
the inequality (see the remarks following Lemma \ref{lemma:Kuztrace})
\begin{multline}
 \mathcal{M} \ll  \max_{1 \ll N \ll N_{\text{max}}}
 \frac{T^{\varepsilon}}{N}
 \Big(
 \sum_{u_j \text{ level $2^4$}}  \frac{h(t_j)}{\cosh(\pi t_j)}
 \Big|
 \sum_{n} \frac{\rho_j(n^2)}{n^{iU}} w_N(n) \Big|^2
\\
 + \sum_{\mathfrak{a}}  \frac{1}{4 \pi} \intR  \frac{h(t)}{\cosh(\pi t)}
 \Big|
 \sum_{n} \frac{\tau_{\mathfrak{a}, it}(n^2)}{n^{iU}} w_N(n) \Big|^2
 dt
 \Big).
\end{multline}
Opening the square and applying the Kuznetsov formula, we obtain
\begin{equation}\label{after-kuznetsov}
 \mathcal{M} \ll \Delta T^{1+\varepsilon} +  \max_{1 \ll N \ll N_{\text{max}}} T^{\varepsilon} |\mathcal{S}(H)|,
\end{equation}
where 
\begin{equation}
 \mathcal{S}(H) = 
 \frac{1}{N} \sum_{c \equiv 0 \shortmod{2^4}}
 \sum_{m,n} \frac{S(m^2, n^2;c)}{c m^{iU} n^{-iU}} 
 w_N(m) w_N(n)
 H\Big(\frac{4 \pi mn}{c}\Big),
\end{equation}
\begin{equation}
\label{Hdef} H(x) = i \intR J(x,t) t \tanh(\pi t) h(t) dt,
\end{equation}  and $J(x,t)$ is as defined in Lemma \ref{lemma:Kuztrace}.

By \cite[(3.10)]{JM} we get that $H(x) \ll \frac{\Delta}{T} x^{2}$ for $x\le 1$. Using this with $x=4\pi mn/c$, we can truncate $c$ at some large power of $T$, say $c \le T^{100}$, with an acceptable error term.

Using \cite[8.411 11]{GR} and the fact that the integrand in \eqref{Hdef} is an even function of $t$, one can derive as in \cite[(3.13)]{JM} that $H(x) = \frac{2}{\pi } \mathrm{Re}(H_{0}(x))$, where
\begin{equation}
 H_0(x)  = \intR e^{ix \cosh{v}}\intR e^{-2ivt} t \tanh(\pi t) h(t) dt dv.
\end{equation}
The inner $t$-integral above is 
\begin{multline}
 \intR e^{-2ivt} t \tanh(\pi t) \frac{t^2 + \frac14}{T^2} \Big(\exp\Big(-\frac{(t-T)^2}{\Delta^2}\Big)+\exp\Big(-\frac{(t+T)^2}{\Delta^2} \Big)\Big) dt
\\
 = \Delta T( e^{-2ivT}+e^{2ivT}) g(\Delta v),
\end{multline}
where $g(y) = g_{\Delta, T}(y)$ behaves like a fixed (even) Schwartz-class function; namely it satisfies the derivative bounds $g^{(j)}(y) \ll_{j,A} (1+|y|)^{-A}$, for any $j, A \in \mathbb{Z}_{\geq 0}$.  Hence
\begin{equation}
\label{eq:H0formula}
 H_{0}(x)   = 2 \Delta T \intR e^{ix \cosh{v}} e^{-2ivT} g(\Delta v) dv.
\end{equation}
From this, we can write the real part of $H_0(x)$ as a linear combination of $H_{\pm}(x)$, where
\begin{equation}
 H_{\pm}(x) = \Delta T \intR e^{\pm i x \cosh{v} - 2ivT} g(\Delta v) dv
  = \Delta T e^{\pm ix} \intR e^{\pm ix (\cosh{v} - 1) - 2ivT} g(\Delta v) dv.
\end{equation}
 Then \eqref{after-kuznetsov} becomes
\begin{equation}\label{after-kuznetsov-2}
 \mathcal{M} \ll \Delta T^{1+\varepsilon} +  \max_{\substack{1 \ll N \ll N_{\text{max}}\\ \pm }} T^{\varepsilon} |\mathcal{S}(H_\pm)|.
\end{equation}
It suffices to bound $\mathcal{S}(H_{+})$, as the argument for $\mathcal{S}(H_{-})$ is similar.
For convenience, let us write this as $H_{+}(x) = \Delta T e^{ix} K_{+}(x)$, where
\begin{equation}
\label{eq:K+formula}
 K_{+}(x) =   \intR e^{ix (\cosh{v} - 1) - 2ivT} g(\Delta v) dv.
\end{equation}

Finally, we apply a dyadic partition of unity to the $c$-sum.
To summarize, we have shown
\begin{equation}
\label{eq:SH+formula}
 \mathcal{S}(H_{+}) = \frac{\Delta T}{N} 
 \sum_{C } 
 \sum_{c \equiv 0 \shortmod{2^4}}
 \sum_{m,n} \frac{S(m^2, n^2;c) e_c(2mn)}{c m^{iU} n^{-iU}} 
 w(m,n,c)
 K_{+}\Big(\frac{4 \pi mn}{c}\Big) + O(T^{-100}),
\end{equation}
where the first sum is a sum over integers $C$ equal to $2^{j/2}$ for $0\le j\leq 300 \log T$ and $w(x_1,x_2,x_3)=w_{N,C}(x_1,x_2,x_3)$ is 1-inert and supported on  $x_1\asymp x_2\asymp N$ and $c\asymp C$.

We may approximate $H_{+}(x)$ quite well by truncating the integral at $|v| \leq \Delta^{-1} T^{\varepsilon}$, and then use an integration by parts argument to see that $H_{+}(x)$ is very small unless
\begin{equation}
 x \gg \Delta T^{1-\varepsilon}.
\end{equation}
For more details of an alternative approach, one may see \cite[pp.76-77]{JM}.
In our situation where $x \asymp \frac{N^2}{C}$, we conclude that we may assume
\begin{equation}
\label{eq:cUpperBound}
C \ll T^{\varepsilon} \frac{N^2}{\Delta T^{}} \ll T^{\varepsilon} \frac{UT}{\Delta}.
\end{equation}
For our purposes it is inconvenient to develop the $v$-integral further at this early stage.
However, we do record the following slight refinement that is useful for large values of $x$.
\begin{mylemma}
\label{lemma:K+cutoff}
Suppose that
\begin{equation}
x \gg T^{2-\varepsilon}.
\end{equation}
Then
\begin{equation}
\label{eq:K+cutoff}
K_{+}(x) = \int_{|v| \ll  x^{-1/2} T^{\varepsilon}} 
e^{ix(\cosh(v) - 1) - 2iTv} g(\Delta v) \eta(v) dv + O((xT)^{-100}),
\end{equation}
where $\eta$ is supported on $|v| \ll x^{-1/2}T^{\varepsilon}$ and satisfies property \eqref{inertproperty} for a $1$-inert function.
\end{mylemma}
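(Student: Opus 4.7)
The plan is to show that, under the hypothesis $x \gg T^{2-\varepsilon}$, the contribution to $K_{+}(x)$ from $|v| \gg x^{-1/2} T^{\varepsilon}$ is negligible. Set $V_0 := x^{-1/2} T^{\varepsilon}$ and introduce a smooth partition of unity $1 = \eta(v) + (1-\eta(v))$ where $\eta(v) = \eta_0(v/V_0)$ for a fixed even smooth bump $\eta_0$ with $\eta_0 \equiv 1$ on $[-1,1]$ and vanishing outside $[-2,2]$; then $\eta$ is the desired $1$-inert cutoff on $|v| \ll V_0$. Inserting this into \eqref{eq:K+formula}, the $\eta$-part yields the main term on the right-hand side of \eqref{eq:K+cutoff}, so the task reduces to bounding
\[
\mathcal{E} := \intR e^{i \phi(v)} g(\Delta v) (1-\eta(v)) \, dv, \qquad \phi(v) := x(\cosh v - 1) - 2Tv.
\]
Since $g$ is Schwartz and $\Delta \geq T^{\varepsilon}$ by our standing hypothesis, the contribution to $\mathcal{E}$ from $|v| \geq \Delta^{-1} T^{\varepsilon}$ is $O((xT)^{-100})$ trivially; moreover $\Delta^{-1} T^{\varepsilon} \leq 1$, so it suffices to treat $V_0 \ll |v| \ll 1$.

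For this remaining range, I would apply Lemma \ref{lemma:exponentialintegral} after a dyadic decomposition $|v| \asymp V$ with $V_0 \ll V \leq 1$. The weight $g(\Delta v) \psi_V(v)$ is $V$-inert in $v$, since $\Delta \leq V^{-1}$. The derivatives of $\phi$ satisfy $|\phi^{(j)}(v)| \ll x$ for every $j \geq 2$ uniformly on $|v| \leq 1$, because $\cosh$ and $\sinh$ are bounded there. With the choices $Z = X = V$ and $Y = xV^2$, the bounds $|\phi^{(j)}(v)| \ll x \leq Y/V^j$ are then automatic for $j \geq 2$ (using $V \leq 1$). The key point is the first-derivative lower bound $|\phi'(v)| \gg xV = Y/Z$: for $v \asymp V$ one has $|x \sinh v| \asymp xV \geq xV_0 = x^{1/2} T^{\varepsilon}$, while the hypothesis $x \gg T^{2-\varepsilon}$ gives $xV \gg T^{1+\varepsilon/2}$, comfortably dominating the linear term $2T$. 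This is precisely where the assumption $x \gg T^{2-\varepsilon}$ is used, and it is the one non-routine input in the argument.

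With $R := Y/X = xV \gg T^{1+\varepsilon/2}$, Lemma \ref{lemma:exponentialintegral} yields an $O(V R^{-A})$ bound on the integral over each dyadic piece, for any $A \geq 0$. Summing over the $O(\log T)$ dyadic values of $V$ and choosing $A$ sufficiently large in terms of $\varepsilon$, one obtains $\mathcal{E} = O((xT)^{-100})$, completing the proof of \eqref{eq:K+cutoff}. The main obstacle is not technical difficulty but rather the bookkeeping needed to verify that the competition between $x \sinh v$ and $2T$ in $\phi'(v)$ is resolved in favor of the former on $|v| \gg V_0$; once this is in place, the integration-by-parts lemma does the rest.
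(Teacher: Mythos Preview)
Your proof is correct and follows the same route as the paper, which simply says ``This follows from the integration by parts lemma''; you have supplied the details. One minor quibble: from $V \ll \Delta^{-1} T^{\varepsilon}$ you only get $\Delta \ll V^{-1} T^{\varepsilon}$, so the weight is $(V T^{-\varepsilon})$-inert rather than strictly $V$-inert, but this harmless $T^{\varepsilon}$ is absorbed when you take $A$ large at the end.
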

\begin{proof}
This follows from the integration by parts lemma.
\end{proof}

\section{Double Poisson summation}
Next we apply Poisson summation to the $m$ and $n$ sums in \eqref{eq:SH+formula}, giving
\begin{equation}
  \mathcal{S}(H_{+}) = \frac{\Delta T}{N}  \sum_C
\sum_{c \equiv 0 \shortmod{2^4}}
  \sum_{k,\ell} \frac{T(-k,\ell;c)}{c^3} 
I(k,\ell,c) + O(T^{-100}),
\end{equation}
where
\begin{equation}
\label{eq:Ikellcdef}
 I(k,\ell,c) = \int_0^{\infty} \int_0^{\infty} 
 x^{-iU} y^{iU} e_c(kx - \ell y) K_{+}\Big(\frac{4 \pi xy}{c}\Big)
 w(x,y,c) dx dy.
\end{equation}

By Lemma \ref{lemma:TabcCalculation}, $T(-k,\ell;c) = 0$ unless $(k,c) = (\ell, c)$ and $4|(k,\ell)$, in which case
\begin{equation}
 T(-k,\ell,c) = c^{3/2} (k, 2^{-2-\delta} c) 
 e_c(k\ell/4)   \Big(\frac{k' \ell'}{c^*}\Big) g_{\delta}(k',\ell', c_o) \delta(c^*|\frac{c_o}{(k,c_o)})
 \prod_{\substack{p | c_{\square}, \thinspace p \nmid \frac{c_o}{(k,c_o)}}} (1-p^{-1})
 ,
\end{equation}
where $k' = \frac{k}{(k,c)}$, $\ell' = \frac{\ell}{(\ell,c)}$, $\delta$ was defined in \eqref{delta-parity}, and other notation is carried over from Lemma \ref{lemma:TabcCalculation} (here the function $g_{\delta}$ has the same properties as the one appearing in Lemma \ref{lemma:TabcCalculation}, but may not agree with it).

Write 
\begin{equation}
c = 2^{\lambda} c_o, \qquad k = 2^{\nu} k_o \qquad \ell = 2^{\gamma} \ell_o, 
\end{equation}
with $(k_o \ell_o c_o, 2) = 1$.  The condition $(k,c) = (\ell, c)$ now becomes $\min(\lambda, \nu) = \min(\lambda, \gamma)$, and $(k_o, c_o) = (\ell_o, c_o)$.  The condition $4|(k,\ell)$ now means $\nu, \gamma \geq 2$.  
We also write 
\begin{equation}
c_o = q r_1^2 r_2^2
\end{equation}
where $q$ is square-free, $r_1 | q^{\infty}$, and $(q,r_2) = 1$.  With this notation, $c^* = q$ and $c_{\square}$ shares the same prime factors as $r_2$.  
Note $\frac{c_o}{(k, c_o)} = \frac{q r_1^2}{(k_o, q r_1^2)} \frac{r_2^2}{(k_o, r_2^2)}$.  Thus the condition $ c^* | \frac{c_o}{(k,c_o)}$ means $q | \frac{q r_1^2}{(k_o, q r_1^2)}$, which is equivalent to $(k_o, q r_1^2) | r_1^2$.
Then
\begin{multline}
\label{eq:SH+formula2}
  \mathcal{S}(H_{+}) = \sum_C \frac{\Delta T}{NC^{3/2}} 
\sum_{\substack{\nu, \gamma \geq 2, \thinspace \lambda \geq 4 \\ \min(\lambda, \nu) = \min(\lambda, \gamma)}} 
(2^{\nu}, 2^{\lambda-2-\delta})
\sum_{\substack{ (r_1 r_2, 2) = 1}}
\sumstar_{\substack{q: r_1 | q^{\infty} \\ (q,2r_2) = 1}}
\sum_{\substack{(k_o \ell_o, 2)=1 \\ (k_o, c_o) = (\ell_o, c_o) \\ (k_o, q r_1^2) | r_1^2}}
\\
\Big(\prod_{\substack{p | r_2, \thinspace p \nmid \frac{r_2^2}{(k_o,r_2^2)}}} (1-p^{-1}) \Big)
\Big(\frac{k' \ell'}{c^*}\Big)
  (k_o, c_o)
  e_c(k \ell/4) 
  g_{\delta}(k',\ell', c_o)
I(k,\ell,c) + O(T^{-100}),
\end{multline}
where in places to simplify the notation we did not display the substituted values such as $c_o = q r_1^2 r_2^2$.   We remark that the statement that $g_{\delta}(k', \ell', c_o)$ depends additionally on $(\frac{c}{(a,c)}, 2^{2+\delta})$ means it depends on $(2^{\lambda-\min(\lambda,\nu)}, 2^{2+\delta})$.  In particular, $g_{\delta}$ depends additionally on $\lambda, \nu$, but only lightly, in the sense that it falls in the four following cases:
\begin{equation}
\label{eq:alphabetalines}
\text{i) }  \lambda \leq \nu, \qquad 
\text{ii) } \lambda = \nu + 1, \qquad  
\text{iii) } \lambda = \nu + 2, \qquad
\text{iv) } \lambda \geq \nu + 3.
\end{equation}

Next we want to give a variable name to $(k_o, c_o)$, etc.  We have $(k_o, c_o) = (k_o, q r_1^2) (k_o, r_2^2)$, and similarly $(\ell_o, c_o) = (\ell_o, qr_1^2)(k_o, r_2^2)$.  Let 
\begin{equation}
(k_o, q r_1^2) = (\ell_o, q r_1^2) = g_1, 
\qquad \text{and} \qquad
(k_o, r_2^2) = (\ell_o, r_2^2) = g_2.
\end{equation}
Here $g_1$ runs over divisors of $r_1^2$ and $g_2$ runs over divisors of $r_2^2$.  Let
\begin{equation}
k_o = g_1 g_2 k_o', \qquad \ell_o = g_1 g_2 \ell_o',
\end{equation}
where
$(k_o' \ell_o', q \frac{r_1^2}{g_1}) = 1$ and $(k_o' \ell_o', \frac{r_2^2}{g_2}) = 1$.
In our context, the presence of the Jacobi symbol $(\frac{k' \ell'}{q})$ means that we may automatically assume $(k_o' \ell_o', q) = 1$ which implies $(k_o' \ell_o', q \frac{r_1^2}{g_1}) = 1$.
Note that $k' = k_o' 2^{\nu- \min(\nu, \lambda)}$ and $\ell' = \ell_o' 2^{\gamma - \min(\gamma, \lambda)}$.  We also apply quadratic reciprocity, giving $(\frac{k_o' \ell_o'}{q}) = (\frac{q}{k_o' \ell_o'})$ times a function depending on $k_o', \ell_o', q'$ modulo $8$ (which only alters the definition of $g$).
Making these substitutions, we obtain
\begin{multline}
\label{eq:SH+formula3}
  \mathcal{S}(H_{+}) = \sum_C \frac{\Delta T}{NC^{3/2}} 
\sum_{\substack{\nu, \gamma \geq 2, \thinspace \lambda \geq 4 \\ \min(\lambda, \nu) = \min(\lambda, \gamma)}} 
(2^{\nu}, 2^{\lambda-2-\delta})
\sum_{\substack{ (r_1 r_2, 2) = 1}}
\sum_{\substack{g_1 |r_1^2 \\ g_2 | r_2^2}} g_1 g_2
\prod_{\substack{p | r_2, \thinspace p \nmid \frac{r_2^2}{g_2}}} (1-p^{-1})
\\
\sumstar_{\substack{q: r_1 | q^{\infty} \\ (q,2r_2) = 1}}
\sum_{\substack{(k_o' \ell_o', 2)=1 \\ (k_o' \ell_o', \frac{r_2^2}{g_2}) = 1}}
\Big(\frac{q}{k_o' \ell_o'}\Big)
  e_c(k \ell/4)
  g_{\lambda, \nu, \gamma, \delta}(k_o',\ell_o', q)
I(k,\ell,c) + O(T^{-100}),
\end{multline}
where $g_{\lambda,\nu,\gamma,\delta}$ is some new function modulo $8$. 

Finally, we decompose $g$ into Dirichlet characters modulo $8$, and break up the sum according to the four cases in \eqref{eq:alphabetalines}, leading to a formula of the form
\begin{equation}
|\mathcal{S}(H_{+})|  \ll 
\max_{\substack{\eta_1, \eta_2, \eta_3 \\ \text{cases in } \eqref{eq:alphabetalines}}}  |\mathcal{S}_{\eta}(H_{+})|,
\end{equation}
where 
\begin{multline}
\label{eq:SH+formula4}
  \mathcal{S}_{\eta}(H_{+}) = \sum_C \frac{\Delta T}{NC^{3/2}} 
\sum_{\substack{\nu, \gamma \geq 2, \thinspace \lambda \geq 4 \\ \min(\lambda, \nu) = \min(\lambda, \gamma) \\ \text{one of  \eqref{eq:alphabetalines} holds}}} 
(2^{\nu}, 2^{\lambda-2-\delta})
\sum_{\substack{ (r_1 r_2, 2) = 1}}
\sum_{\substack{g_1 |r_1^2 \\ g_2 | r_2^2}} g_1 g_2
\prod_{\substack{p | r_2, \thinspace p \nmid \frac{r_2^2}{g_2}}} (1-p^{-1})
\\
\sumstar_{\substack{q: r_1 | q^{\infty} \\ (q,2r_2) = 1}}
\sum_{\substack{(k_o' \ell_o', 2)=1 \\ (k_o' \ell_o', \frac{r_2^2}{g_2}) = 1}}
\eta_1(k_o') \eta_2(\ell_o') \eta_3(q)
\Big(\frac{q}{k_o' \ell_o'}\Big)
  e_c(k \ell/4)
I(k,\ell,c) + O(T^{-100}).
\end{multline}


\section{The behavior of $I(k,\ell,c)$}\label{poissonintegral}
\label{section:Ibehavior}
The purpose of this section is to develop the analytic properties of $I(k, \ell, c)$.  We begin with a few reduction steps.
Inserting \eqref{eq:K+formula} into \eqref{eq:Ikellcdef}, we have
\begin{equation}
\label{eq:Ikellc}
 I(k,\ell,c) = 
 \intR g(\Delta v) e^{-2ivT}
 \int_0^{\infty} \int_0^{\infty} 
 x^{-iU} y^{iU} e_c(kx - \ell y + 2 xy(\cosh{v} - 1))
 w(x,y,c) dx dy dv.
\end{equation}

Let $A,B>0$, $\epsilon\ge 0$ be real numbers and $N$ and $U$ as before, and consider the integral
\begin{equation}
 \label{eq:Idef} I(A, B, U, \epsilon, N) = \int_{\mr^2} e^{i \phi(x,y)} w_N(x,y, \cdot)dx dy,
\end{equation}
where $w_N$ is $1$-inert, supported on $x \asymp y \asymp N$ with $N\gg 1$, and
\begin{equation}
 \phi(x,y) = - U \log{x} + U \log{y} + Ax - By + \epsilon xy.
\end{equation}
In our case,
\begin{equation}
\label{eq:ABepsilondefs}
A = \frac{2 \pi k}{c}, \qquad B = \frac{2\pi \ell}{c}, \qquad \epsilon = \epsilon(v) = 4 \pi \frac{\cosh{v} -1}{c},
\end{equation}
and then
\begin{equation}
I(k, \ell, c) = \intR g(\Delta v) e^{-2ivT} I(A,B,U,\epsilon(v), N) dv.
\end{equation}
Note that in our study of $I(A, B, U, \epsilon, N)$, we may assume throughout that $\epsilon>0$, because $\epsilon(v)=0$ if an only if $v=0$, a set of measure 0 for the $v$-integral of $I(k, \ell, c)$.

Moreover, we may wish to assume that $w_N(x,y) = w_N(x,y, \cdot)$ depends on some unspecified finite list of additional variables that are held suppressed in the notation.  In this situation we will assume that $w_N$ is $1$-inert in terms of all the variables, not just $x$ and $y$.

\begin{mylemma}
\label{lemma:Iexpansion1}
 Suppose that $\epsilon N^2 = o(U)$, with $U \rightarrow \infty$.  
\\
1. Then $I(A,B,U,\epsilon,N) \ll_C N U^{-C}$ with $C>0$ arbitrarily large, unless
\begin{equation}
\label{eq:ABsize}
 A \asymp B \asymp \frac{U}{N}.
\end{equation}
\\
2. In the range \eqref{eq:ABsize}, we have
\begin{equation}
\label{eq:IasymptoticStationaryPhase}
 I = \frac{N^2}{U} e^{i\phi(x_0, y_0)} W(\cdot) + O(N^2 U^{-C}),
\end{equation}
where $(x_0, y_0)$ is the unique solution to $\nabla \phi(x_0,y_0) = {\bf 0}$, and $W$ is $1$-inert in terms of any suppressed variables on which $w_N$ may depend.
\\
3. Supposing \eqref{eq:ABsize} holds, $\phi(x_0, y_0)$ has the asymptotic expansion
\begin{equation}
\label{eq:phiTaylor}
 \phi(x_0, y_0) = U \log(A/B) + 
 \sum_{j=0}^{J} c_j U \Big(\frac{\epsilon U}{AB}\Big)^{1+2j}
 + 
 O\Big(U \Big(\frac{\epsilon U}{AB}\Big)^{3+2J} \Big),
\end{equation}
for some absolute constants $c_j$.
\end{mylemma}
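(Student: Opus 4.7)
\medskip

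\noindent\textbf{Plan of proof.}
My plan is to analyze the two-dimensional oscillatory integral in \eqref{eq:Idef} directly using Lemmas \ref{lemma:exponentialintegral} and \ref{lemma:exponentialintegralStatPhase}. The key is to note that with $\phi(x,y)=-U\log x+U\log y+Ax-By+\epsilon xy$ one has
\[
\frac{\partial\phi}{\partial x}=-\frac{U}{x}+A+\epsilon y,\qquad \frac{\partial\phi}{\partial y}=\frac{U}{y}-B+\epsilon x,
\]
and, on the support $x,y\asymp N$, the hypothesis $\epsilon N^{2}=o(U)$ forces the $\epsilon y$ and $\epsilon x$ contributions to be negligible compared with $U/N$. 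For Part~1, if $A$ is not of order $U/N$ (in the sense that either $A\ll (U/N)T^{-\varepsilon}$ or $A\gg (U/N)T^{\varepsilon}$), then on the support $|\partial\phi/\partial x|\gg \max(A,U/N)$, and all higher $x$-derivatives of $\phi$ satisfy $\partial_x^{j}\phi\ll U/N^{j}$. Lemma \ref{lemma:exponentialintegral} then gives arbitrary polynomial decay in $U$, and the same argument applies to the $y$-variable, which establishes the localization $A\asymp B\asymp U/N$.

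For Part~2, I proceed by applying Lemma \ref{lemma:exponentialintegralStatPhase} iteratively. The Hessian of $\phi$ has entries $\phi_{xx}=U/x^{2}$, $\phi_{yy}=-U/y^{2}$, $\phi_{xy}=\epsilon$; under the hypothesis $\epsilon N^{2}=o(U)$ the cross term is dominated, so each diagonal entry has size $\asymp U/N^{2}$ and one has $\phi_{xx}\gg U/N^2\geq R$ for $R=U^{1-\varepsilon}$ (say). Applying stationary phase first in $x$ (treating $y,\epsilon,\ldots$ as suppressed inert parameters) produces a factor $N/\sqrt{U}$ and a new $1$-inert amplitude, evaluated at the unique critical point $x_{*}(y)$. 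Then stationary phase in $y$ gives a second factor $N/\sqrt{U}$ and evaluation at the joint critical point $(x_{0},y_{0})$. The use of the iterated stationary phase remark below Lemma \ref{lemma:exponentialintegralStatPhase} guarantees that the resulting amplitude $W$ remains $1$-inert in the additional suppressed variables, giving the asymptotic \eqref{eq:IasymptoticStationaryPhase}.

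For Part~3, I will compute $\phi(x_{0},y_{0})$ explicitly by solving the critical-point system $U/x_{0}=A+\epsilon y_{0}$ and $U/y_{0}=B-\epsilon x_{0}$. Setting $t:=\epsilon x_{0}y_{0}$ gives $Ax_{0}=U-t$, $By_{0}=U+t$, and multiplying yields the quadratic $t^{2}+(AB/\epsilon)t-U^{2}=0$ with the correct root
\[
t=\frac{AB}{2\epsilon}\bigl(\sqrt{1+4U^{2}\epsilon^{2}/(AB)^{2}}-1\bigr),
\]
which, since $U\epsilon/(AB)\asymp \epsilon N^{2}/U$ is small, expands as $t=\sum_{j\ge 0} a_{j}U(\epsilon U/(AB))^{1+2j}$ for explicit constants $a_{j}$. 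A direct substitution gives
\[
\phi(x_{0},y_{0})=U\log(y_{0}/x_{0})+Ax_{0}-By_{0}+\epsilon x_{0}y_{0}=U\log(y_{0}/x_{0})-t,
\]
and $y_{0}/x_{0}=(A/B)\cdot(U+t)/(U-t)$ yields $U\log(y_{0}/x_{0})=U\log(A/B)+2t+\tfrac{2}{3}t^{3}/U^{2}+\cdots$. Combining with $-t$ and collecting powers of $\epsilon U/(AB)$ gives the claimed expansion \eqref{eq:phiTaylor}, with the error term controlled by truncating the power series at the appropriate order.

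The main obstacle is bookkeeping: verifying that on the support of $w_N$ the second derivatives $\phi_{xx}$, $\phi_{yy}$ are genuinely comparable to $U/N^{2}$ and not canceled by the $\epsilon$ terms (which the hypothesis $\epsilon N^{2}=o(U)$ ensures), and tracking that the inert character of the amplitude is preserved under both integrations in the possibly many suppressed auxiliary variables. The explicit Taylor expansion in Part~3 is purely algebraic once the quadratic for $t$ is in hand, so this step is routine modulo careful bookkeeping of the implied constants.
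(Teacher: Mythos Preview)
Your proposal is correct and follows essentially the same approach as the paper: integration by parts (Lemma \ref{lemma:exponentialintegral}) for the localization in Part~1, iterated stationary phase (Lemma \ref{lemma:exponentialintegralStatPhase} and the remark following it) for Part~2, and an explicit solution of the quadratic stationary-point equation for Part~3. The only cosmetic difference is in Part~3, where the paper first rescales via $x\mapsto Ux/A$, $y\mapsto Uy/B$ to reduce to a normalized phase $\Phi(x,y)=-\log x+\log y+x-y+\delta xy$ with $\delta=\epsilon U/(AB)$, whereas you work directly with $t=\epsilon x_{0}y_{0}$; these parameterizations are equivalent (your $t/U$ is the paper's $\delta r_{0}$), and both yield the same odd power series in $\epsilon U/(AB)$.
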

Note that \eqref{eq:ABsize} implies $\frac{\epsilon U}{AB} \asymp \frac{\epsilon  N^2}{U} = o(1)$, so that \eqref{eq:phiTaylor} is an asymptotic expansion.  We also remark that the assumption $\epsilon N^2 = o(U)$ means that the dominant part of $\phi$ comes from $-U \log{x} + U \log{y}$, and $\epsilon xy$ is a smaller perturbation.

\begin{proof}
 The integration by parts lemma (Lemma \ref{lemma:exponentialintegral}) shows the integral is small unless \eqref{eq:ABsize} holds.  Assuming \eqref{eq:ABsize} holds, then Lemma \ref{lemma:exponentialintegralStatPhase} may be iteratively applied (using the remarks following Lemma \ref{lemma:exponentialintegralStatPhase}) which gives the form \eqref{eq:IasymptoticStationaryPhase}, with a $1$-inert function $W$.

 It only remains to derive the Taylor expansion for $\phi(x_0, y_0)$.  We have
 \begin{equation}
 \phi(Ux/A, Uy/B) = U \log(A/B) + U \Phi(x,y), 
 \end{equation}
where
\begin{equation}
\Phi(x,y) = -\log{x} + \log{y} + x-y + \delta xy, \quad \text{and} \quad \delta = \frac{\epsilon U}{AB} = o(1).
\end{equation}
By a simple calculation, we have that $\nabla \Phi(x_0,y_0)=\bf{0}$ if and only if $x_0 = 1 - \delta x_0 y_0$ and $y_0 = 1 + \delta x_0 y_0$. 
 Thus
\begin{equation}
x_0 + y_0 = 2, \qquad \text{and} \qquad y_0 - x_0 = 2 \delta x_0 y_0.
\end{equation}
Letting $r_0 = x_0 y_0$, we see that it satisfies the relation $r_0 = (1- \delta r_0)(1+\delta r_0) = 1 - \delta^2 r_0^2$.  Solving this explicitly, we see that $r_0$ is an even function of $\delta$, analytic for $|\delta| < 1/2$.  Note $r_0 = 1 - \delta^2 + O(\delta^4)$.
Then we have
\begin{equation}
\Phi(x_0, y_0) = \log(y_0/x_0) + x_0 - y_0 + \delta x_0 y_0 = \log\Big(\frac{1+\delta r_0}{1-\delta r_0}\Big) - \delta r_0,
\end{equation}
which is an odd function of $\delta$, with power series expansion of the form
$\Phi(x_0, y_0) =  \delta - \frac13 \delta^3 + \dots$.  Translating back to the original notation gives \eqref{eq:phiTaylor}.
\end{proof}

\begin{mylemma}
\label{lemma:Iexpansion2}
 Suppose that $\frac{U}{\epsilon N^2} = o(1)$.  
 \\
1.  Then $I(A,B,U,\epsilon,N) \ll_C N^{-C}$ with $C>0$ arbitrarily large, unless
\begin{equation}
\label{eq:ABsize2}
 |A| \asymp |B| \asymp \epsilon N, \quad A  < 0, \text{ and } B  > 0.
\end{equation}
2. Assuming \eqref{eq:ABsize2}, then
\begin{equation}
\label{eq:IasymptoticStationaryPhase2}
 I = \frac{1}{\epsilon} e^{i\phi(x_0, y_0)} W(\cdot) + O(N^2 U^{-C}),
\end{equation}
where $(x_0, y_0)$ is the unique solution to $\nabla \phi(x_0,y_0) = {\bf 0}$ and $W$ is $1$-inert in terms of any suppressed variables on which $w_N$ may depend.
\\
3.
Finally, $\phi(x_0, y_0)$ has the following Taylor expansion
\begin{equation}
\label{eq:phiTaylor2}
 \phi(x_0, y_0) = 
 \frac{AB}{\epsilon}
 \Big[
\sum_{j=0}^{J} c_j \Big(\frac{U \varepsilon}{AB}\Big)^{2j}
+ O\Big(\frac{U \varepsilon}{AB}\Big)^{2J+2}\Big)
\Big]
   + U \log\Big(\frac{-A}{B}\Big)
 ,
\end{equation}
with
certain absolute constants $c_j$.
\end{mylemma}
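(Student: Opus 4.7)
The plan mirrors that of Lemma \ref{lemma:Iexpansion1}, with the roles of the two competing parts of $\phi$ interchanged: here the dominant contribution is $\epsilon xy$ of size $\epsilon N^2 \gg U$, while $-U\log x + U \log y$ is the perturbation. The three parts of the statement would be handled by integration by parts, iterated stationary phase, and an algebraic Taylor expansion of $\phi(x_0, y_0)$.

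For part (1), I would apply Lemma \ref{lemma:exponentialintegral} to the $x$-integral with $y$ fixed. Since $|U/x| \asymp U/N$ is negligible compared to $\epsilon N$, the derivative $\phi_x = A + \epsilon y - U/x$ has uniform size $\max(|A|, \epsilon N)$ unless $A + \epsilon y$ undergoes a delicate cancellation, which forces $A < 0$ with $|A| \asymp \epsilon N$. Outside this range, integration by parts in $x$ gives arbitrarily large power savings in $N$. The symmetric argument on the $y$-integral forces $B > 0$ with $|B| \asymp \epsilon N$, establishing \eqref{eq:ABsize2}.

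For part (2), I first scale by $x = (B/\epsilon) u$, $y = (-A/\epsilon) v$ to bring the problem to unit-size coordinates. Setting $\mu = AB/\epsilon$ and $\tau = U/\mu = o(1)$, the Jacobian is $\mu/\epsilon$ and the phase reads $\phi = U \log(-A/B) + \mu [u + v - uv + \tau \log(v/u)]$, with a $1$-inert weight supported on $u \asymp v \asymp 1$. The principal difficulty is that the Hessian of the bracketed factor at its stationary point is dominated by its off-diagonal entry $-1$, with diagonal entries of order $\tau = o(1)$, so the iterated form of Lemma \ref{lemma:exponentialintegralStatPhase} cannot be applied directly in the $(u,v)$ coordinates. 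I would resolve this with a further fixed linear change of variables $u = p + q$, $v = p - q$ which diagonalizes the leading Hessian: the phase becomes $U\log(-A/B) + \mu[2p - p^2 + q^2 + \tau \log((p-q)/(p+q))]$, with stationary point near $(1, \tau)$ and Hessian approximately $\mu \cdot \mathrm{diag}(-2, 2)$ plus mixed corrections of order $\mu \tau = U \ll |\mu|$. Iterated stationary phase in $p$ then $q$ now applies with $Y = |\mu|$ and $X_1 = X_2 = 1$, yielding a gain of $1/|\mu|$; multiplying by the Jacobian $\mu / \epsilon$ gives the claimed prefactor $1/\epsilon$ in \eqref{eq:IasymptoticStationaryPhase2} together with a $1$-inert function $W$.

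For part (3), the critical point equations of the bracketed factor $u + v - uv + \tau \log(v/u)$ read $u_0(1-v_0) = \tau$ and $v_0(u_0-1) = \tau$, which combine to give $u_0 + v_0 = 2 u_0 v_0$ and $u_0 - v_0 = 2\tau$. Setting $s_0 = u_0 v_0$, one obtains $s_0^2 - s_0 - \tau^2 = 0$, so $s_0 = (1 + \sqrt{1 + 4\tau^2})/2$ is analytic in $\tau^2$, and $u_0 = s_0 + \tau$, $v_0 = s_0 - \tau$. A direct computation then gives $\phi(x_0, y_0) - U \log(-A/B) = \mu \bigl[ s_0 + \tau \log\bigl((s_0 - \tau)/(s_0 + \tau)\bigr) \bigr]$, and since $\log((s_0-\tau)/(s_0+\tau))$ is odd in $\tau$, the product $\tau \log((s_0 - \tau)/(s_0+\tau))$ is even in $\tau$. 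Hence $\phi(x_0, y_0) - U\log(-A/B)$ equals $\mu$ times an analytic function of $\tau^2 = (U\epsilon/(AB))^2$, and Taylor expansion in $\tau^2$ yields the claimed form \eqref{eq:phiTaylor2}.
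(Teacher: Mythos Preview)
Your proposal is correct and follows essentially the same route as the paper: integration by parts in each variable for part (1), a linear change of variables to diagonalize the leading bilinear form $\epsilon xy$ before iterated stationary phase for part (2), and solving the quadratic for the critical point to obtain an even Taylor series in the small parameter for part (3). The only cosmetic difference is the order of operations in part (2): you rescale first to unit-size coordinates and then rotate $u=p+q$, $v=p-q$, whereas the paper rotates $x=u+v$, $y=u-v$ in the original variables and rescales only for the Taylor analysis; the resulting critical-point equations and series agree.
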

The condition $U = o(\epsilon N^2)$ means that the dominant phase in $\phi$ is $\epsilon xy$, and the phase $-U\log{x} + U\log{y}$ is a perturbation.

\begin{proof}
Considering the $x$-integral, Lemma \ref{lemma:exponentialintegral} shows that $I \ll N^{-C}$ unless
\begin{equation}
\Big|\frac{A}{\epsilon N} + \frac{y}{N} \Big| \ll \frac{U}{\epsilon N^2} = o(1).
\end{equation}
Since $1 \ll \frac{y}{N} \ll 1$ (with certain absolute implied constants), this means that $|A| \asymp |\epsilon| N$ with $A$ having the opposite sign of $\epsilon$ (i.e., $A <0$).  Similarly, considering the $y$-integral shows that $I$ is small unless $|B| \asymp \epsilon N$ with $B$ having the same sign as $\epsilon$ (i.e., $B > 0$).

Next we wish to apply Lemma \ref{lemma:exponentialintegralStatPhase}  to $I$. There is a minor technical issue from the fact that the second derivative with respect to $x$ (or $y$) of $\epsilon xy$  vanishes, even though this should be viewed as the dominant phase.  This issue may be circumvented by a simple change of variable to diagonalize this quadratic form.  Precisely, if we let $x = u+v$ and $y=u-v$, then
\begin{equation}
 \varphi(u,v):= \phi(u+v, u-v) = 
 \epsilon u^2 + \alpha u - \epsilon v^2 + \beta v 
 + U \log \Big(\frac{u-v}{u+v} \Big),
\end{equation}
for certain $\alpha,\beta$ whose values are immaterial.  Then a simple calculation gives
\begin{equation}
 \frac{\partial^2}{\partial u^2} \varphi(u,v) = 2 \epsilon  
 + U\Big(\frac{-1}{(u-v)^2} + \frac{1}{(u+v)^2}\Big)
 =
 2 \epsilon (1 + O(\epsilon^{-1} N^{-2} U)) \gg |\epsilon|.
\end{equation}
A similar calculation shows $|\frac{\partial^2}{\partial v^2} \varphi(u,v) | \gg |\epsilon|$.  Once we know that stationary phase can be applied after this linear change of variables, we can then revert back to the original variables $x,y$, giving
\begin{equation}
 I = \frac{1}{\epsilon} e^{i\phi(x_0, y_0)} W_T(\cdot) + O(N^{-C}),
\end{equation}
where $\nabla \phi(x_0, y_0) = {\bf 0}$.  We have
\begin{equation}
 \phi(Bx/\epsilon, -Ay/\epsilon) = 
\frac{-AB}{\epsilon}
\Phi(x,y)
   + U \log\Big(\frac{-A}{B}\Big),
\end{equation}
where
\begin{equation}
 \Phi(x,y) = xy - x -y + \delta \log(y/x),
 \quad
 \text{and}
 \quad
 \delta = \frac{U \epsilon}{AB} \asymp \frac{U}{\epsilon N^2} = o(1).
\end{equation}
A simple calculation shows $\nabla \Phi(x_0, y_0) = {\bf 0}$ if and only if
\begin{equation}
 x_0 = 1- \frac{\delta}{y_0}, \qquad y_0 = 1 + \frac{\delta}{x_0}.
\end{equation}
Solving these explicitly, we obtain
\begin{equation}
 x_0 = \frac{1-2 \delta + \sqrt{1+4\delta^2}}{2}, 
 \qquad
 y_0 = \frac{1+2 \delta + \sqrt{1+4\delta^2}}{2},
\end{equation}
and thus
\begin{equation}
 \Phi(x_0, y_0) = 
 - \frac{1 + \sqrt{1 + 4\delta^2}}{2} 
 -
 \delta \log\Big(\frac{1 + 2 \delta + \sqrt{1+4 \delta^2}}{1- 2 \delta + \sqrt{1+4\delta^2}}\Big)
 = -\sum_{j=0}^{\infty} c_j \delta^{j},
\end{equation}
which is analytic in $\delta$ for $|\delta| < 1/2$, and also even with respect to $\delta$. 
\end{proof}

 \begin{rema}
\normalfont Lemmas \ref{lemma:Iexpansion1} and \ref{lemma:Iexpansion2} have some close similarities.  In both cases, the stationary phase method may be applied, and the stationary point can be explicitly found by solving a quadratic equation.  In each case, only one of the two roots is relevant, and the other is outside the support of the test function.  We expect, but did not confirm rigorously, that when $U \asymp \epsilon N^2$,  which is a range that is not needed in this paper, then both roots of the quadratic equation are relevant.  This situation is more complicated because the two roots may approach each other in which case a cubic Taylor approximation to the phase function is more applicable (as with the Airy function, for instance).  
 \end{rema}

\section{Cleaning up some terms}
In this section we take the opportunity to deal with some ranges of parameters for which relatively easy methods suffice.  This will simplify our exposition for the more difficult cases.

With the aid of the analysis from Section \ref{section:Ibehavior} we can now treat some ranges of $c$.  


\begin{mylemma}
\label{lemma:smallCBound}
The contribution to $ \mathcal{S}(H_{+})$ from  $C \ll \frac{N^2}{T^2} T^{\varepsilon}$ is bounded by $\Delta T^{1+\varepsilon}$.
\end{mylemma}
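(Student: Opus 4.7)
The plan is to split into two subcases according to whether $U$ is bounded by a small power of $T$. For very small $U$, a direct trivial bound using Weil and pointwise estimates on $K_{+}$ suffices; for larger $U$, we use the Poisson-summed form developed in the previous sections together with a stationary phase analysis of the outer $v$-integral.

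For $U \le T^{\varepsilon_0}$ with some small fixed $\varepsilon_0 > 0$, we work with \eqref{eq:SH+formula} directly. Since $x = 4\pi mn/c \asymp N^2/C \gg T^{2-\varepsilon}$ throughout the range $C \ll N^2 T^{\varepsilon}/T^2$, a stationary phase argument (via Lemmas \ref{lemma:K+cutoff} and \ref{lemma:exponentialintegralStatPhase}) gives $|K_{+}(x)| \ll x^{-1/2}$. Combining this with Weil's bound $|S(m^2, n^2; c)| \ll c^{1/2+\varepsilon}(m^2, n^2, c)^{1/2}$ and the standard estimate $\sum_{m, n \asymp N} (m^2, n^2, c)^{1/2}/\sqrt{mn} \ll N c^{\varepsilon}$, the contribution is $\ll \Delta T C^{1+\varepsilon} \ll \Delta T U^{1+\varepsilon} T^{\varepsilon} \ll \Delta T^{1+\varepsilon}$, which is acceptable whenever $U \le T^{\varepsilon_0}$.

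For $U \gg T^{\varepsilon_0}$, we use instead the Poisson-summed form \eqref{eq:SH+formula2} and the integral \eqref{eq:Ikellc}. By Lemma \ref{lemma:K+cutoff} the inner $v$-integration is localized to $|v| \ll x^{-1/2} T^{\varepsilon}$, so $\epsilon(v) = 4\pi(\cosh v - 1)/c \ll T^{\varepsilon}/N^2$, giving $\epsilon(v) N^2 \ll T^{\varepsilon} = o(U)$. Lemma \ref{lemma:Iexpansion1} then applies: $I(A, B, U, \epsilon(v), N)$ is negligible unless $|k|, |\ell| \asymp UC/N$, in which case it equals $\frac{N^2}{U} W(\cdot)\, e^{i \phi(x_0, y_0)}$. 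Using the expansion \eqref{eq:phiTaylor}, the outer $v$-phase is $-2Tv + U\log(k/\ell) - c_0 U^2 c v^2/(2\pi k \ell) + \dots$, with stationary point $v_0 \asymp TC/N^2 \ll 1/\Delta$ lying in the support of $g(\Delta v)$ (which is of size $\asymp 1$ there) and with second derivative of size $\asymp N^2/C$. A final stationary phase step yields $|I(k, \ell, c)| \ll N\sqrt{C}/U$.

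Combining with $|T(-k, \ell; c)| \ll c^{3/2+\varepsilon}(k, c)$ from Lemma \ref{lemma:TabcCalculation} and the gcd sum $\sum_{k, \ell \asymp UC/N,\, (k,c) = (\ell, c)}(k, c) \ll (UC/N)^2 c^{\varepsilon}$, the small-$C$ contribution to \eqref{eq:SH+formula2} is bounded by
\[
\frac{\Delta T}{N} \cdot C \cdot \frac{(UC/N)^2 C^{\varepsilon}}{C^{3/2-\varepsilon}} \cdot \frac{N\sqrt{C}}{U} \ll \frac{\Delta T U C^{2+\varepsilon}}{N^2} \ll \frac{\Delta U^2 T^{\varepsilon}}{T} \ll \Delta T^{1+\varepsilon},
\]
where we use $C \ll N^2 T^{\varepsilon}/T^2$, $N^2 \ll U T^{2+\varepsilon}$, and $U \le (2-\delta) T$. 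The main technical obstacle is verifying that stationary phase in the outer $v$-integral may be applied uniformly over all relevant $(k, \ell, c)$; this reduces to confirming that $v_0$ lies in the support of $g(\Delta v)$ and that the claimed second derivative estimate holds, both of which follow directly from our restrictions on $C$ and the standing assumption $\Delta \le T^{1-\varepsilon}$.
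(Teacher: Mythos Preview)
Your proof is correct and reaches the same bound as the paper, but the two cases are handled somewhat differently.

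For large $U$, your approach is essentially the paper's: both use the Poisson-summed form, invoke Lemma~\ref{lemma:Iexpansion1} to localize $k,\ell \asymp UC/N$, and obtain $|I(k,\ell,c)| \ll N\sqrt{C}/U$. The paper simply integrates trivially over $|v| \ll x^{-1/2}T^{\varepsilon}$ to get this bound, whereas you invoke a stationary-phase step in $v$. Your stationary phase is harmless but superfluous here: the range of $v$ is already so short that the trivial bound matches what stationary phase would give. In particular, the ``main technical obstacle'' you flag (checking that the stationary point lies in the support and that the second derivative is large) is not actually needed for the argument.

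For small $U$, the approaches genuinely differ. The paper stays with the Poisson-summed expression, uses integration by parts to see that only $k=\ell=0$ can survive (since $|k|,|\ell| \ll C T^{\varepsilon}/N < 1$ in this range), and then invokes the special evaluation $T(0,0;c) \ll c^{5/2}\delta(c^*=1)$ from Lemma~\ref{lemma:TabcCalculation}. You instead revert to the pre-Poisson sum \eqref{eq:SH+formula}, bound $|K_+(x)| \ll x^{-1/2}$ via Lemma~\ref{lemma:K+cutoff}, and apply the Weil bound directly. Your route is more elementary in that it avoids any special structure of $T(0,0;c)$, at the cost of requiring the standard gcd estimate $\sum_{m,n\asymp N}(m^2,n^2,c)^{1/2} \ll N^2 c^{\varepsilon}$. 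Both yield $\ll \Delta T C^{1+\varepsilon} \ll \Delta T^{1+\varepsilon}$ once $C \ll U T^{\varepsilon}$ and $U \ll T^{\varepsilon_0}$ are used.
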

\begin{proof}
Let $\mathcal{S}$ be the contribution to $ \mathcal{S}(H_{+})$ from  $C \ll \frac{N^2}{T^2} T^{\varepsilon}$. Since $x \asymp \frac{N^2}{C}$, the assumed upper bound on $C$ means $x \gg T^{2-\varepsilon}$, so that the conditions to apply Lemma \ref{lemma:K+cutoff} are in effect.  Applying Lemma \eqref{eq:K+cutoff} to \eqref{eq:Ikellcdef}, we deduce
\begin{equation}
I(k, \ell, c) = \int_{|v| \ll  x^{-1/2} T^{\varepsilon}} 
e^{- 2iTv} g(\Delta v) \eta(v) 
I(A, B, U, \epsilon(v), N)
dv + O(T^{-50}),
\end{equation}
with parameters as given in \eqref{eq:ABepsilondefs}.  Under the present assumptions, we have $\epsilon \ll \frac{v^2}{c} \ll \frac{T^{2\varepsilon}}{x c} \asymp \frac{T^{2\varepsilon}}{N^2}$.  Therefore, in the notation of \eqref{eq:ABepsilondefs}, we have $\epsilon N^2 \ll T^{2\varepsilon}$.

First consider the case where $U \gg T^{3\varepsilon}$.  In this case, $\epsilon N^2 = o(U)$, and so Lemma \ref{lemma:Iexpansion1} implies $I(A,B,U, \epsilon, N) \ll U^{-1} N^2$ and is very small unless $A \asymp B \asymp \frac{U}{N}$.  Translating notation, we may assume $|k| \asymp |\ell| \asymp \frac{CU}{N}$, and in particular, $k$ and $\ell$ are nonzero.  Integrating trivially over $v$, we deduce
\begin{equation}
\label{eq:IkellcboundInLemma}
I(k,\ell,c) \ll \frac{N C^{1/2} T^{\varepsilon}}{U} \Big(1 + \frac{|k| N}{CU}\Big)^{-100} \Big(1 + \frac{|\ell| N}{CU}\Big)^{-100}.
\end{equation}
Inserting this bound into \eqref{eq:SH+formula3}, we obtain 
\begin{multline}
|\mathcal{S}| \ll 
\frac{\Delta T T^{\varepsilon}}{UC}
\sum_{\substack{\nu, \gamma \geq 2, \thinspace \lambda \geq 4 \\ \min(\lambda, \nu) = \min(\lambda, \gamma)}} 
(2^{\nu}, 2^{\lambda-2-\delta})
\\
\sum_{k_o', \ell_o' \neq 0} 
\sum_{r_1, r_2}
\sum_{\substack{g_1|r_1^2 \\ g_2 | r_2^2}} g_1 g_2
\sum_{\substack{q^{\infty} \equiv 0 \shortmod{r_1} \\ 
	q \asymp \frac{C}{2^{\lambda} r_1^2 r_2^2}}}
	\Big(1 + \frac{|k_o' 2^{\nu} g_1 g_2| N}{CU}\Big)^{-100} \Big(1 + \frac{|\ell_o' 2^{\gamma} g_1 g_2| N}{CU}\Big)^{-100}.
\end{multline}
Estimating the sum trivially, and simplifying using $C \ll \frac{N^2}{T^2} T^{\varepsilon}$ and $N \ll N_{\text{max}} \ll U^{1/2} T^{1+\varepsilon}$, we deduce 
\begin{equation}
 |\mathcal{S}| \ll \frac{\Delta T}{N} \frac{C^2 U}{N} T^{\varepsilon} \ll \frac{\Delta  U N^2}{T^3} T^{\varepsilon}  \ll \Delta T   \frac{U^2}{T^2} T^{\varepsilon},
\end{equation}
which is acceptable since $U \ll T$.

Next we indicate the changes needed to handle the case $U \ll T^{3\varepsilon}$.  
Integration by parts (Lemma \eqref{lemma:exponentialintegral}) shows that $I(A, B, U, \epsilon, N)$ is very small unless $A, B \ll \frac{T^{3 \varepsilon}}{N}$, equivalently, $|k|, |\ell| \ll \frac{C}{N} T^{3 \varepsilon}$.  Using $C \ll \frac{N^2}{T^2} T^{\varepsilon}$ and $N \ll N_{\text{max}} \ll T^{1+3 \varepsilon}$, this means that we only need to consider $k = \ell = 0$. A trivial bound implies $I(0, 0, c) \ll N C^{1/2} T^{\varepsilon}$.

  Using the final sentence of Lemma \ref{lemma:TabcCalculation}, we see that
 the contribution to $ \mathcal{S}$ from $k=\ell=0$ is bounded by
\begin{equation}
\frac{\Delta T}{N C^{3/2}}
\frac{N C^{1/2} T^{\varepsilon}}{U}
\sum_{r_2 \asymp C^{1/2}} C
\ll 
\frac{\Delta T}{U} T^{\varepsilon} C^{1/2}
\ll 
 \frac{\Delta N}{U} T^{\varepsilon}
 \ll \Delta T^{1+\varepsilon}. \qedhere
\end{equation}
\end{proof}

In light of Lemma \ref{lemma:smallCBound}, for the rest of the paper we can assume that
\begin{equation}
\label{eq:ClowerBound}
C \gg \frac{N^2}{T^2} T^{\varepsilon}.
\end{equation}
\begin{mylemma}
\label{lemma:vIsDyadicallyLocated}
Suppose \eqref{eq:ClowerBound} holds, and let
\begin{equation}
\label{eq:V0size}
V_0 = \frac{TC}{N^2}.
\end{equation}
Then with $x = \frac{4 \pi mn}{c} \asymp \frac{N^2}{C}$, we have
\begin{equation}
\label{eq:K+dyadicLocal}
K_{+}(x) = \int_{v \asymp V_0} 
e^{ix(\cosh(v) - 1) - 2iTv} g(\Delta v) \eta(v) dv + O((xT)^{-100}),
\end{equation}
where $\eta$ is a $1$-inert function supported on $v \asymp V_0$.
\end{mylemma}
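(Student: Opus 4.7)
Proof plan. The plan is to identify the unique stationary point of $\phi(v) = x(\cosh v - 1) - 2Tv$ and localize the integral defining $K_+(x)$ around it via a smooth partition of unity, bounding the complementary integral via integration by parts. Setting $\phi'(v_0) = 0$ yields $\sinh(v_0) = 2T/x = 2V_0$; since hypothesis \eqref{eq:ClowerBound} gives $V_0 \gg T^{-1+\varepsilon}$, the stationary point lies at $v_0 \asymp V_0$, and the key product $xV_0^2 = TV_0 \gg T^{\varepsilon}$ will drive all of the IBP savings below.

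Take $\eta$ to be a $1$-inert bump on scale $V_0$, supported on a wide dyadic annulus $v \in [V_0/K, K V_0]$ and identically equal to $1$ in a smaller annulus containing $v_0$, so that
\[
K_+(x) = \int e^{i\phi(v)} g(\Delta v) \eta(v)\,dv + E
\]
where the first term is the RHS of the lemma. To estimate the remainder $E = \int e^{i\phi(v)} g(\Delta v)(1 - \eta(v))\,dv$, dyadically decompose $(1-\eta(v))g(\Delta v)$ by scale $|v| \asymp V$ (noting that $g(\Delta v)$ already restricts $|v| \ll T^\varepsilon/\Delta$) and apply Lemma \ref{lemma:exponentialintegral} to each piece with $Z = V$, $X = 1$, and $Y = xV^2$. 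For $v > K V_0$ we have $|\phi'(v)| \asymp x \sinh v \asymp xV$, which matches $Y/Z$; for $0 < v < V_0/K$ and for $v < 0$ we instead have $|\phi'(v)| \asymp T$, which exceeds $xV$ since $V \ll V_0$. In both cases the gain factor is $R = xV^2$, and whenever $V \gg x^{-1/2}T^{\varepsilon/2}$ we have $R \gg T^{\varepsilon}$, so iterating IBP produces super-polynomial decay in $T$ which we sum over the $O(\log T)$ dyadic scales.

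The main obstacle is the narrow residual range $V \ll x^{-1/2}$, in which $R < 1$ and Lemma \ref{lemma:exponentialintegral} does not apply directly. This range lies strictly below $V_0$, since $V_0 \gg x^{-1/2} T^{\varepsilon/2}$ owing to $x \ll T^{2-\varepsilon}$, and is therefore well outside any reasonable support of $\eta$. Its contribution to $E$ is bounded trivially by $O(x^{-1/2})$ after dyadic summation; by choosing the constant $K$ in the support of $\eta$ sufficiently large to absorb these scales (equivalently, extending the cutoff a bit beyond the naive dyadic annulus), this remaining piece can be incorporated into the main term so that the complementary integral $E$ is indeed super-polynomially small in $xT$, yielding the claimed bound $O((xT)^{-100})$.
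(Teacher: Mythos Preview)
Your localization for $|v| \gg V_0$ is fine and matches the paper: dyadic pieces with $Z = V \geq 100 V_0$ and $Y = R = xV^2 \gg xV_0^2 = TV_0 \gg T^{\varepsilon}$ give arbitrary polynomial saving via Lemma~\ref{lemma:exponentialintegral}.

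The gap is in the region $|v| \leq V_0/K$. Your dyadic scheme there takes $Y = xV^2$, so the gain parameter is $R = xV^2$, and you correctly note this fails once $V \ll x^{-1/2}$. Your proposed remedy does not work. To ``absorb these scales'' into the support of $\eta$ you would need $V_0/K \leq x^{-1/2} T^{\varepsilon/2}$, i.e.\ $K \geq V_0 x^{1/2} T^{-\varepsilon/2} \asymp T\sqrt{C}/N \cdot T^{-\varepsilon/2} \gg 1$ by \eqref{eq:ClowerBound}; in fact $K$ must be at least $T^{\varepsilon/2}$, not a constant, so $\eta$ would no longer be supported on a single dyadic interval and hence not $1$-inert in the sense of Definition~\ref{inert}. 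The alternative, leaving the piece in $E$ with the trivial bound $O(x^{-1/2})$, is far too weak for the claimed $O((xT)^{-100})$.

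The paper handles $|v| \leq V_0/100$ in one block (no further dyadic decomposition) by invoking the more general \cite[Lemma~8.1]{BKY}, of which Lemma~\ref{lemma:exponentialintegral} is the dyadically-localized special case. The point is that on this whole interval one has $|\phi'(v)| \asymp T$ and $|\phi''(v)| \asymp x$ simultaneously, while the weight $g(\Delta v)(1-\eta(v))$ varies at scale $V_0$; a direct repeated integration by parts then gains a factor $(TV_0)^{-1}$ each time (since both $w'/\phi'$ and $w\phi''/(\phi')^2$ are $O((TV_0)^{-1})$ relative to $w$), and $TV_0 \gg T^{\varepsilon}$ by \eqref{eq:ClowerBound}. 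Dyadically decomposing this central interval throws away the information that the weight is smooth at scale $V_0$ rather than at the (much smaller) scale $V$, which is why your $R$ degrades.
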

Before proving Lemma \ref{lemma:vIsDyadicallyLocated}, we record a simple consequence of it which follows from inserting \eqref{eq:K+dyadicLocal} into \eqref{eq:Ikellcdef} (valid under the assumption \eqref{eq:ClowerBound}, which is in effect):
\begin{equation}
\label{eq:IkellAftervIsDyadicallyLocated}
I(k,\ell,c) = \int_{v \asymp V_0} 
e^{ix(\cosh(v) - 1) - 2iTv} g(\Delta v) \eta(v) I(A, B, U, \epsilon(v), N) dv + O(T^{-50}).
\end{equation}

\begin{proof}
In the definition of $K_{+}(x)$ given by \eqref{eq:K+formula}, we first apply a smooth dyadic partition of unity to the region $100 V_0 \leq |v| \ll \Delta^{-1} T^{\varepsilon} = o(1)$.  Consider a  piece of this partition, with say $Z \leq |v| \leq 2Z$.  We may apply Lemma \ref{lemma:exponentialintegral} with both $Y$ and $R$ taking the value $x Z^2$ (and $x \asymp \frac{N^2}{C}$).  Note $x Z^2 \gg \frac{N^2 V_0^2}{C} \gg T^{\varepsilon}$, so any such dyadic piece is very small.

Next we consider the portion of the integral with $|v| \leq \frac{V_0}{100}$.  The version of the integration by parts bound stated in Lemma \ref{lemma:exponentialintegral} is a simplified variant of \cite[Lemma 8.1]{BKY} (localized to a dyadic interval, etc.) which does not directly apply.  However, the more general \cite[Lemma 8.1]{BKY} can be used to show that this portion of the integral is also small.  The statement of \cite[Lemma 8.1]{BKY} contains a list of parameters (not to be confused with the notation from this paper) $(X, U, R, Y, Q)$ which in our present context take the values $(1, V_0, T, N^2/C, 1)$.  Lemma 8.1 from \cite{BKY} is sufficient to show the integral is very small provided $\frac{QR}{\sqrt{Y}} \rightarrow \infty$ and $RU \rightarrow \infty$.  Here $QR/\sqrt{Y}$ takes the form $\frac{T \sqrt{C}}{N} \gg T^{\varepsilon/2}$, and $RU = V_0 T \gg T^{\varepsilon}$, using the assumption \eqref{eq:ClowerBound}.
The remaining part of the integral is displayed in \eqref{eq:K+dyadicLocal}.

\end{proof}

\begin{mylemma}
\label{lemma:IkellcAsymptoticForUBig}
Suppose that the conditions of Theorem \ref{thm:mainthm} hold, as well as \eqref{eq:cUpperBound}.
Then
\begin{equation}
I(k,\ell,c) = \frac{N C^{1/2}}{U} \Big(\frac{k}{\ell}\Big)^{iU} 
\exp\Big(-\frac{2\pi i T^2 k \ell}{U^2 c} \Big)
W(\cdot) + O(T^{-100}),
\end{equation}
where $W$ is $1$-inert (in $k$, $\ell$, and $c$, as well as all suppressed variables), and supported on
\begin{equation}
\label{eq:kandellSizesWhenUisBig}
k \asymp \ell \asymp \frac{CU}{N}.
\end{equation}
\end{mylemma}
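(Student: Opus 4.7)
The plan is to execute a double stationary phase argument. Starting from \eqref{eq:IkellAftervIsDyadicallyLocated}, I first apply Lemma \ref{lemma:Iexpansion1} to the inner $(x,y)$-integral $I(A,B,U,\epsilon(v),N)$, then apply Lemma \ref{lemma:exponentialintegralStatPhase} to the outer $v$-integral against the resulting oscillatory factor.

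The first step is to verify the hypothesis $\epsilon(v) N^2 = o(U)$ of Lemma \ref{lemma:Iexpansion1} under the present constraints. Since $v \asymp V_0 = TC/N^2$, we have $\epsilon(v) \asymp V_0^2/c \asymp T^2C/N^4$; combined with the effective upper bound $C \ll N^2/(\Delta T) \cdot T^\varepsilon$ (forced by the non-negligibility of $K_+$) we obtain $\epsilon N^2/U \ll T/(U\Delta)\cdot T^\varepsilon$. The hypothesis $U \ge T^{3/2+\delta}/\Delta^{3/2}$ combined with $\Delta \le T^{1-\varepsilon}$ yields $U\Delta \gg T^{1+\delta/2}$, so $\epsilon N^2/U = O(T^{-\delta/2})$ as required. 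Lemma \ref{lemma:Iexpansion1} then shows that $I(A,B,U,\epsilon(v),N)$ is negligible unless $|k| \asymp |\ell| \asymp CU/N$ (the positivity of the stationary point in $(x,y)$-space forcing $k,\ell > 0$), and in that range it equals $\frac{N^2}{U} e^{i\phi(x_0,y_0)} W_0 + O(N^2 U^{-100})$, with $W_0$ a $1$-inert function of the variables $k,\ell,c,v$ at their respective dyadic scales.

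Next, substituting the asymptotic \eqref{eq:phiTaylor} with $\delta(v) = (\cosh v - 1)cU/(\pi k\ell) \asymp V_0^2 N^2/(CU) = O(T^{-\delta})$, the outer $v$-integral acquires the total phase
\begin{equation*}
\Phi(v) = -2Tv + U\log(k/\ell) + \frac{(\cosh v - 1) c U^2}{\pi k\ell} + (\text{bounded 1-inert terms}).
\end{equation*}
All higher-order terms $U\delta^{2j+1}$ with $j\ge 1$, as well as the $v^4$ and higher corrections in the expansion of $\cosh v - 1$, produce $O(1)$ contributions with $V_0$-controlled $v$-derivatives and are absorbed into the amplitude. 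Using $\cosh v - 1 = v^2/2 + (\text{inert})$ in the main phase, the stationary point lies at $v_0 = 2\pi Tk\ell/(cU^2)$, and the condition $v_0 \asymp V_0$ is precisely \eqref{eq:kandellSizesWhenUisBig}. A direct calculation shows
\begin{equation*}
\Phi(v_0) = U\log(k/\ell) - \frac{2\pi T^2 k\ell}{cU^2},
\end{equation*}
and $\Phi''(v_0) \asymp U^2c/(k\ell) \asymp N^2/C$, so the ``$Y$'' parameter in Lemma \ref{lemma:exponentialintegralStatPhase} is $Y \asymp V_0^2 \cdot N^2/C = T^2C/N^2 \gg T^\varepsilon$ by \eqref{eq:ClowerBound}, giving ample room for the error term $O(T^{-100})$. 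Stationary phase contributes a factor $V_0/\sqrt{Y} = C^{1/2}/N$, which combines with the inner $N^2/U$ to yield the claimed size $NC^{1/2}/U$.

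The main technical obstacle is the bookkeeping of inertness through the two nested stationary phase applications. We must verify that the Taylor tails from Lemma \ref{lemma:Iexpansion1} indeed satisfy inert-type derivative bounds with respect to all four variables $k,\ell,c,v$ at their proper dyadic scales (not merely in $v$), so that they are legitimately absorbed into $W_0$; then the outer stationary phase (via the remark following Lemma \ref{lemma:exponentialintegralStatPhase}) transfers this inertness to $W(k,\ell,c)$. This amounts to checking that $\partial_v \delta$, $\partial_k \delta$, etc.\ satisfy $V_0 |\partial_v \delta| \ll \delta$, $k |\partial_k \delta| \ll \delta$, and so on, which is straightforward from $\delta = v^2 c U/(\pi k \ell)$ but must be tracked carefully to legitimately apply Lemma \ref{lemma:exponentialintegralStatPhase} in the variable $v$ while maintaining uniformity in $k,\ell,c$.
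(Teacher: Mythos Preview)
Your proposal is correct and follows essentially the same approach as the paper's proof: start from \eqref{eq:IkellAftervIsDyadicallyLocated}, apply Lemma \ref{lemma:Iexpansion1} to the inner $(x,y)$-integral (checking $\epsilon N^2 = o(U)$ via $U\Delta \gg T^{1+\delta}$), absorb the $j\ge 1$ Taylor tails and the $v^4$ correction of $\cosh v - 1$ into the inert weight, and then perform stationary phase in $v$. The paper carries out exactly these steps, with the only difference being that it records the needed size checks explicitly: it verifies $U(\epsilon U/(AB))^3 \ll T^3/(U^2\Delta^3) \ll T^{-2\delta}$ for the Taylor tail (using precisely the lower bound in \eqref{eq:mainthmcondition}) and $\frac{U^2 C V_0^4}{k\ell} \asymp C^3 T^4/N^6 \ll T/\Delta^3 \ll T^{-\delta'}$ for the $v^4$ correction (using the derived bound $\Delta \gg T^{1/3+\delta}$), where you simply assert these are $O(1)$ and inert. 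Filling in those two estimates makes your outline into the paper's proof.
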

\begin{proof}
We begin by making some simple deductions from the conditions of Theorem \ref{thm:mainthm}.  
First we note that \eqref{eq:mainthmcondition} directly implies $U \Delta \geq T^{1+\delta}$.
Since \eqref{eq:cUpperBound} holds, we additionally deduce 
\begin{equation}
\label{eq:CisSmallish}
C \ll \frac{U N^2}{T^2} T^{-\delta},
\end{equation}
for some $\delta > 0$.
Another consequence of \eqref{eq:mainthmcondition} is that
\begin{equation}
\label{eq:UandDeltaAreBiggish}
\frac{T^3}{U^2 \Delta^3} \ll T^{-2\delta}.
\end{equation}
From the fact that $U \ll T$, we also deduce that (for some $\delta > 0$)
\begin{equation}
\label{eq:DeltaLowerBoundOneThirdPower}
\Delta \gg T^{1/3+\delta}.
\end{equation}

Now we pick up with \eqref{eq:IkellAftervIsDyadicallyLocated}.
Using \eqref{eq:V0size}, the condition \eqref{eq:CisSmallish} means that $\frac{\epsilon N^2}{U} \asymp \frac{V_0^2 N^2}{CU} \asymp \frac{T^2 C}{UN^2} \ll T^{-\delta}$, so that the conditions of Lemma \ref{lemma:Iexpansion1} are met.  This gives an asymptotic formula for the inner integral $I(A,B, U, \epsilon(v), N)$ for all $v \asymp V_0$. 
In particular, we deduce that $I(k, \ell, c)$ is very small unless \eqref{eq:kandellSizesWhenUisBig} holds, a condition that we henceforth assume is in place.
Note that by \eqref{eq:ABsize}
\begin{equation}
\frac{\epsilon U}{AB} = \frac{ (\cosh v -1) Uc}{\pi k \ell } \asymp \frac{UC V_0^2}{k \ell}
\asymp \frac{UC (T C/N^2)^2 }{(CU/N)^2}
 = \frac{T^2 C}{U N^2} \ll \frac{T}{U \Delta } T^{\varepsilon}
,  
\end{equation}
since $k \asymp \ell \asymp \frac{CU}{N}$, $v \asymp V_0$, and $C \ll \frac{N^2}{\Delta T} T^{\varepsilon}$ (recalling \eqref{eq:cUpperBound}).  Therefore,
\begin{equation}
\label{eq:use-of-assumption} U \Big(\frac{\epsilon U}{AB}\Big)^3 \ll  U \Big(\frac{T}{U \Delta}\Big)^3 T^{\varepsilon} \ll \frac{T^3}{U^2 \Delta^3} T^{\varepsilon} \ll T^{-\delta'},
\end{equation}
for some $\delta'>0$.  This calculation shows that in \eqref{eq:phiTaylor}, the terms with $j \geq 1$ can be absorbed into the inert weight function.  This is where we use the condition \eqref{eq:mainthmcondition} which can likely be relaxed to $U \Delta \gg T^{1+\delta}$, since this condition is sufficient to show that \eqref{eq:phiTaylor} is a good asymptotic expansion.
Therefore,
\begin{equation}
 I(k,\ell,c) = \frac{N^2}{U} \Big(\frac{k}{\ell}\Big)^{iU} \int_{v \asymp V_0} \exp\Big(-2iTv + i\frac{U^2 c(\cosh v -1)}{\pi k \ell}\Big) W(v, \cdot) dv,
\end{equation}
plus a small error term, where $W(v, \cdot)$ is $1$-inert with respect to $k, \ell, c$, and all other suppressed variables.  Next we can apply $\cosh(v) - 1 = v^2/2 + O(v^4)$ and absorb the $v^4$ terms into the inert weight function, using \eqref{eq:cUpperBound} and \eqref{eq:DeltaLowerBoundOneThirdPower} as follows:
\begin{equation}
\frac{U^2 C V_0^4}{k \ell} \asymp \frac{C^3 T^4}{N^6} \ll \frac{T}{\Delta^3} T^{3\varepsilon} \ll T^{-\delta'}.
\end{equation}
Finally, by stationary phase we obtain the desired estimate.
\end{proof}

Next we simplify our expression for $I(k,\ell,c)$ under the conditions of Theorem \ref{thm:CentralValueBound}, when $U$ is small.
\begin{mylemma}
\label{lemma:IkellcAsymptoticForUSmall}
Suppose that the conditions of Theorem \ref{thm:CentralValueBound} hold, as well as 
\eqref{eq:ClowerBound}.  Then $I(k,\ell,c)$ is very small unless
\begin{equation}
\label{eq:kandellSizesWhenUisTiny}
-k \asymp \ell \asymp \frac{C^2 T^2}{N^3},
\end{equation}
in which case
\begin{equation}
\label{eq:IkellcAsymptoticWhenUisTiny}
I(k,\ell, c) = \frac{N^4}{CT^2} (-k/\ell)^{iU}
e_c(-k \ell/12)
\int_{v \asymp V_0}  e^{-2ivT + \frac{2\pi i k \ell}{cv^2}} W(v,\cdot) dv
+ O(T^{-100}),
\end{equation}
for some function $W(v, \cdot)$ that is $1$-inert with respect to $k$, $\ell$, $c$, and all other suppressed variables.
\end{mylemma}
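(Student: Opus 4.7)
The proof will follow the same general plan as the preceding Lemma \ref{lemma:IkellcAsymptoticForUBig}, starting from \eqref{eq:IkellAftervIsDyadicallyLocated}. The first task is to verify that Lemma \ref{lemma:Iexpansion2} (rather than Lemma \ref{lemma:Iexpansion1}) is the applicable stationary phase result. Since $\epsilon(v) \asymp v^2/c \asymp V_0^2/C = T^2 C/N^4$ for $v \asymp V_0$, we have $\epsilon N^2 \asymp T^2 C/N^2 \gg T^{\varepsilon}$ by \eqref{eq:ClowerBound}, while $U \ll T^{\varepsilon}$; thus $U/(\epsilon N^2) = o(1)$ as required. Part 1 of Lemma \ref{lemma:Iexpansion2} then gives that the integrand is negligible unless $|A| \asymp |B| \asymp \epsilon N$ with $A < 0$, $B > 0$. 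Translating via \eqref{eq:ABepsilondefs}, this says $|k| \asymp c \epsilon N \asymp C^2 T^2/N^3$ with $k < 0$ and $\ell > 0$, confirming \eqref{eq:kandellSizesWhenUisTiny}. Part 2 then yields the leading form $I(A,B,U,\epsilon,N) = \epsilon^{-1} e^{i\phi(x_0,y_0)} W(\cdot) + O(\cdots)$ with $\epsilon^{-1} \asymp N^4/(T^2 C)$, which will produce the target outer prefactor.

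The real content is the asymptotic expansion of $\phi(x_0,y_0)$. Using \eqref{eq:phiTaylor2} with $U\varepsilon/AB \asymp U/(\epsilon N^2) \ll T^{-\delta}$, the $j\ge 1$ terms contribute at most $U^2 \epsilon/(AB) \ll T^{-\delta}$ to the phase and can therefore be absorbed into the inert weight $W$. This reduces us to the leading phase $AB/\epsilon + U\log(-A/B)$. Substituting $A=2\pi k/c$, $B=2\pi\ell/c$, and $\epsilon=4\pi(\cosh v-1)/c$ gives
\begin{equation*}
\frac{AB}{\epsilon} = \frac{\pi k\ell}{c(\cosh v-1)}, \qquad U\log(-A/B) = U\log(-k/\ell).
\end{equation*}
The expansion $(\cosh v - 1)^{-1} = 2/v^2 - 1/6 + O(v^2)$ then produces
\begin{equation*}
\frac{AB}{\epsilon} = \frac{2\pi k\ell}{cv^2} - \frac{\pi k\ell}{6c} + O\!\left(\frac{v^2 k\ell}{c}\right).
\end{equation*}
The first term is the $v$-dependent phase $2\pi k\ell/(cv^2)$ appearing in the integrand of \eqref{eq:IkellcAsymptoticWhenUisTiny}; the $v$-independent second term exponentiates to $e_c(-k\ell/12)$ and pulls outside the integral; the third term has magnitude $\asymp V_0^2 k\ell/c \asymp T^6 C^5/N^{10}$, and using $C \ll N^2/(\Delta T)\, T^{\varepsilon}$ from \eqref{eq:cUpperBound} together with $\Delta \ge T^{1/5+\varepsilon}$, one checks this is $\ll T/\Delta^5 \cdot T^{5\varepsilon} \ll T^{-\delta'}$, hence absorbable into the inert weight. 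The factor $U\log(-k/\ell)$, being $v$-independent, exponentiates to the prefactor $(-k/\ell)^{iU}$. Collecting these pieces yields \eqref{eq:IkellcAsymptoticWhenUisTiny}.

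The main obstacle is the last bookkeeping step: verifying that the cubic remainder $O(v^2 k\ell/c)$ in the expansion of $\phi(x_0,y_0)$ is genuinely $o(1)$ on the support $v\asymp V_0$ under the hypotheses of Theorem \ref{thm:CentralValueBound}. This is precisely where the threshold $\Delta \ge T^{1/5+\varepsilon}$ enters the argument; a wider range of $\Delta$ would force us to keep this term as a genuine phase and carry out a more elaborate stationary phase analysis (as remarked after Lemma \ref{lemma:Iexpansion2}). A secondary technical point is ensuring uniformity: the derivatives of the absorbed error in the variables $k,\ell,c$ (and in $v$, on the scale $V_0$) must respect the $1$-inert bounds, which is a direct consequence of the inert structure provided by Lemma \ref{lemma:Iexpansion2}.
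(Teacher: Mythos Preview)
Your proof is correct and follows essentially the same approach as the paper: invoke Lemma~\ref{lemma:Iexpansion2} after checking $U/(\epsilon N^2)=o(1)$ via \eqref{eq:ClowerBound}, absorb the $j\ge 1$ terms of \eqref{eq:phiTaylor2} using $U^2\epsilon/(AB)\ll T^{-\varepsilon}$, and then Taylor-expand $(\cosh v-1)^{-1}$ to extract the $v$-independent factor $e_c(-k\ell/12)$ while absorbing the $O(v^2)$ remainder via $T/\Delta^5\ll T^{-\delta'}$. The paper's proof is effectively identical, with the same sequence of estimates and the same use of $\Delta\ge T^{1/5+\varepsilon}$ at the final absorption step.
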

\begin{rema}
\normalfont Although it is possible to also evaluate the asymptotic of the $v$-integral in \eqref{eq:IkellcAsymptoticWhenUisTiny}, we prefer to save this step for later, in Section \ref{section:MellinInversion}.
\end{rema}
\begin{proof}
We again pick up with \eqref{eq:IkellAftervIsDyadicallyLocated} (recall also the definition \eqref{eq:Idef}), which takes the form
\begin{equation}
I(k,\ell,c) = \int_{v \asymp V_0} \eta(v) g(\Delta v) e^{-2ivT} I\Big(\frac{2 \pi k}{c}, \frac{2 \pi \ell}{c}, U, \epsilon, N\Big) dv,
\end{equation}
with $\epsilon=\epsilon(v) = 4\pi \frac{\cosh(v) - 1}{c} \asymp \frac{V_0^2}{C} \asymp \frac{C T^2}{N^4}$, for all $v \asymp V_0$.
Since \eqref{eq:ClowerBound} holds, this means that $\frac{U}{\epsilon N^2} \asymp \frac{U N^2}{T^2 C} \ll T^{-\varepsilon}$, so that the conditions of Lemma \ref{lemma:Iexpansion2} are met.  This directly implies that $I(k,\ell,c)$ is very small unless \eqref{eq:kandellSizesWhenUisTiny} holds.
Note that
\begin{equation}
\frac{AB}{\epsilon} = \frac{\pi k \ell}{c (\cosh v -1)},
\qquad
\text{and}
\qquad 
\Big|\frac{AB}{\epsilon} \Big| \Big(\frac{U \epsilon}{AB}\Big)^2 = \Big|\frac{U^2 \epsilon}{AB} \Big| \asymp \frac{U^2 N^2}{CT^2} \ll T^{-\varepsilon}.
\end{equation}
The latter calculation shows that the terms with $j \geq 1$ in \eqref{eq:phiTaylor2} may be absorbed into the inert weight function.  We thus conclude that
\begin{equation}
I(k,\ell, c) = \frac{N^4}{CT^2} (-k/\ell)^{iU}
\int_{v \asymp V_0}  e^{-2ivT +  \frac{\pi i k \ell}{c(\cosh v - 1)}} W(v,\cdot) dv
+ O(T^{-100}).
\end{equation}
Finally we observe the Taylor/Laurent approximation
\begin{equation}
\frac{1}{\cosh v -1} = \frac{2}{v^2} - \frac{1}{6} + O(v^2),
\end{equation}
and that
\begin{equation}
\frac{k \ell}{c} v^2 \asymp \frac{C^5 T^6}{N^{10}} \ll \frac{T}{\Delta^5} T^{\varepsilon} \ll T^{-\delta'},
\end{equation}
for some $\delta'>0$, where we have used $C \ll \frac{N^2}{\Delta T} T^{\varepsilon}$ from \eqref{eq:cUpperBound}.  This lets us absorb the lower-order terms in the Taylor expansion into the inert weight function.  Therefore, \eqref{eq:IkellcAsymptoticWhenUisTiny} holds.
\end{proof}

\section{Mellin inversion}
\label{section:MellinInversion}
We recall that we have the expression \eqref{eq:SH+formula3}, in which is contained a smooth (yet oscillatory) weight function of the form 
\begin{equation}
f(k,\ell,c) = e_c(k \ell/4) I(k,\ell,c).
\end{equation}
In the conditions of Theorem \ref{thm:mainthm}, we have that $I$ is given by Lemma \ref{lemma:IkellcAsymptoticForUBig}, while in the conditions of Theorem \ref{thm:CentralValueBound}, we have that $I$ is given by Lemma \ref{lemma:IkellcAsymptoticForUSmall}.  In both cases, the function $f$ is very small except when $k$ and $\ell$ are fixed into dyadic intervals.  We may therefore freely insert an inert weight function that enforces this condition.

First consider the setting relevant for Theorem \ref{thm:mainthm}.  The function $f$ has phase as given in Lemma \ref{lemma:IkellcAsymptoticForUBig}, modified to include $e_c(k \ell/4)$ which is  strictly smaller in size due to the assumption $U \leq (2-\delta)T$.  
We apply Lemma \ref{lemma:mellinIntegralOfAdditiveCharacter} to the phase function, and apply Mellin inversion to the inert part.  We therefore obtain
\begin{multline}
\label{eq:fkellcUBig}
f(k,\ell,c) = \frac{\Phi}{\sqrt{P}} \Big(\frac{2^{\nu} k_o'}{2^{\gamma} \ell_o'}\Big)^{iU}
\int_{-t \asymp P} \int \int \int
\Big(\frac{T^2 g_1^2 g_2^2 k_o' \ell_o'}{U^2 q r_1^2 r_2^2 2^{\lambda-\nu-\gamma}}\Big)^s \Big(1-\frac{U^2}{4T^2}\Big)^s v(t)
\widetilde{w}(u_1, u_2, u_3)
\\
\Big(\frac{C}{q r_1^2 r_2^2 2^{\lambda}}\Big)^{u_1}
\Big(\frac{K}{k_o' g_1 g_2 2^{\nu}}\Big)^{u_2}
\Big(\frac{K}{\ell_o' g_1 g_2 2^{\gamma}} \Big)^{u_3}
du_1 du_2 du_3 ds,
\end{multline}
plus a small error term, where $s=it$, and where
\begin{equation}
\label{eq:parameterDefsLargeU}
\Phi = \frac{N \sqrt{C}}{U},
\qquad
P = \frac{C T^2}{N^2},
\qquad
K = \frac{CU}{N}.
\end{equation}
By standard Mellin inversion of an inert function, the function $\widetilde{w}$ is entire and has rapid decay on any vertical line. However we do not specify the vertical contour in the integral above (and in several instances below). Also we have absorbed constants such as $\frac{1}{2\pi i}$ and the like into the weight functions.
We recall that $k = 2^{\nu} g_1 g_2 k_o'$, $\ell = 2^{\gamma} g_1 g_2 \ell_o'$, and $c = 2^{\lambda} q r_1^2 r_2^2$.  
We recall from Lemma \ref{lemma:mellinIntegralOfAdditiveCharacter} that $v(t)$ is supported on $-t \asymp P$, is $O(1)$, and has phase $e^{-it \log(|t|/e)}$.

We can also apply these steps to $I$ given by Lemma \ref{lemma:IkellcAsymptoticForUSmall}, which will have a similar structure but with an extra $v$-integral.  We obtain 
\begin{multline}
\label{eq:fkellcSmallU}
f(k,\ell,c) = \frac{\Phi_0}{\sqrt{P}} 
\int_{v \asymp V_0} e^{-2ivT} 
\Big(\frac{-2^{\nu} k_o'}{2^{\gamma} \ell_o'}\Big)^{iU}
\int_{-t \asymp P} \int \int \int
\Big(\frac{g_1^2 g_2^2 |k_o'| \ell_o'}{q r_1^2 r_2^2 2^{\lambda-\nu-\gamma}}\Big)^s \Big(\frac{1}{v^2} + \frac{1}{6} \Big)^s v(t)
\widetilde{w}(u_1, u_2, u_3)
\\
\Big(\frac{C}{q r_1^2 r_2^2 2^{\lambda}}\Big)^{u_1}
\Big(\frac{K}{|k_o'| g_1 g_2 2^{\nu}}\Big)^{u_2}
\Big(\frac{K}{\ell_o' g_1 g_2 2^{\gamma}} \Big)^{u_3}
du_1 du_2 du_3 ds dv,
\end{multline}
plus a small error term, where this time
\begin{equation}
\label{eq:parameterDefsSmallUprotoVersion}
\Phi_0 = \frac{N^4}{C T^2},
\qquad
P = \frac{C T^2}{N^2},
\qquad
K = \frac{C^2 T^2}{N^3},
\qquad
V_0 = \frac{CT}{N^2}.
\end{equation}
Here, $\widetilde{w}(u_1, u_2, u_3)$ is implicitly an inert function of $v$. It is the Mellin transform (in the suppressed variables, but not in $v$) of the function $W(v,\cdot)$  which was introduced in Lemma \ref{lemma:IkellcAsymptoticForUSmall}.

At this point, we finally asymptotically evaluate the $v$-integral.
We are considering 
\begin{equation}
\int_{v \asymp V_0} e^{-2ivT - 2s \log{v} + s\log(1+\frac{v^2}{6})} W(v,\cdot) dv,
\end{equation}
where we recall $s=it$, $-t \asymp P$.  We first observe that $s \log(1+\frac{v^2}{6}) = s v^2/6 + O(sv^4)$, and note 
\begin{equation}
|sv^4| \asymp P V_0^4 \ll \frac{T^{1+\varepsilon}}{\Delta^5} \ll T^{-\delta},
\end{equation}
by the assumption $\Delta \gg T^{1/5+\varepsilon}$.  Therefore, the term with $sv^4$ can be absorbed into the inert weight function at no cost.  We are therefore considering an oscillatory integral with phase $\phi(v) = -2vT - 2t \log{v} + t v^2/6$.  It is easy to see that $|\phi''(v)| \asymp \frac{P}{V_0^2}$ throughout the support of the test function, and that there exists a stationary point at $v_0$ satisfying
\begin{equation}
-2T - \frac{2t}{v_0} + \frac{t v_0}{3} = 0.
\end{equation}
We explicitly calculate
\begin{equation}
\label{eq:v0formula}
v_0 = \frac{2T - 2T \sqrt{1+ \frac{2 t^2}{3T^2}}}{2t/3} = \frac{-t}{T} + a' \frac{t^3}{T^3} + O\Big(\frac{P^5}{T^5}\Big),
\end{equation}
for some constant $a'$. We observe that $\frac{P^5}{T^4} \ll \frac{T^{1+\varepsilon}}{\Delta^5} \ll T^{-\delta}$, so quantities of this size (or smaller) may be safely discarded.
For later use, we note in passing that $\frac{P^2}{T^2} \ll \frac{T^{\varepsilon}}{\Delta^2} \ll T^{-\delta}$.
We conclude
\begin{equation}
\phi(v_0) = -2 t \log(|s|/T) + 2t + a \frac{t^3}{T^2} +O\Big(\frac{P^5}{T^4}\Big),
\end{equation}
for some new constant $a$.  Therefore,
\begin{equation}
\int_{v \asymp V_0} e^{-2ivT - 2 it \log{v} + it \log(1+\frac{v^2}{6})} w(v, \cdot) dv
= \frac{V_0}{\sqrt{P}} e^{-2it \log(\frac{|t|}{eT})} e^{ia \frac{t^3}{T^2}} W(\cdot),
\end{equation}
for some inert function $W$ and constant $a$.  Therefore, we deduce a formula for $f$ in the form
\begin{multline}
\label{eq:fkellcSmallU2}
f(k,\ell,c) = \frac{\Phi}{\sqrt{P}} 
\Big(\frac{-k_o'}{\ell_o'}\Big)^{iU}
\int_{-t \asymp P} \int \int \int
\Big(\frac{g_1^2 g_2^2 |k_o'| \ell_o'}{q r_1^2 r_2^2 2^{\lambda-\nu-\gamma}}\Big)^s v(t)
e^{-2it \log(\frac{|t|}{eT}) + ia\frac{t^3}{T^2}}
\widetilde{w}(u_1, u_2, u_3)
\\
\Big(\frac{C}{q r_1^2 r_2^2 2^{\lambda}}\Big)^{u_1}
\Big(\frac{K}{|k_o'| g_1 g_2 2^{\nu}}\Big)^{u_2}
\Big(\frac{K}{\ell_o' g_1 g_2 2^{\gamma}} \Big)^{u_3}
du_1 du_2 du_3 ds dv,
\end{multline}
where now
\begin{equation}
\label{eq:parameterDefsSmallU}
\Phi = \frac{N^4 V_0}{C T^2 P^{1/2}} = \frac{N^3}{C^{1/2} T^2},
\qquad
P = \frac{C T^2}{N^2},
\qquad
K = \frac{C^2 T^2}{N^3},
\qquad
V_0 = \frac{CT}{N^2}.
\end{equation}
This expression for $f(k,\ell,c)$ is similar enough to \eqref{eq:fkellcUBig} that we can proceed in parallel.  We mainly focus on the proof of Theorem \ref{thm:mainthm}.

Inserting \eqref{eq:fkellcUBig} into \eqref{eq:SH+formula4}, we obtain
\begin{multline}
\label{eq:SHFormulaInTermsOfZfunction}
\mathcal{S}_{\eta}(H_{+}) = 
\sum_C \frac{\Delta T}{NC^{3/2}} 
\frac{\Phi}{\sqrt{P}}
\int_{-t \asymp P} \int \int \int
\Big(\frac{T^2}{U^2} - \frac{1}{4}\Big)^s  v(t)
\widetilde{w}(u_1, u_2, u_3) 
\\
C^{u_1} K^{u_2 + u_3} 
Z(s,u_1,u_2,u_3)
du_1 du_2 du_3 ds,
\end{multline}
where $Z = Z_{\eta}$ is defined by
\begin{multline}
\label{eq:Zdef}
Z(s, u_1, u_2, u_3) =
\sum_{\substack{\nu, \gamma \geq 2, \thinspace \lambda \geq 4 \\ \min(\lambda, \nu) = \min(\lambda, \gamma) \\ \text{one of  \eqref{eq:alphabetalines} holds}}} 
\frac{(2^{\nu}, 2^{\lambda-2-\delta})}{2^{\lambda(u_1+s) + \nu(u_2-iU-s) + \gamma(u_3+iU-s)}}
\sum_{\substack{ (r_1 r_2, 2) = 1}}
\sum_{\substack{g_1 |r_1^2 \\ g_2 | r_2^2}} 
\\
\sumstar_{\substack{q: r_1 | q^{\infty} \\ (q,2r_2) = 1}}
\sum_{\substack{(k_o' \ell_o', 2)=1 \\ (k_o' \ell_o', \frac{r_2^2}{g_2}) = 1}}
\frac{\Big(\frac{q}{k_o' \ell_o'}\Big) \eta_1(k_o') \eta_2(\ell_o') \eta_3(q)
\prod_{\substack{p | r_2, \thinspace p \nmid \frac{r_2^2}{g_2}}} (1-p^{-1})}
	{(k_o')^{u_2-iU -s} (\ell_o')^{u_3+iU-s} q^{u_1+s} (r_1^2 r_2^2)^{u_1+s} (g_1 g_2)^{u_2+u_3-2s-1}}.
\end{multline}
We initially suppose that $\mathrm{Re}(s) = 0$ and $\mathrm{Re}(u_i) = 2$ for each $i$, securing absolute convergence of the sum.  An obvious modification, using \eqref{eq:fkellcSmallU2} in place of \eqref{eq:fkellcSmallU}, gives the corresponding formula for $U$ small, 
namely
\begin{multline}
\label{eq:SHFormulaInTermsOfZfunctionSmallU}
\mathcal{S}_{\eta}(H_{+}) = 
\sum_C \frac{\Delta T}{NC^{3/2}} 
\frac{\Phi}{\sqrt{P}}
\int_{-t \asymp P} \int \int \int
e^{-2it \log(\frac{|t|}{eT}) + ia\frac{t^3}{T^2}}  v(t)
\widetilde{w}(u_1, u_2, u_3) 
\\
C^{u_1} K^{u_2 + u_3} 
Z(s,u_1,u_2,u_3)
du_1 du_2 du_3 ds,
\end{multline}
where the parameters correspond with \eqref{eq:parameterDefsSmallU}, and the formula for $Z$ is slightly different (multiplied by $\eta_1(-1)$ to account for changing variables $k_o' \rightarrow -k$, with $k \geq 1$).

\section{Properties of the Dirichlet series $Z$}
In this section, we pause the development of $S_{\eta}(H_{+})$ and entirely focus on the Dirichlet series $Z$. 
\subsection{Initial factorization}
Throughout this section we assume that $\mathrm{Re}(s) = 0$.
For simplicity of notation only, we also take $\eta = (\eta_1, \eta_2, \eta_3)$ to be trivial, as the same proof works in the general case.

\begin{mydefi}
 Let $\mathcal{D}_0$ be the set of $(s,u_1, u_2, u_3) \in \mc^4$ with $\mathrm{Re}(s) = 0$, and
 \begin{equation}
\label{eq:ZdomainAbsConvergence}
\mathrm{Re}(u_1) > 1, \qquad \mathrm{Re}(u_2) > 1, \qquad \mathrm{Re}(u_3) >1.
\end{equation}
\end{mydefi}
It is easy to see that the multiple sum \eqref{eq:Zdef} defining $Z$ converges absolutely on $\mathcal{D}_0$.
We will work initially in $\mathcal{D}_0$, and progressively develop analytic properties (meromorphic continuation, bounds, etc.) to  larger regions.  
The largest domain in which we work is the following
\begin{mydefi}
 Let $\mathcal{D}_{\infty}$ be the set of $(s,u_1, u_2, u_3) \in \mc^4$ with $\mathrm{Re}(s) = 0$, and
 \begin{equation}
\label{eq:Dinfinity}
\mathrm{Re}(u_2) > 1/2, \qquad \mathrm{Re}(u_3) > 1/2, \qquad \mathrm{Re}(u_1) + \min(\mathrm{Re}(u_2), \mathrm{Re}(u_3)) >1.
\end{equation}
\end{mydefi}
Obviously, $\mathcal{D}_0 \subset \mathcal{D}_{\infty}$.

The following notation  will be useful throughout this section.  Suppose that $\mathcal{D}$ is a subset of $(s,u_1,u_2,u_3) \in \mc^{4}$ defined by $\mathrm{Re}(s) = 0$ and 
 by finitely many equations of the form $L(\mathrm{Re}(u_1), \mathrm{Re}(u_2), \mathrm{Re}(u_3)) > c$ where $c \in \mr$ and $L$ is linear with nonnegative coefficients.
For $\sigma > 0$, define $\mathcal{D}^{\sigma}$ by replacing each such equation by $L(\mathrm{Re}(u_1), \mathrm{Re}(u_2), \mathrm{Re}(u_3)) \geq c + \sigma$.  The nonnegativity condition means $\mathcal{D}^{\sigma} \subseteq \mathcal{D}$ for any $\sigma > 0$.

As a notational convenience, we write $k$ and $\ell$ instead of $k_0'$ and $\ell_0'$ in \eqref{eq:Zdef} (since there should be no danger of confusion with the original $k$ and $\ell$ variables).  In the domain $\mathcal{D}_0$, we may take the sums over $k$ and $\ell$ to the outside, giving 
\begin{equation}
\label{eq:Zsumwithkandellonoutside}
Z(s,u_1,u_2,u_3) = Z^{(2)}(s,u_1,u_2, u_3)
\sum_{(k \ell, 2) = 1}
\frac{Z_{k, \ell}(s,u_1,u_2,u_3)}{k^{u_2-iU-s} \ell^{u_3+iU-s}}
,
\end{equation}
where
\begin{equation}
Z_{k, \ell}(s,u_1,u_2,u_3)
=
\sum_{\substack{( r_1 r_2, 2) = 1 }}
\sum_{\substack{g_1 |r_1^2 \\ g_2 | r_2^2 \\ (\frac{r_2^2}{g_2}, k \ell ) = 1}} 
\sumstar_{\substack{q: r_1 | q^{\infty} \\ (q,2r_2) = 1}}
\frac{\Big(\frac{q}{k \ell}\Big) 
\prod_{\substack{p | r_2, \thinspace p \nmid \frac{r_2^2}{g_2}}} (1-p^{-1})}
	{q^{u_1+s} (r_1^2 r_2^2)^{u_1+s} (g_1 g_2)^{u_2+u_3-2s-1}},
\end{equation}
and
\begin{equation}
\label{eq:Z2def}
Z^{(2)}(s, u_1, u_2, u_3) = \sum_{\substack{\nu, \gamma \geq 2, \thinspace \lambda \geq 4 \\ \min(\lambda, \nu) = \min(\lambda, \gamma) \\ \text{one of  \eqref{eq:alphabetalines} holds}}} 
\frac{(2^{\nu}, 2^{\lambda-2-\delta})}{2^{\lambda(u_1+s) + \nu(u_2-iU-s) + \gamma(u_3+iU-s)}}.
\end{equation}
We first focus on properties of $Z_{k, \ell}$, and then turn to $Z^{(2)}$.

\subsection{Continuation of $Z_{k, \ell}$}
Note that $Z_{k,\ell}$ has an Euler product, say $Z_{k,\ell} = \prod_{p \neq 2} Z_{k,\ell}^{(p)}$.
It is convenient to define
\begin{equation}
\label{eq:alphabetadef}
\alpha = u_2 + u_3 - 2s - 1, 
\qquad
\beta = u_1 + s.
\end{equation}
Note that \eqref{eq:ZdomainAbsConvergence} implies $\mathrm{Re}(\alpha) > 1$ and $\mathrm{Re}(\beta) > 1$.  It is also convenient to observe that 
\begin{equation}
\label{eq:DinfinityalphabetaDeduction}
(s, u_1, u_2, u_3) \in \mathcal{D}_{\infty}
\Longrightarrow 
\mathrm{Re}(2 \alpha + 2 \beta) > 1 \quad \text{and}
\quad 
\mathrm{Re}(\alpha + 2 \beta) > 1.
\end{equation}

We evaluate $Z_{k,\ell}^{(p)}$ explicitly as follows.
\begin{mylemma}
Suppose that $\mathrm{Re}(\beta) > 0$ and $\mathrm{Re}(\alpha + \beta) > 0$.
For $p \nmid 2 k \ell$, we have
\begin{equation}
\label{eq:Zkellpformula}
Z_{k, \ell}^{(p)}(s,u_1,u_2,u_3) =  \frac{1+p^{-\alpha - 2\beta} - p^{-1-2\alpha-2\beta} + \chi(p) p^{-1-2\alpha-3\beta}}{(1- \chi(p) p^{-\beta})(1-p^{-2\alpha-2\beta})},
 \end{equation}
 where $\chi(n) = \chi_{k \ell}(n) = (\frac{n}{k \ell})$.
For $p | k \ell$, we have
 \begin{equation}
 \label{eq:ZkellpformulaRamified}
Z_{k, \ell}^{(p)}(s,u_1,u_2,u_3) =  \frac{1-p^{-1-2\alpha - 2\beta}}{1-p^{-2 \alpha - 2\beta}}.
 \end{equation}
 \end{mylemma}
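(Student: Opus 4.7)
Every condition in the sum $Z_{k,\ell}$---squarefreeness of $q$, the divisibilities $r_1 \mid q^\infty$, $g_1 \mid r_1^2$, $g_2 \mid r_2^2$, and the coprimalities $(q, 2 r_2) = 1$ and $(r_2^2/g_2, k\ell) = 1$---factors multiplicatively over odd primes, as do the Jacobi symbol $\chi := \chi_{k\ell}$ (in $q$) and the product $\prod_{p \mid r_2,\ p \nmid r_2^2/g_2}(1 - p^{-1})$.  Hence $Z_{k,\ell}$ admits an Euler product $\prod_{p \ne 2} Z_{k,\ell}^{(p)}$, and each local factor is a sum over local tuples $(q_p, r_{1,p}, r_{2,p}, g_{1,p}, g_{2,p})$ weighted by $\chi(q_p)\, q_p^{-\beta}\, (r_{1,p} r_{2,p})^{-2\beta}\, (g_{1,p} g_{2,p})^{-\alpha}$, with $\alpha = u_2 + u_3 - 2s - 1$ and $\beta = u_1 + s$.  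The hypotheses $\mathrm{Re}(\beta) > 0$ and $\mathrm{Re}(\alpha + \beta) > 0$ are exactly what is needed for the resulting geometric series to converge absolutely.

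For $p \nmid 2 k \ell$, squarefreeness gives $q_p \in \{1, p\}$, the relation $r_1 \mid q^\infty$ forces $r_{1,p} > 1 \Rightarrow q_p = p$, and $(q, r_2) = 1$ forces $r_{2,p} > 1 \Rightarrow q_p = 1$, producing four mutually exclusive configurations: (A) $p \nmid q r_1 r_2$, contributing $1$; (B) $q_p = p$ alone, contributing $\chi(p) p^{-\beta}$; (C) $q_p = p$ with $r_{1,p} = p^b$ for some $b \ge 1$ and $g_{1,p} = p^j$ for $0 \le j \le 2b$; and (D) $r_{2,p} = p^a$ for some $a \ge 1$ with $g_{2,p} = p^j$ for $0 \le j \le 2a$, where the $(1 - p^{-1})$ weight appears precisely for $j = 2a$.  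Summing the nested geometric series and setting
\[
F := \sum_{b \ge 0} p^{-2b\beta} \sum_{j=0}^{2b} p^{-j\alpha} = 1 + \frac{1}{1 - p^{-\alpha}}\!\left[\frac{p^{-2\beta}}{1 - p^{-2\beta}} - \frac{p^{-3\alpha - 2\beta}}{1 - p^{-2\alpha - 2\beta}}\right],
\]
a direct calculation gives (A)+(B) $= 1 + \chi(p) p^{-\beta}$, (C) $= \chi(p) p^{-\beta}(F - 1)$, and (D) $= (F - 1) - p^{-1-2\alpha-2\beta}/(1 - p^{-2\alpha-2\beta})$, whence
\[
Z_{k,\ell}^{(p)} = (1 + \chi(p) p^{-\beta})\, F - \frac{p^{-1-2\alpha-2\beta}}{1 - p^{-2\alpha-2\beta}}.
\]
Clearing denominators in $F$ and using the identity $1 - p^{-\alpha} + p^{-\alpha-2\beta} - p^{-2\alpha-2\beta} = (1 - p^{-\alpha})(1 + p^{-\alpha-2\beta})$, the $(1 - p^{-\alpha})$ factor cancels, yielding $F = (1 + p^{-\alpha-2\beta})/[(1 - p^{-2\beta})(1 - p^{-2\alpha-2\beta})]$.

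Combining over a common denominator then gives
\[
Z_{k,\ell}^{(p)} = \frac{(1 + \chi(p) p^{-\beta})(1 + p^{-\alpha-2\beta}) - p^{-1-2\alpha-2\beta}(1 - p^{-2\beta})}{(1 - p^{-2\beta})(1 - p^{-2\alpha-2\beta})}.
\]
To match \eqref{eq:Zkellpformula}, I use $\chi(p)^2 = 1$ (valid since $p \nmid k\ell$) and the factorization $1 - p^{-2\beta} = (1 - \chi(p) p^{-\beta})(1 + \chi(p) p^{-\beta})$, and verify by direct polynomial expansion that the numerator above equals $(1 + \chi(p) p^{-\beta})\bigl[1 + p^{-\alpha-2\beta} - p^{-1-2\alpha-2\beta} + \chi(p) p^{-1-2\alpha-3\beta}\bigr]$; cancelling the common $(1 + \chi(p) p^{-\beta})$ factor yields \eqref{eq:Zkellpformula}.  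For the ramified case $p \mid k\ell$, the Jacobi symbol forces $q_p = 1$, hence $r_{1,p} = g_{1,p} = 1$; the extra constraint $(r_2^2/g_2, k\ell) = 1$ at $p$ forces $g_{2,p} = r_{2,p}^2$, reducing the sum to $1 + (1 - p^{-1})\sum_{a \ge 1} p^{-2a(\alpha + \beta)}$, which simplifies directly to \eqref{eq:ZkellpformulaRamified}.  The main obstacle is the algebraic cleanup for $p \nmid k\ell$: both the cancellation of $(1 - p^{-\alpha})$ and the subsequent cancellation of $(1 + \chi(p) p^{-\beta})$ are forced by the structure, but verifying them requires careful polynomial bookkeeping.
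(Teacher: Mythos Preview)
Your proof is correct and follows essentially the same approach as the paper: both compute the local Euler factor by summing geometric series over the $p$-adic valuations of $q,r_1,r_2,g_1,g_2$, arrive at the identical intermediate expression
\[
Z_{k,\ell}^{(p)}=\frac{(1+\chi(p)p^{-\beta})(1+p^{-\alpha-2\beta})-p^{-1-2\alpha-2\beta}(1-p^{-2\beta})}{(1-p^{-2\beta})(1-p^{-2\alpha-2\beta})},
\]
and then cancel the common factor $(1+\chi(p)p^{-\beta})$ using $1-p^{-2\beta}=(1-\chi(p)p^{-\beta})(1+\chi(p)p^{-\beta})$. Your four-case split (A)--(D) is just a slightly finer version of the paper's two-case split $r_2=0$ versus $r_2\ge 1$; the ramified computation is identical.
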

\begin{proof}
For $(p, 2k \ell) = 1$, we have, using the convention $\infty \cdot 0 = 0$, 
\begin{equation}
Z^{(p)}(\alpha,\beta)
= 
\sum_{\min(r_1, r_2) = 0}
\sum_{\substack{0 \leq g_1 \leq 2r_1 \\ 0 \leq g_2 \leq 2 r_2}}
 (1-p^{-1})^{\delta_{g_2 = 2r_2 > 0}}
\sum_{\substack{ 0 \leq q \leq 1 \\ \infty \cdot q \geq r_1 \\ \min(q, r_2) = 0}} 
\frac{\chi(p^q)}{p^{\beta(q + 2 r_1 + 2r_2) + \alpha(g_1 + g_2)}}.
\end{equation}
We write this as $\sum_{r_2=0} + \sum_{r_2 \geq 1}$, where the latter terms force $q=r_1=0$.
We have 
\begin{equation}
\sum_{r_2 \geq 1} = \sum_{r_2=1}^{\infty} p^{-2 \beta r_2} 
\Big(
\sum_{0 \leq g_2 \leq 2r_2-1} p^{-\alpha g_2} 
+ (1-p^{-1}) p^{-2 \alpha r_2} \Big)
= 
\sum_{r_2=1}^{\infty} p^{-2 \beta r_2}
\Big(\frac{1-p^{-2\alpha r_2}}{1-p^{-\alpha}} + (1-p^{-1}) p^{-2 \alpha r_2}\Big).
\end{equation}
This evaluates as
\begin{equation}
(1-p^{-\alpha})^{-1} \Big( \frac{p^{-2\beta}}{1-p^{-2\beta}} - \frac{p^{-2\beta-2\alpha}}{1-p^{-2\alpha-2\beta}}\Big)
+ (1-p^{-1}) \frac{p^{-2\alpha - 2 \beta}}{1-p^{-2\alpha - 2\beta}},
\end{equation}
which simplifies as
\begin{equation}
p^{-2 \beta} \frac{1+p^{-\alpha}}{(1-p^{-2\beta})(1-p^{-2\alpha - 2\beta})} +
(1-p^{-1}) \frac{p^{-2\alpha - 2\beta} (1-p^{-2\beta})}{(1-p^{-2\alpha-2\beta})(1-p^{-2\beta})}.
\end{equation}
In turn this becomes
\begin{equation}
\frac{p^{-2 \beta}}{(1-p^{-2\alpha-2\beta})(1-p^{-2\beta})}
\Big[ 1 + p^{-\alpha} + (1-p^{-1}) p^{-2\alpha}(1-p^{-2\beta})\Big].
\end{equation}

Likewise, we compute
\begin{equation}
\sum_{r_2=0} = 
\sum_{r_1=0}^{\infty}
\sum_{\substack{0 \leq g_1 \leq 2r_1 }}
\sum_{\substack{ 0 \leq q \leq 1 \\ \infty \cdot q \geq r_1 }} 
\frac{\chi(p^q)}{p^{\beta(q + 2 r_1 ) + \alpha g_1}}
= 1 
+ \sum_{r_1=0}^{\infty} 
\sum_{\substack{0 \leq g_1 \leq 2r_1 }}
\frac{\chi(p)}{p^{\beta(1 + 2 r_1 ) + \alpha g_1}},
\end{equation}
by separating out  the cases $q=0$ and $q=1$.  We calculate this as
\begin{equation}
1 
+ \chi(p) p^{-\beta} \sum_{r_1=0}^{\infty} p^{-2\beta r_1}
\frac{1-p^{-\alpha(2r_1 + 1)}}{1-p^{-\alpha}},
\end{equation}
which can be expressed as
\begin{equation}
1 + \frac{\chi(p) p^{-\beta}}{1-p^{-\alpha}} \Big(\frac{1}{1-p^{-2\beta}} - \frac{p^{-\alpha}}{1-p^{-2\alpha-2\beta}}\Big)
=1 + \frac{\chi(p) p^{-\beta} (1+p^{-\alpha-2\beta})}{(1-p^{-2\beta})(1-p^{-2\alpha-2\beta})}.
\end{equation}
Putting the two calculations together, we obtain 
\begin{multline*}
Z^{(p)}(\alpha,\beta) = \\
\frac{(1-p^{-2\beta})(1-p^{-2\alpha-2\beta}) + 
\chi(p) p^{-\beta}(1+p^{-\alpha-2\beta}) + p^{-2\beta}(1+p^{-\alpha} + (1-p^{-1})(p^{-2\alpha} - p^{-2\alpha-2\beta}))
					}{(1-p^{-2\beta})(1-p^{-2\alpha-2\beta})}.
\end{multline*}
Distributing out the numerator and canceling like terms, we obtain
\begin{align}
\label{cancellation1} Z^{(p)}(\alpha,\beta) &= 
\frac{(1+ \chi(p) p^{-\beta})(1+p^{-\alpha-2\beta}) - p^{-1-2\alpha-2\beta}(1-p^{-2\beta})
					}{(1-p^{-2\beta})(1-p^{-2\alpha-2\beta})}.
\end{align}
Simplifying gives \eqref{eq:Zkellpformula}.

Next, we need to consider the primes $p | k \ell$.
At such a prime we must have  $(q,p)=1$ (or else $(\frac{q}{k \ell})=0$) which implies $r_1 = 1$ and $g_2 = r_2^2$.  Thus
\begin{equation}
Z^{(p)}(s,u_1,u_2,u_3) = 
\sum_{r_2\ge 0   }
\frac{(1-p^{-1})^{\delta_{r_2>0}}}
	{ p^{r_2(2\beta + 2\alpha)} }
= 	 \frac{1-p^{-1-2\alpha - 2\beta}}{1-p^{-2 \alpha - 2\beta}}. \qedhere
\end{equation}
\end{proof}

Define the Dirichlet series
\begin{equation}
\label{eq:Ddef}
D(\alpha, \beta, \chi_{k \ell}) 
=
\sum_{(n,2) = 1} \frac{\mu^2(n)}{n^{\alpha + 2\beta}} \sum_{abc=n} \frac{\mu(b) \chi_{k \ell}(c)}{b^{1+\alpha} c^{1+\alpha+\beta}},
\end{equation}
which is absolutely convergent 
for $\mathrm{Re}(\alpha + 2 \beta) > 1$ and $\mathrm{Re}(\alpha + \beta) > 0$ (observe these conditions hold on $\mathcal{D}_{\infty}$, by \eqref{eq:DinfinityalphabetaDeduction}).  Note the Euler product formula
\begin{equation}
D(\alpha, \beta, \chi_{k \ell}) = \prod_{p \neq 2} (1 + p^{-\alpha - 2\beta}(1- p^{-1-\alpha} + \chi_{k \ell}(p) p^{-1-\alpha-\beta})).
\end{equation}
Putting together \eqref{eq:Zkellpformula} and \eqref{eq:ZkellpformulaRamified}, we deduce
(initially) in the region $\mathcal{D}_0$
\begin{equation}
\label{eq:ZkellEulerProductFormula}
Z_{k,\ell}(s, u_1, u_2, u_3) = 
L(\beta, \chi_{k \ell}) \frac{\zeta(2\alpha + 2\beta)}{(1-2^{-2\alpha - 2\beta})^{-1}}
D(\alpha, \beta, \chi_{k \ell}) 
(1- \chi_{k\ell}(2) 2^{-\beta}) 
\prod_{p | k \ell} a_p ,
\end{equation}
where
\begin{equation}
a_p = \frac{1 - p^{-1-2\alpha-2\beta}}{1 + p^{-\alpha - 2\beta} - p^{-1-2\alpha - 2\beta}}.
\end{equation}
Note that in $\mathcal{D}_{\infty}$, we have 
\begin{equation}
\label{eq:ZkellFiniteEulerProductEstimate}
a_p =  1 + O(p^{-1}).
\end{equation}

\begin{mylemma}
\label{lemma:Zproperties}
The series $Z_{k,\ell}(s,u_1, u_2, u_3)$ has meromorphic continuation to the domain $\mathcal{D}_{\infty}$.
In this region, $Z_{k,\ell}$ has a polar line only at $\beta = 1$ which occurs if and only if $\chi_{k \ell}$ is trivial. 
\end{mylemma}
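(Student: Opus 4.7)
\medskip

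The plan is to read off all the assertions directly from the explicit factorization \eqref{eq:ZkellEulerProductFormula}, which has already been established on $\mathcal{D}_0$, by analyzing each factor on the larger domain $\mathcal{D}_\infty$. The first preparatory step is to verify the inequalities recorded in \eqref{eq:DinfinityalphabetaDeduction}, namely $\mathrm{Re}(\alpha+2\beta)>1$ and $\mathrm{Re}(2\alpha+2\beta)>1$ on $\mathcal{D}_\infty$. For the first, I expand $\alpha+2\beta = 2u_1+u_2+u_3-1$ and use the fact that $\mathrm{Re}(u_1)+\min(\mathrm{Re}(u_2),\mathrm{Re}(u_3))>1$ together with $\mathrm{Re}(u_j)>1/2$ for $j=2,3$ to get $2\mathrm{Re}(u_1)+\mathrm{Re}(u_2)+\mathrm{Re}(u_3)>2+(\max-\min)\ge 2$. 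A parallel computation with $\alpha+\beta = u_1+u_2+u_3-1$ gives the second. I would also record the easy consequence $\mathrm{Re}(\beta) = \mathrm{Re}(u_1) > 0$ on $\mathcal{D}_\infty$, coming from combining the third condition in \eqref{eq:Dinfinity} with $\mathrm{Re}(u_j)>1/2$.

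The second step is to walk through each factor in \eqref{eq:ZkellEulerProductFormula}. The Dirichlet $L$-function $L(\beta,\chi_{k\ell})$ is entire except for a simple pole at $\beta=1$ when $\chi_{k\ell}$ is principal; since $\chi_{k\ell}(n)=(\tfrac{n}{k\ell})$ is a Jacobi-symbol character and $k\ell$ is odd, principality of $\chi_{k\ell}$ is equivalent to $k\ell$ being a perfect square, so this accounts exactly for the polar line claimed in the lemma. The ratio $\zeta(2\alpha+2\beta)(1-2^{-2\alpha-2\beta})$ is holomorphic on $\mathcal{D}_\infty$, since $\mathrm{Re}(2\alpha+2\beta)>1$ keeps us off the pole of $\zeta$ at $1$. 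The Dirichlet series $D(\alpha,\beta,\chi_{k\ell})$, viewed through its Euler product, has local factors $1+O(p^{-\mathrm{Re}(\alpha+2\beta)})$, and so converges absolutely (hence is holomorphic) throughout $\mathcal{D}_\infty$ thanks to $\mathrm{Re}(\alpha+2\beta)>1$. Finally, $1-\chi_{k\ell}(2)2^{-\beta}$ is visibly entire.

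The one mildly non-routine check is that the finite product $\prod_{p\mid k\ell} a_p$ over the ramified primes contributes no spurious poles in $\mathcal{D}_\infty$. For this I would observe that the denominator of $a_p$ satisfies
\begin{equation*}
|1+p^{-\alpha-2\beta}-p^{-1-2\alpha-2\beta}|\ \ge\ 1 - p^{-\mathrm{Re}(\alpha+2\beta)} - p^{-1-\mathrm{Re}(2\alpha+2\beta)}\ >\ 1 - p^{-1} - p^{-2}\ >\ 0
\end{equation*}
for every odd prime $p$, using the two strict inequalities from $\mathcal{D}_\infty$; this is consistent with the estimate \eqref{eq:ZkellFiniteEulerProductEstimate} already recorded. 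So each $a_p$ is holomorphic in $\mathcal{D}_\infty$ and the finite product is harmless.

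Assembling these observations, the identity \eqref{eq:ZkellEulerProductFormula} is promoted from $\mathcal{D}_0$ to a meromorphic continuation to $\mathcal{D}_\infty$ by analytic continuation, whose only polar locus is the line $\beta=1$ and only when $\chi_{k\ell}$ is principal. The main obstacle is not a deep analytic one but rather the careful bookkeeping of the strict inequalities defining $\mathcal{D}_\infty$, together with the identification of when the Jacobi-symbol character is principal.
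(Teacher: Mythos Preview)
Your approach is exactly the paper's: read off the continuation and the pole structure from the explicit factorization \eqref{eq:ZkellEulerProductFormula} together with the inequalities \eqref{eq:DinfinityalphabetaDeduction}. The factor-by-factor analysis and the check that the ramified local factors $a_p$ have nonvanishing denominator on $\mathcal{D}_\infty$ are all fine.

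One correction: your ``easy consequence'' that $\mathrm{Re}(\beta)=\mathrm{Re}(u_1)>0$ on $\mathcal{D}_\infty$ is false. The condition $\mathrm{Re}(u_1)+\min(\mathrm{Re}(u_2),\mathrm{Re}(u_3))>1$ only gives $\mathrm{Re}(u_1)>1-\min$, and since $\min$ is unbounded above this allows $\mathrm{Re}(u_1)$ to be arbitrarily negative. Indeed the paper later works explicitly in $\mathcal{D}_\infty\cap\{\mathrm{Re}(\beta)<0\}$ (see Lemma~\ref{lemma:ZkellAfterFunctionalEquation}). Fortunately you never actually use this claim, so simply delete it. A related minor slip: $\alpha+\beta = u_1+u_2+u_3-s-1$, not $u_1+u_2+u_3-1$; this is harmless for your purposes since $\mathrm{Re}(s)=0$, but worth writing correctly.
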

\begin{proof}
This follows from \eqref{eq:ZkellEulerProductFormula}, using \eqref{eq:DinfinityalphabetaDeduction}.
\end{proof}

\begin{myremark}\label{remark-c} \normalfont Observe the nice simplification in the passage from \eqref{cancellation1} to \eqref{eq:Zkellpformula}, in which a factor of of $(1-p^{-2\beta})$ is canceled from the numerator and denominator. This reveals that there is no $\zeta(2\beta)^{-1}$ type factor in \eqref{eq:ZkellEulerProductFormula}, which would have infinitely many poles in the domain $\mathcal{D}_{\infty}$. 
\end{myremark}


\subsection{Evaluation of $Z^{(2)}$}
\label{section:Z2}
Recall that $Z^{(2)}$ has four cases, corresponding to \eqref{eq:alphabetalines}.
\begin{mylemma}
In cases (i)--(iii) of \eqref{eq:alphabetalines}, 
the function $Z^{(2)}$ initially defined by \eqref{eq:Z2def} in the region \eqref{eq:ZdomainAbsConvergence} extends to a bounded analytic function on $\mathcal{D}_{\infty}$.
\end{mylemma}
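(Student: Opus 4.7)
The plan is to analyze $Z^{(2)}$ case by case using \eqref{eq:alphabetalines}, showing that in each of cases (i), (ii), (iii) the defining series evaluates to an absolutely convergent geometric series that is bounded uniformly on $\mathcal{D}_{\infty}$. For notational convenience, set $a_1 = u_1 + s$, $a_2 = u_2 - iU - s$, $a_3 = u_3 + iU - s$, so the summand in \eqref{eq:Z2def} becomes $2^{\min(\nu,\lambda-2-\delta)}/2^{\lambda a_1 + \nu a_2 + \gamma a_3}$, and let $\sigma_i = \mathrm{Re}(a_i) = \mathrm{Re}(u_i)$. The key inequality I will use repeatedly is that on $\mathcal{D}_{\infty}$ one has $\sigma_2,\sigma_3 > 1/2$ and $\sigma_1 + \min(\sigma_2,\sigma_3) > 1$, which together imply $\sigma_1 + \sigma_2 + \sigma_3 > 1 + \max(\sigma_2,\sigma_3) > 3/2$.

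First I handle case (i): $\lambda \leq \nu$, so $\min(\lambda,\nu) = \lambda$, and the side condition forces $\gamma \geq \lambda$; also $\min(\nu,\lambda-2-\delta) = \lambda-2-\delta$. The triple sum therefore factors as
\[
Z^{(2)}_{\mathrm{(i)}} = \sum_{\lambda \geq 4} \frac{2^{\lambda-2-\delta(\lambda)}}{2^{\lambda a_1}} \Big( \sum_{\nu \geq \lambda} 2^{-\nu a_2} \Big) \Big( \sum_{\gamma \geq \lambda} 2^{-\gamma a_3} \Big).
\]
The two inner geometric sums evaluate to $2^{-\lambda a_i}/(1 - 2^{-a_i})$ with denominators bounded away from zero since $\sigma_i > 1/2$. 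The outer $\lambda$-sum (after splitting by parity to handle $\delta$) is a pair of geometric series in $2^{1 - a_1 - a_2 - a_3}$; the ratio has modulus at most $2^{-1/2}$ uniformly on $\mathcal{D}_{\infty}$, giving absolute convergence and a uniform bound.

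Second, in cases (ii) and (iii) I note that $\lambda > \nu$ forces $\min(\lambda,\nu) = \nu$, and then $\min(\lambda,\gamma) = \nu$ together with $\gamma \geq 2$ forces $\gamma = \nu$. A direct check gives $\min(\nu,\lambda-2-\delta) = \nu - O(1)$ where the $O(1)$ depends only on the parity of $\lambda$. The series collapses to
\[
Z^{(2)}_{\mathrm{(ii\text{ or }iii)}} = 2^{-\kappa a_1} \sum_{\nu \geq \nu_0} 2^{\nu(1 - a_1 - a_2 - a_3) + O(1)},
\qquad \kappa \in \{1,2\},
\]
again a geometric series in $2^{1 - a_1 - a_2 - a_3}$. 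Absolute convergence is immediate from $\sigma_1 + \sigma_2 + \sigma_3 > 3/2$. The only subtle point for boundedness is that $\sigma_1$ may be negative on $\mathcal{D}_{\infty}$, making the prefactor $|2^{-\kappa a_1}| = 2^{-\kappa \sigma_1}$ potentially large. However, using $\sigma_1 > 1 - \min(\sigma_2,\sigma_3)$, a short computation shows
\[
-\kappa \sigma_1 + \nu_0 (1 - \sigma_1 - \sigma_2 - \sigma_3) \;<\; -c
\]
uniformly on $\mathcal{D}_{\infty}$ for some $c > 0$; roughly speaking, the decay of the geometric series in $\sigma_2,\sigma_3$ more than compensates for the growth of $2^{-\kappa\sigma_1}$.

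The main (minor) obstacle is the bookkeeping created by the parity-dependent exponent $\delta$ from \eqref{delta-parity}: since $\delta \equiv \lambda \pmod 2$, each sum must be split into even and odd parts, with constants in the exponents that depend on which of (ii) or (iii) one is in. However, this splitting only modifies the geometric series by bounded multiplicative constants and does not affect either convergence or the uniform bound. Analyticity on $\mathcal{D}_{\infty}$ then follows automatically from uniform absolute convergence via the Weierstrass $M$-test, yielding the claim. I also note in passing that the argument breaks down precisely in case (iv), where the condition $\lambda \geq \nu + 3$ generates an inner geometric series in $2^{-a_1}$ requiring $\sigma_1 > 0$, a condition not guaranteed on $\mathcal{D}_{\infty}$; this is presumably why case (iv) is deferred to a subsequent lemma.
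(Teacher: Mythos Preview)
Your proposal is correct and takes essentially the same approach as the paper: both evaluate the geometric series directly in each case and verify boundedness using the defining inequalities of $\mathcal{D}_\infty$. The paper phrases things in terms of $\alpha = u_2+u_3-2s-1$ and $\beta = u_1+s$ (your $a_2+a_3-1$ and $a_1$), writes down closed forms immediately, and invokes $\mathrm{Re}(\alpha+2\beta)>1$ and $\mathrm{Re}(2\alpha+2\beta)>1$ for the boundedness in cases (ii)--(iii); your inequality $-\kappa\sigma_1 + \nu_0(1-\sigma_1-\sigma_2-\sigma_3) < -c$ is exactly the same estimate after translation (and your closing remark on why case (iv) fails is correct and matches the paper's treatment in the next lemma).
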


\begin{proof}
This follows from brute force computation with geometric series.  
For case (i), we have
\begin{equation}
Z^{(2)} = \frac{(1-2^{-(u_2-iU-s)})^{-1} (1-2^{-(u_3+iU-s)})^{-1}}{2^{2+\delta} 2^{4(\alpha+\beta)} (1-2^{-\alpha-\beta})},
\end{equation}
which satisfies the claimed properties by inspection.  Cases (ii) and (iii) are easier, and give $Z^{(2)} = 2^{-1-\delta-3 \alpha - 4\beta} (1-2^{-\alpha-\beta})^{-1}$ and $Z^{(2)} = 2^{-\delta-2 \alpha - 4\beta} (1-2^{-\alpha-\beta})^{-1}$, respectively.
In case (ii), to see the boundedness on $\mathcal{D}_{\infty}$, note $2^{-3\alpha - 4\beta} = 2^{-2 \alpha - 2\beta} 2^{-\alpha - 2 \beta}$, and recall \eqref{eq:DinfinityalphabetaDeduction}.
\end{proof}
When $Z^{(2)}$ is given by case (iv), which recall restricts the summation to $\lambda \geq \nu +3$, it is convenient to split the sum into two pieces according to the size of $\lambda- \nu$.  
For any integer $L \geq 3$, write $Z^{(2)} = Z^{(2)}_{\leq L} + Z^{(2)}_{>L}$, where $Z^{(2)}_{\leq L}$ restricts to $\lambda- \nu \leq L$, and $Z^{(2)}_{>L}$ restricts to $\lambda- \nu > L$.
\begin{mylemma}
\label{lemma:Z2caseiv}
 In case (iv), $Z_{\leq L}^{(2)}$ extends to an analytic function 
on $\mathcal{D}_{\infty}$, wherein it satisfies the bound
\begin{equation}
\label{eq:Z2initialsegmentBound}
|Z^{(2)}_{\leq L}| \ll L (2^{- L \beta} + 1). 
\end{equation}
The tail $Z_{> L}^{(2)}$ is analytic on $\mathcal{D}_0$
wherein it satisfies the bound
 \begin{equation}
 \label{eq:Z2tailBound}
 |Z^{(2)}_{> L}| \ll 2^{-L \beta}.
 \end{equation}
\end{mylemma}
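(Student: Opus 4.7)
The plan is to exploit the rigid structure of case (iv) to factor $Z^{(2)}$ as a product of a $\nu$-series and a $j$-series (with $j := \lambda - \nu$) that can each be analyzed directly. First I would simplify the constraints: case (iv) assumes $\lambda \geq \nu + 3$, which forces $\min(\lambda, \nu) = \nu$ and therefore $\gamma = \nu$ from $\min(\lambda, \nu) = \min(\lambda, \gamma)$. Moreover, since $\delta \in \{0, 1\}$ and $\lambda \geq \nu + 3$, we have $\lambda - 2 - \delta \geq \nu$, so $(2^\nu, 2^{\lambda - 2 - \delta}) = 2^\nu$. Substituting into \eqref{eq:Z2def} and collecting exponents using $\alpha = u_2 + u_3 - 2s - 1$ and $\beta = u_1 + s$, the summand becomes $2^{-\nu(\alpha + \beta) - j\beta}$, where $j = \lambda - \nu \geq 3$. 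In the region of absolute convergence, the sum therefore factors as
\[
Z^{(2)} = \Big( \sum_{\nu \geq 2} 2^{-\nu(\alpha + \beta)} \Big) \Big( \sum_{j \geq 3} 2^{-j \beta} \Big),
\]
and the truncations $3 \leq j \leq L$ and $j > L$ isolate $Z^{(2)}_{\leq L}$ and $Z^{(2)}_{> L}$ respectively.

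Next, I would verify that the $\nu$-series converges absolutely on $\mathcal{D}_\infty$. Since $\mathrm{Re}(\alpha + \beta) = \mathrm{Re}(u_1) + \mathrm{Re}(u_2) + \mathrm{Re}(u_3) - 1$, and since $\mathcal{D}_\infty$ demands $\mathrm{Re}(u_1) + \min(\mathrm{Re}(u_2), \mathrm{Re}(u_3)) > 1$ together with $\mathrm{Re}(u_2), \mathrm{Re}(u_3) > 1/2$, one gets $\mathrm{Re}(\alpha + \beta) > \max(\mathrm{Re}(u_2), \mathrm{Re}(u_3)) > 1/2$ throughout $\mathcal{D}_\infty$. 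The $\nu$-series is then a convergent geometric sum bounded by an absolute constant. For $Z^{(2)}_{\leq L}$, the $j$-sum is the finite exponential polynomial $\sum_{j=3}^{L} 2^{-j \beta}$, hence entire in $\beta$; bounding term by term yields $\big| \sum_{j=3}^{L} 2^{-j \beta} \big| \ll L (1 + |2^{-L \beta}|)$, which combined with the $O(1)$ bound for the $\nu$-sum gives \eqref{eq:Z2initialsegmentBound} together with analyticity on $\mathcal{D}_\infty$.

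For $Z^{(2)}_{> L}$, the $j$-tail evaluates in closed form to $2^{-(L+1)\beta}/(1 - 2^{-\beta})$, which converges only when $\mathrm{Re}(\beta) > 0$. This holds on $\mathcal{D}_0$, where $\mathrm{Re}(\beta) = \mathrm{Re}(u_1) > 1$, and the tail is $O(|2^{-L \beta}|)$, producing \eqref{eq:Z2tailBound}. The main point requiring attention is making sure $\mathrm{Re}(\alpha + \beta)$ stays bounded away from zero throughout all of $\mathcal{D}_\infty$ despite $\mathrm{Re}(\beta)$ being allowed to be negative there; this is precisely what the interplay between the conditions $\mathrm{Re}(u_2), \mathrm{Re}(u_3) > 1/2$ and $\mathrm{Re}(u_1) + \min > 1$ in the definition of $\mathcal{D}_\infty$ guarantees. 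It is crucial that the factorization isolates the $\beta$-sum from the $(\alpha+\beta)$-sum, since it is only in the former that we lose analyticity as $\mathrm{Re}(\beta)$ crosses zero.
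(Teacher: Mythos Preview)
Your proof is correct and follows essentially the same approach as the paper: you reduce case (iv) to $\gamma = \nu$ and $(2^{\nu}, 2^{\lambda-2-\delta}) = 2^{\nu}$, change variables $j = \lambda - \nu$ to factor the sum as $\big(\sum_{\nu \geq 2} 2^{-\nu(\alpha+\beta)}\big)\big(\sum_{j \geq 3} 2^{-j\beta}\big)$, and then analyze each factor separately. The paper does the identical computation (writing $\mu$ for your $j$), arriving at $Z^{(2)}_{\leq L} = \frac{2^{-2\alpha-2\beta}}{1-2^{-\alpha-\beta}} \sum_{3 \leq \mu \leq L} 2^{-\mu \beta}$ and the corresponding tail formula, and then reads off the analyticity and bounds; your added justification that $\mathrm{Re}(\alpha+\beta) > 1/2$ on $\mathcal{D}_\infty$ is exactly the content of \eqref{eq:DinfinityalphabetaDeduction}.
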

\begin{proof}
Since $\lambda \geq \nu + 3$, then $\min(\lambda, \nu) = \nu$, and the condition $\min(\lambda, \nu) = \min(\lambda, \gamma)$ means $\gamma = \nu$.  Therefore,
\begin{equation}
Z^{(2)}_{\leq L} =  \sum_{\nu \geq 2} \sum_{\nu + 3 \leq \lambda \leq \nu + L} \frac{2^{\nu}}{2^{\lambda \beta + \nu (\alpha + 1)}}
= 
\sum_{\nu \geq 2} \sum_{ 3 \leq \mu \leq L} \frac{1}{2^{(\nu + \mu) \beta + \nu \alpha}}
= \frac{2^{-2\alpha - 2 \beta}}{(1-2^{-\alpha-\beta})} \sum_{3 \leq \mu \leq L} 2^{-\mu \beta}.
\end{equation}
From this representation we easily read off its analytic continuation and the bound \eqref{eq:Z2initialsegmentBound}.
For the tail, we may modify the previous calculation to give 
\begin{equation}
\label{eq:Z2tailFormula}
Z^{(2)}_{> L}  
= \frac{2^{-2\alpha - 2 \beta}}{(1-2^{-\alpha-\beta})} \sum_{\mu \geq L+1} 2^{-\mu \beta}
= \frac{2^{-2\alpha - 2 \beta}}{(1-2^{-\alpha-\beta})} \frac{2^{-\beta(L+1)}}{(1-2^{-\beta})}
,
\end{equation}
from which we immediately read off the desired properties.
\end{proof}
\begin{rema}
\normalfont  Note that $Z^{(2)}_{>L}$
does not analytically continue to $\mathcal{D}_{\infty}$ since \eqref{eq:Z2tailFormula}  has
poles on the line $\mathrm{Re}(\beta) = 0$.  This explains the reason for splitting $Z^{(2)}$ into these two pieces.
\end{rema}

To unify the notation, in cases (i)--(iii), we define $Z_{>L}^{(2)} = 0$ and $Z_{\leq L}^{(2)} = Z^{(2)}$.
Corresponding to this decomposition of $Z^{(2)}$, we likewise write 
\begin{equation}
Z = Z_{\leq L} + Z_{>L}.
\end{equation}
With this definition, then the statement of Lemma \ref{lemma:Z2caseiv} holds in cases (i)--(iii) as well.  In this way we may henceforth unify the exposition for the four cases (i)--(iv).

\subsection{Continuation of $Z_{\leq L}$}
It is now useful to define another domain.
\begin{mydefi}
Let $\mathcal{D}_1$ be the set of $(s,u_1, u_2, u_3) \in \mc^4$ with $\mathrm{Re}(s) =0$, $\mathrm{Re}(u_2) > 1$, $\mathrm{Re}(u_3) > 1$, and satisfying
\begin{equation}
\label{eq:ZkellDomain}
\quad
\begin{cases}
\mathrm{Re}(u_1) +  \min(\mathrm{Re}(u_2), \mathrm{Re}(u_3)) > 3/2 \\
\mathrm{Re}(u_1) + 2 \min(\mathrm{Re}(u_2), \mathrm{Re}(u_3)) ) > 3.
\end{cases}
\end{equation}
\end{mydefi}
Note that $\mathcal{D}_0 \subset \mathcal{D}_1 \subset \mathcal{D}_{\infty}$.

\begin{mylemma}
\label{lemma:Zmeromorphic}
The series \eqref{eq:Zsumwithkandellonoutside} converges absolutely on $\mathcal{D}_1 \cap \{ \beta \neq 1 \}$ (and uniformly on compact subsets), which furnishes meromorphic continuation of the function $Z_{\leq L}$ to this domain.  Moreoever, the residue at $\beta = 1$ of $Z_{\leq L}$ is bounded for $\mathrm{Re}(u_2), \mathrm{Re}(u_3) > 1$.
\end{mylemma}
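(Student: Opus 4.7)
\emph{Strategy.} The plan is to isolate the factor $L(\beta,\chi_{k\ell})$ from the Euler product of $Z_{k,\ell}$ in \eqref{eq:ZkellEulerProductFormula} (it is the only source of both singularities and polynomial growth in $(k,\ell)$), peel off the pairs $(k,\ell)$ with $\chi_{k\ell}$ trivial (which account for the pole at $\beta=1$), and then control the off-diagonal double character sum by combining Cauchy--Schwarz with Heath--Brown's quadratic large sieve and the convexity bound.

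\emph{Factorization and elimination of nice factors.} From \eqref{eq:ZkellEulerProductFormula} I would write
\[
Z_{k,\ell}=L(\beta,\chi_{k\ell})\cdot\zeta^{(2)}(2\alpha+2\beta)\cdot D(\alpha,\beta,\chi_{k\ell})\cdot(1-\chi_{k\ell}(2)2^{-\beta})\prod_{p\mid k\ell}a_p.
\]
By \eqref{eq:DinfinityalphabetaDeduction}, $\zeta^{(2)}(2\alpha+2\beta)$ is analytic and bounded on $\mathcal{D}_\infty$, and $D(\alpha,\beta,\chi_{k\ell})$ is absolutely convergent and uniformly $O(1)$ there via its Euler product. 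The finite product over $p\mid k\ell$ is $O((k\ell)^\epsilon)$ by \eqref{eq:ZkellFiniteEulerProductEstimate}, and Lemma \ref{lemma:Z2caseiv} supplies the bounded analytic factor $Z^{(2)}_{\le L}$. Hence the lemma reduces to establishing absolute convergence, uniformly on compact subsets of $\mathcal{D}_1\setminus\{\beta=1\}$, of
\[
\Sigma=\sum_{(k\ell,2)=1}\frac{|L(\beta,\chi_{k\ell})|\,(k\ell)^\epsilon}{k^{\mathrm{Re}(u_2)}\ell^{\mathrm{Re}(u_3)}}.
\]

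\emph{Pole extraction.} The character $\chi_{k\ell}$ (with $k,\ell$ odd) is trivial precisely when $k\ell$ is a perfect square, which is equivalent to $k=ra^2$, $\ell=rb^2$ for some odd squarefree $r$ and odd $a,b$. For these diagonal pairs, $L(\beta,\chi_{k\ell})$ equals $\zeta(\beta)$ times a finite Euler product, and the outer triple sum over $(r,a,b)$ factors essentially as $\zeta^{(2)}(u_2+u_3)\zeta^{(2)}(2u_2)\zeta^{(2)}(2u_3)$ up to absolutely convergent corrections. This converges absolutely for $\mathrm{Re}(u_2),\mathrm{Re}(u_3)>1$, exhibiting the simple pole at $\beta=1$ whose residue is bounded on that half-plane, as claimed.

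\emph{Off-diagonal contribution.} For the remaining pairs, apply Cauchy--Schwarz with auxiliary exponents $c,d$:
\[
\Sigma'\le\Big(\sum_{k,\ell}\frac{|L(\beta,\chi_{k\ell})|^2}{k^c\ell^d}\Big)^{1/2}\Big(\sum_{k,\ell}\frac{(k\ell)^{2\epsilon}}{k^{2\mathrm{Re}(u_2)-c}\ell^{2\mathrm{Re}(u_3)-d}}\Big)^{1/2}.
\]
The second factor converges provided $c<2\mathrm{Re}(u_2)-1$ and $d<2\mathrm{Re}(u_3)-1$. For the first, reparametrize $m=k\ell$ (noting $\sum_{k\mid m}k^{d-c}\ll m^\epsilon$) and invoke Heath--Brown's second moment \eqref{eq:HBsecondmomentwithweights}, extended off the critical line $\mathrm{Re}(\beta)=1/2$ by an approximate functional equation together with Phragm\'en--Lindel\"of; optimizing yields absolute convergence in the range $\mathrm{Re}(u_1)+2\min(\mathrm{Re}(u_2),\mathrm{Re}(u_3))>3$. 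A parallel Cauchy--Schwarz step that employs a different balance of weights and uses the pointwise convexity bound $|L(\beta,\chi_m)|\ll m^{(1-\mathrm{Re}(\beta))/2+\epsilon}$ in tandem with Heath--Brown covers the complementary region $\mathrm{Re}(u_1)+\min(\mathrm{Re}(u_2),\mathrm{Re}(u_3))>3/2$. Their union is $\mathcal{D}_1$, yielding the claimed meromorphic continuation. The main technical obstacle is the transfer of \eqref{eq:HBsecondmomentwithweights} from the single line $\mathrm{Re}(\beta)=1/2$ to a uniform second-moment estimate on $\sum_{m\le M}|L(\beta,\chi_m)|^2$ valid throughout the strip $1/2\le\mathrm{Re}(\beta)\le 1$ with controlled $|\mathrm{Im}(\beta)|$-dependence; this interpolation is precisely what dictates the two facets of \eqref{eq:ZkellDomain}.
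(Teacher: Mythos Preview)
Your factorization step and pole extraction match the paper's approach exactly. However, your treatment of the off-diagonal is substantially more elaborate than necessary, and this leads you to leave the argument incomplete (you flag the interpolation of Heath--Brown's estimate off the line $\mathrm{Re}(\beta)=1/2$ as ``the main technical obstacle'' without resolving it).

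The paper's proof dispenses with Cauchy--Schwarz and the quadratic large sieve entirely: the pointwise convexity bound alone already yields both facets of \eqref{eq:ZkellDomain}. For $0\le\mathrm{Re}(\beta)\le 1$ one has $|L(\beta,\chi_{k\ell})|\ll_{\mathrm{Im}(\beta),\sigma}(k\ell)^{(1-\mathrm{Re}(\beta))/2+\varepsilon}$ (away from $\beta=1$), so your sum $\Sigma$ converges as soon as $\mathrm{Re}(u_j)>(3-\mathrm{Re}(u_1))/2$, i.e.\ $\mathrm{Re}(u_1)+2\min(\mathrm{Re}(u_2),\mathrm{Re}(u_3))>3$. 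For $\mathrm{Re}(\beta)\le 0$ convexity gives $|L(\beta,\chi_{k\ell})|\ll(k\ell)^{1/2-\mathrm{Re}(\beta)+\varepsilon}$, and $\Sigma$ converges once $\mathrm{Re}(u_j)>3/2-\mathrm{Re}(u_1)$, i.e.\ $\mathrm{Re}(u_1)+\min(\mathrm{Re}(u_2),\mathrm{Re}(u_3))>3/2$. These are precisely the two inequalities defining $\mathcal{D}_1$, so no averaging is needed. Heath--Brown's large sieve is indeed crucial in the paper, but only at the endgame (Section~\ref{section:proofofthm1}), not for this lemma; invoking it here gains nothing and forces you into an interpolation that the direct argument avoids.
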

\begin{proof}
We return to \eqref{eq:Zsumwithkandellonoutside} and 
use the representation 
\eqref{eq:ZkellEulerProductFormula}, valid in $\mathcal{D}_0$.  The results from Section \ref{section:Z2} give
the analytic continuation of $Z^{(2)}_{\leq L}$ to $\mathcal{D}_{\infty}$ (and hence, $\mathcal{D}_1$).  Since $L(\beta, \chi_{k \ell})$ has a pole at $\beta = 1$ when $\chi_{k \ell}$ is trivial, we suppose $| \beta - 1| \geq \sigma > 0$, and will claim bounds with an implied constant that may depend on $\sigma$.
For $0 \leq \mathrm{Re}(\beta) = \mathrm{Re}(u_1) \leq 1$, we have the convexity bound $|L(\beta, \chi_{k \ell})| \ll_{\mathrm{Im}(\beta), \sigma, \varepsilon} (kl)^{ \frac{1-\mathrm{Re}(\beta)}{2} + \varepsilon}$ (with an implied constant depending at most polynomially on $\beta$).  One easily checks that \eqref{eq:Zsumwithkandellonoutside} converges absolutely for $\min(\mathrm{Re}(u_2), \mathrm{Re}(u_3)) + \frac{\mathrm{Re}(\beta)}{2} > \frac32$, which is one of the inequalities stated in \eqref{eq:ZkellDomain}.
Similarly, for $\mathrm{Re}(\beta) \leq 0$ we use the convexity bound $|L(\beta, \chi_{k \ell})| \ll (k \ell)^{\frac12 - \text{Re}(\beta) + \varepsilon}$ to see the absolute convergence for $\mathrm{Re}(u_1) + \min(\mathrm{Re}(u_2), \mathrm{Re}(u_3)) > 3/2$.
The uniform convergence on compact subsets is immediate, and so the meromorphic continuation follows.

Finally, to see the size of the residue, we simply note from \eqref{eq:ZkellEulerProductFormula} that $\text{Res}_{\beta = 1} Z_{k, \ell} \ll (k \ell)^{\varepsilon}$ for $\mathrm{Re}(u_2), \mathrm{Re}(u_3) \geq 1$.  In addition, the pole only exists if $k \ell$ is a square.
Moreover, $Z_{\leq L}^{(2)}$ is bounded at this point.  From \eqref{eq:Zsumwithkandellonoutside} we may then easily see the absolute convergence of the sum of these residues over $k, \ell$.
\end{proof}

\subsection{Functional equation}
Next we investigate how $Z_{k, \ell}$ and $Z_{\leq L}$ behave after an application of the functional equation of $L(\beta, \chi_{k \ell})$.  Suppose that $\chi_{k \ell}$ is induced by the primitive character $\chi^*$ of conductor $(k \ell)^*$.  We have
\begin{equation}
\Lambda(s,\chi^*) = ((k \ell)^*)^{s/2} \gamma(s) L(s, \chi^*) =  \Lambda(1-s, \chi^*),
\end{equation}
where $\gamma(s) = \pi^{-s/2} \Gamma(\frac{s+\kappa}{2})$, where $\kappa \in \{0, 1\}$ reflects the parity of $\chi$.  We therefore deduce the asymmetric form of the functional equation:
\begin{equation}
L(s,\chi_{k \ell}) =((k \ell)^*)^{\frac12 - s} \frac{\gamma(1-s)}{\gamma(s)}  L(1-s,\chi_{k \ell})
\prod_{p|k \ell} \frac{(1-\chi^*(p) p^{-s})}{(1-\chi^*(p) p^{s-1})}.
\end{equation}
\begin{mylemma}
\label{lemma:ZkellAfterFunctionalEquation}
In $\mathcal{D}_{\infty} \cap \{ \mathrm{Re}(\beta) < 0 \}$, we have
\begin{multline}
\label{eq:ZkellAfterFunctionalEquation}
Z_{k, \ell} = 
((k \ell)^*)^{\frac12 - \beta} \frac{\gamma(1-\beta)}{\gamma(\beta)}  
D(\alpha, \beta, \chi_{k \ell})
\frac{\zeta(2\alpha + 2\beta)}{(1-2^{-2\alpha - 2\beta})^{-1}} (1- 2^{-\beta} \chi_{k \ell}(2)) 
\\
\sum_{q=1}^{\infty} \frac{(\frac{q}{k \ell})}{q^{1-\beta}}
\prod_{p|k \ell} \frac{(1-\chi^*(p) p^{-\beta})}{(1-\chi^*(p) p^{\beta-1})}
\prod_{p | k \ell} a_p.
\end{multline}
\end{mylemma}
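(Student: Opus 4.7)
The plan is to derive \eqref{eq:ZkellAfterFunctionalEquation} by direct substitution of the asymmetric functional equation for the Dirichlet $L$-function into the Euler product formula \eqref{eq:ZkellEulerProductFormula} already established for $Z_{k,\ell}$. The formula \eqref{eq:ZkellEulerProductFormula} was originally derived only in $\mathcal{D}_0$, so first I would extend each factor on its right-hand side analytically to the larger domain $\mathcal{D}_\infty$. Concretely, $\zeta(2\alpha+2\beta)$ is analytic throughout $\mathcal{D}_\infty$ because $\mathrm{Re}(2\alpha+2\beta) > 1$ by \eqref{eq:DinfinityalphabetaDeduction}; the Dirichlet series $D(\alpha,\beta,\chi_{k\ell})$ defined by \eqref{eq:Ddef} is absolutely convergent on $\mathcal{D}_\infty$ since both $\mathrm{Re}(\alpha+2\beta) > 1$ and $\mathrm{Re}(\alpha+\beta) > 0$ hold there; and the two finite products $\prod_{p \mid k\ell} a_p$ and $(1 - \chi_{k\ell}(2) 2^{-\beta})$ are polynomials in negative prime powers, hence entire in all variables. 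Thus \eqref{eq:ZkellEulerProductFormula} gives a meromorphic continuation of $Z_{k,\ell}$ to $\mathcal{D}_\infty$, whose only possible singularities come from the factor $L(\beta, \chi_{k\ell})$.

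Next, restricting to the subdomain $\mathcal{D}_\infty \cap \{\mathrm{Re}(\beta) < 0\}$, I would invoke the asymmetric functional equation
\begin{equation*}
L(\beta,\chi_{k\ell}) = \bigl((k\ell)^*\bigr)^{\frac12-\beta} \frac{\gamma(1-\beta)}{\gamma(\beta)} L(1-\beta,\chi_{k\ell}) \prod_{p \mid k\ell} \frac{1-\chi^*(p) p^{-\beta}}{1-\chi^*(p) p^{\beta-1}}
\end{equation*}
stated in the paragraph preceding the lemma. In this subdomain $\mathrm{Re}(1-\beta) > 1$, so $L(1-\beta,\chi_{k\ell})$ may be expanded as the absolutely convergent Dirichlet series $\sum_{q \geq 1} \bigl(\tfrac{q}{k\ell}\bigr) q^{\beta-1}$. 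Inserting this expansion, together with the functional equation, into \eqref{eq:ZkellEulerProductFormula}, and collecting terms exactly as they appear on the right-hand side of \eqref{eq:ZkellAfterFunctionalEquation}, completes the proof.

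There is no substantial obstacle; the content of the lemma is really a bookkeeping exercise once one is careful about the region of validity. The only points that require attention are (i) verifying that the analytic continuation of \eqref{eq:ZkellEulerProductFormula} to $\mathcal{D}_\infty$ is legitimate, which follows from the convergence observations above, and (ii) checking that the $q$-sum in \eqref{eq:ZkellAfterFunctionalEquation} is indeed absolutely convergent in the stated region, which is precisely the content of restricting to $\mathrm{Re}(\beta) < 0$.
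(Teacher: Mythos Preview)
Your proposal is correct and follows essentially the same approach as the paper: invoke the meromorphic continuation of $Z_{k,\ell}$ to $\mathcal{D}_\infty$ via \eqref{eq:ZkellEulerProductFormula} (which the paper packages as a citation of Lemma~\ref{lemma:Zproperties}, whereas you re-verify each factor directly), then apply the asymmetric functional equation and expand $L(1-\beta,\chi_{k\ell})$ as its absolutely convergent Dirichlet series on $\{\mathrm{Re}(\beta)<0\}$.
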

\begin{proof}
Lemma \ref{lemma:Zproperties} implies that the expression \eqref{eq:ZkellEulerProductFormula} for $Z_{k, \ell}$ is analytic on $\mathcal{D}_{\infty} \cap \{ \beta \neq 1\}$.
With the assumption $\mathrm{Re}(\beta) < 0$, we may apply the functional equation and express $L(1-\beta, \chi_{k \ell})$ in terms of its absolutely convergent Dirichlet series, which is \eqref{eq:ZkellAfterFunctionalEquation}.  
\end{proof}

Having applied the functional equation to $Z_{k, \ell}$, the plan of action is to now insert this expression into the definition of $Z_{\leq L}$ and reverse the orders of summation, bringing $k$ and $\ell$ to the inside.  The outcome of this step is recorded with the following.
\begin{mylemma}
\label{lemma:ZlegLFormulalemma}
On $\mathcal{D}_1 \cap \{\mathrm{Re}(\beta) < 0 \}$, $Z_{\leq L}$ is a finite linear combination of absolutely convergent expressions of the form
\begin{equation}
\label{eq:ZleqLFormula}
Z^{(2)}_{\leq L} \frac{\gamma(1-\beta)}{\gamma(\beta)} \frac{\zeta(2\alpha + 2\beta)}{(1-2^{-2\alpha - 2\beta})^{-1}} \  \frac{(1 \pm 2^{-\beta})}{(1 \pm  2^{\beta-1})}
\sum_{(q,2) = 1} q^{\beta -1} \nu_1(q) A_q,
\end{equation}
with $A_q = A_q(s, u_1, u_2, u_3, U, \nu_2, \dots, \nu_6)$ defined by
\begin{multline}
\label{eq:Aqdef}
A_q = 
\sum_{(abc,2) = 1} \frac{\mu^2(abc) \nu_2(c) }{(abc)^{\alpha + 2\beta}} \frac{\mu(b)}{b^{1+\alpha} c^{1+\alpha+\beta}}
\\
\sum_{(k \ell, 2)=1} \frac{ (\frac{k \ell}{cq}) \nu_3(k) \nu_4(\ell)((k \ell)^*)^{\frac12 - \beta}}{k^{u_2-iU-s} \ell^{u_3+iU-s}}
\prod_{p|k \ell} \frac{(1-\chi_p((k\ell)^*) \nu_5(p) p^{-\beta})}{(1- \chi_p((k\ell)^* \nu_6(p)) p^{\beta-1})}
\prod_{p | k \ell} a_p,
\end{multline}
and where the $\nu_i$ run over Dirichlet characters modulo $8$.
\end{mylemma}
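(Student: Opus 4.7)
The plan is to substitute the post-functional-equation expression \eqref{eq:ZkellAfterFunctionalEquation} for $Z_{k,\ell}$ into the decomposition \eqref{eq:Zsumwithkandellonoutside} of $Z_{\leq L}$. The global factors $Z^{(2)}_{\leq L}$, $\gamma(1-\beta)/\gamma(\beta)$, and $\zeta(2\alpha+2\beta)/(1-2^{-2\alpha-2\beta})^{-1}$ from \eqref{eq:ZkellAfterFunctionalEquation} immediately pull out of the $k, \ell$ sum. I would then expand $D(\alpha,\beta,\chi_{k\ell})$ using its defining Dirichlet series \eqref{eq:Ddef} into a triple sum over $a, b, c$, and interchange orders of summation to bring the sums over $q$ and $(a,b,c)$ to the outside, leaving a double sum in $k, \ell$ on the inside.

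The technical heart of the proof is converting the Jacobi symbol factors inside so as to match \eqref{eq:Aqdef}. The product $\chi_{k\ell}(q)\chi_{k\ell}(c)=(\tfrac{qc}{k\ell})$ is rewritten by quadratic reciprocity (valid since $k\ell$ and $qc$ are odd) as $(\tfrac{k\ell}{qc})$ times a sign factor that depends only on $qc$ and $k\ell$ modulo $4$. Likewise, each $\chi^*(p)=(\tfrac{p}{(k\ell)^*})$ appearing in the finite Euler product of \eqref{eq:ZkellAfterFunctionalEquation} flips to $\chi_p((k\ell)^*)=(\tfrac{(k\ell)^*}{p})$ up to a similar mod-$4$ sign. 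Every such sign is a function on $(\mz/8)^\times\times (\mz/8)^\times$, hence a finite linear combination of products of Dirichlet characters modulo $8$ in the relevant variables; these absorb cleanly into the characters $\nu_1(q),\nu_2(c),\nu_3(k),\nu_4(\ell),\nu_5(p),\nu_6(p)$. The factor $(1-2^{-\beta}\chi_{k\ell}(2))$ depends on $k\ell \pmod 8$ and decomposes into terms of the shape $(1\pm 2^{-\beta})\nu(k)\nu'(\ell)$, giving the numerator of the $2$-factor in \eqref{eq:ZleqLFormula}. The denominator $(1\pm 2^{\beta-1})^{-1}$ arises from restricting the post-functional-equation $q$-sum $\sum_q(\tfrac{q}{k\ell})q^{\beta-1}$ to odd $q$ (as required by the original domain of summation in \eqref{eq:Zdef}), which amounts to dividing by the Euler-$2$ factor $(1-\chi_{k\ell}(2)2^{\beta-1})^{-1}$ of $L(1-\beta,\chi_{k\ell})$ and again using a mod-$8$ decomposition. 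After these manipulations, the remaining inner double sum over $k, \ell$ is exactly $A_q$ in \eqref{eq:Aqdef}.

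Finally, I would verify absolute convergence of the resulting expression on $\mathcal{D}_1\cap\{\mathrm{Re}(\beta)<0\}$. Writing $k\ell=(k\ell)^* M^2$, the factor $((k\ell)^*)^{1/2-\beta}$ is majorized by $(k\ell)^{1/2-\mathrm{Re}(\beta)}$, so the $k, \ell$ sum inside $A_q$ converges absolutely provided $\mathrm{Re}(u_j)-\mathrm{Re}(s)-(1/2-\mathrm{Re}(\beta))>1$ for $j=2,3$, which (using $\mathrm{Re}(s)=0$ and $\beta=u_1+s$) is exactly $\mathrm{Re}(u_1)+\mathrm{Re}(u_j)>3/2$, the first inequality in \eqref{eq:ZkellDomain}. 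The second inequality $\mathrm{Re}(u_1)+2\min(\mathrm{Re}(u_2),\mathrm{Re}(u_3))>3$ is needed to handle the contribution from $k, \ell$ with $(k\ell)^*$ large relative to $k\ell$, where one exploits the additional decay of the $q$-factor $q^{\beta-1}$ with $\mathrm{Re}(\beta)<0$ to absorb the $(k\ell)^*$-growth. The main obstacle is the bookkeeping of the quadratic reciprocity sign factors: one must verify that every sign produced by flipping the various Jacobi symbols $(\tfrac{qc}{k\ell})\mapsto(\tfrac{k\ell}{qc})$ and $(\tfrac{p}{(k\ell)^*})\mapsto(\tfrac{(k\ell)^*}{p})$, together with the dependencies of the $2$-part on $k\ell\pmod 8$, can be packaged into a single finite linear combination indexed by a fixed list of mod-$8$ characters $\nu_1,\ldots,\nu_6$; once that is done carefully, the rest of the argument is a routine rearrangement.
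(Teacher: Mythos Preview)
Your proposal is correct and follows essentially the same approach as the paper: substitute \eqref{eq:ZkellAfterFunctionalEquation} into \eqref{eq:Zsumwithkandellonoutside}, expand $D(\alpha,\beta,\chi_{k\ell})$ via \eqref{eq:Ddef}, apply quadratic reciprocity to flip the Jacobi symbols, and decompose all mod-$8$ dependencies into Dirichlet characters. One small point the paper makes explicit that you leave implicit: the gamma factor $\gamma(s)=\pi^{-s/2}\Gamma(\tfrac{s+\kappa}{2})$ itself depends on the parity $\kappa$ of $\chi_{k\ell}$, which is a function of $k\ell\pmod 8$; this is another source of mod-$8$ splitting beyond the reciprocity signs and the $2$-factors, and is what makes the linear combination genuinely ``finite'' rather than just notational.
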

Observe that \eqref{eq:Aqdef} converges absolutely on $\mathcal{D}_1$.

\begin{proof}
Applying Lemma \ref{lemma:ZkellAfterFunctionalEquation} into \eqref{eq:Zsumwithkandellonoutside}, 
which is valid on $\mathcal{D}_1 \cap \{ \mathrm{Re}(\beta) < 0 \}$ by Lemma \ref{lemma:Zmeromorphic}, and applying the Dirichlet series expansion of $D(\alpha, \beta, \chi_{k \ell})$ given in \eqref{eq:Ddef}, 
we deduce 
\begin{multline}
Z_{\leq L}(s,u_1,u_2,u_3) = Z^{(2)}_{\leq L} \frac{\zeta(2\alpha + 2\beta)}{(1-2^{-2\alpha - 2\beta})^{-1}}
\sum_{(k \ell, 2) = 1}
\frac{((k \ell)^*)^{\frac12 - \beta}}{k^{u_2-iU-s} \ell^{u_3+iU-s}}
 \frac{\gamma(1-\beta)}{\gamma(\beta)}  
 \\
(1 - \chi_{k \ell}(2) 2^{-\beta})
\sum_{(abc,2) = 1} \frac{\mu^2(abc)}{(abc)^{\alpha + 2\beta}} \frac{\mu(b)}{b^{1+\alpha} c^{1+\alpha+\beta}}
\sum_{q=1}^{\infty} \frac{(\frac{qc}{k \ell})}{q^{1-\beta}} 
\prod_{p|k \ell} \frac{(1-\chi^*(p) p^{-\beta})}{(1-\chi^*(p) p^{\beta-1})}
\prod_{p | k \ell} a_p,
\end{multline}
where recall $a_p = 1 + O(p^{-1})$ on $\mathcal{D}_{\infty}$, and $\chi^* = \chi_{k \ell}^*$ is the primitive character induced by $\chi_{k \ell}(n) = (\frac{n}{k \ell})$ (so $\chi^*(n) = (\frac{n}{(k \ell)^*})$).

We next wish to focus on the sums over $k$ and $\ell$.  One small issue is that the parity of the character $\chi_{k \ell}$ (and hence the formula for $\gamma(s)$) may vary.  
However, the parity only depends on $k$ and $\ell$ modulo $8$.  
Also, $q$ may be even, but we can factor out the $2$-part of $q$ and directly evaluate its summation.
Likewise, we can apply quadratic reciprocity (again!) to give that $(\frac{qc}{k \ell})$ equals $(\frac{k \ell}{qc})$ times a function that depends only on $q,c, k,\ell$ modulo $4$.  
Similarly, we have that $\chi_{k \ell}^*(p)$ equals $\chi_p((k \ell)^*)$ up to a function modulo $4$.
We can then use multiplicative Fourier/Mellin decomposition modulo $8$ to express $Z_{\leq L}$ as a finite linear combination, with bounded coefficients, of sums of the form claimed in the statement of the lemma.
\end{proof}

Next we develop some of the analytic properties of $A_q$.  For notational convenience, we consider the case with all $\nu_i = 1$, as the general case is no more difficult.
We expand the Euler product over $p|k \ell$ involving $\chi^*$ into its Dirichlet series and reverse the orders of summation (taking $k, \ell$ to the inside), giving
\begin{equation}
\label{eq:Adef}
A_q = \sum_{(abc d e, 2)=1} \frac{\mu^2(abc) \mu(b) \mu(d)}{(abc)^{\alpha + 2\beta} b^{1 + \alpha} c^{1 + \alpha + \beta} d^{\beta} e^{1-\beta}} A_{q,c,d,e},
\end{equation}
where
\begin{equation}
A_{q,c,d,e} = \sum_{\substack{k \ell \equiv 0 \shortmod{d} \\ (k \ell)^{\infty} \equiv 0 \shortmod{e} \\ (k\ell, 2)=1}} \frac{ (\frac{k \ell}{cq})(\frac{(k \ell)^*}{de})((k \ell)^*)^{\frac12 - \beta}}{k^{u_2-iU-s} \ell^{u_3+iU-s}}
\prod_{p | k \ell} a_p.
\end{equation}
\begin{mylemma}
The function $A_{q,c,d,e}$ has meromorphic continuation to $\mathcal{D}_{\infty}$, in the form
\begin{equation}
\label{eq:AcMeromorphicContinuationFormula}
A_{q,c,d,e}  = L(u_1 + u_2 - iU - \tfrac12, \chi_{qcde}) L(u_1 + u_3 - iU - \tfrac12, \chi_{qcde}) C(\cdot)
\end{equation}
where $C = C_q(c,d,e, s,u_1,u_2,u_3, U)$ is a Dirichlet series analytic on $\mathcal{D}_{\infty}$ and satisfying the bound $C \ll ((de)')^{-2 \min \mathrm{Re} (u_2, u_3) + \varepsilon}$ on $\mathcal{D}_{\infty}$.  
\end{mylemma}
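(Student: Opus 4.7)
The plan is a direct Euler-product analysis. Since the summand of $A_{q,c,d,e}$ is multiplicative in $k \ell$ when $(k \ell, 2) = 1$, and both the divisibility constraint $d \mid k\ell$ and the support condition $e \mid (k\ell)^{\infty}$ are local, I would write
\[
A_{q,c,d,e} = \prod_{p \text{ odd}} A_{q,c,d,e}^{(p)},
\]
where each $A^{(p)}$ is a double geometric series in $X_p := p^{-(u_2 - iU - s)}$ and $Y_p := p^{-(u_3 + iU - s)}$, indexed by $(a,b) = (v_p(k), v_p(\ell))$, with weights assembled from $a_p^{[a+b \geq 1]}$, the Jacobi symbols $\left(\tfrac{\cdot}{cq}\right)$ and $\left(\tfrac{\cdot}{de}\right)$, and the factor $p^{(1/2-\beta)[a+b \text{ odd}]}$ coming from $((k\ell)^*)^{1/2-\beta}$.

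For $p \nmid 2qcde$ I would split the $(a,b)$-sum by the parity of $a+b$, which controls whether $p \mid (k\ell)^*$. Using that Jacobi symbols take values in $\{\pm 1\}$ so that $\left(\tfrac{p}{cq}\right)^{a+b}$ depends only on the parity of $a+b$, the odd-parity part is a geometric series that collapses to
\[
(1 - \chi_{qcde}(p) p^{-(u_1+u_2-iU-1/2)})^{-1} (1 - \chi_{qcde}(p) p^{-(u_1+u_3+iU-1/2)})^{-1},
\]
which is exactly the local factor of the two $L$-functions in the statement. After dividing these out, I would show that the residual local factor $R_p$ equals $1 + O(p^{-1-\delta})$ uniformly on $\mathcal{D}_\infty$: among the terms of $A^{(p)} - L_1^{(p)} L_2^{(p)}$, the only ones that \emph{a priori} exceed $p^{-1-\delta}$ come from $(a,b) \in \{(2,0), (0,2), (1,1)\}$ together with the $\omega_p^2 X_p^2,\ \omega_p^2 Y_p^2,\ \omega_p^2 X_p Y_p$ terms produced by expanding $L_1^{(p)} L_2^{(p)}$ (where $\omega_p = \chi_{qcde}(p) p^{1/2-\beta}$). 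Both families are $O(p^{-1-\delta})$ on $\mathcal{D}_\infty$ thanks to the inequalities $\mathrm{Re}(u_2), \mathrm{Re}(u_3) > 1/2$ and $\mathrm{Re}(u_1) + \min(\mathrm{Re}(u_2), \mathrm{Re}(u_3)) > 1$. The $O(p^{-1})$ part of $a_p$ yields only convergent corrections, so $\prod_{p \nmid 2qcde} R_p$ converges absolutely on $\mathcal{D}_\infty$.

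At primes $p \mid de$ the divisibility and support conditions force $v_p(k) + v_p(\ell)$ to be at least a positive integer depending on $v_p(d)$ and $v_p(e)$. I would perform the same $L$-factor extraction (noting that if $p \mid qcde$ then the corresponding Euler factor of $\chi_{qcde}$ is trivial, so no cancellation with $L$-poles is obstructed) and bound the resulting local remainder termwise, summing the now-finite geometric series in $X_p, Y_p$. Each prime $p \mid (de)'$ contributes at most $p^{-v_p((de)') \cdot 2 \min \mathrm{Re}(u_2, u_3) + \varepsilon}$, and the product over such primes yields the claimed global bound $C \ll ((de)')^{-2 \min \mathrm{Re}(u_2, u_3) + \varepsilon}$.

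The main obstacle is organizing the cancellations so that $C$ is genuinely analytic (not just bounded) on all of $\mathcal{D}_\infty$ once the two $L$-factors are extracted at every prime. This reduces to verifying that the residual Euler product converges on the full domain, for which the definition of $\mathcal{D}_\infty$ is sharply tuned: the borderline terms $\omega_p^2 X_p Y_p \asymp p^{1 - 2\mathrm{Re}(u_1) - \mathrm{Re}(u_2) - \mathrm{Re}(u_3)}$ become nonsummable precisely at the boundary $\mathrm{Re}(u_1) + \min(\mathrm{Re}(u_2), \mathrm{Re}(u_3)) = 1$, while the $X_p^2, Y_p^2$ contributions fail to be summable precisely at $\mathrm{Re}(u_2) = 1/2$ or $\mathrm{Re}(u_3) = 1/2$.
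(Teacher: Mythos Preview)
Your approach via the Euler product is exactly the paper's, and your analysis at the generic primes $p \nmid 2qcde$ is correct. There is one gap at primes $p \mid de$: the divisibility condition $d \mid k\ell$ and the support condition $e \mid (k\ell)^{\infty}$ by themselves only force $v_p(k\ell) \geq 1$, and a single factor of $X_p$ or $Y_p$ does not give the claimed bound $p^{-2\min\mathrm{Re}(u_2,u_3)}$ on the local factor of $C$. The missing observation, which the paper makes explicit, is that the Jacobi symbol $\bigl(\tfrac{(k\ell)^*}{de}\bigr)$ vanishes whenever $p \mid (k\ell)^*$, i.e., whenever $v_p(k\ell)$ is odd; hence only terms with $v_p(k\ell)$ even survive, and combined with $v_p(k\ell) \geq 1$ this forces $v_p(k\ell) \geq 2$. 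From that point the termwise estimate $|X_p^a Y_p^b| \leq p^{-2\min\mathrm{Re}(u_2,u_3)}$ for $a+b \geq 2$ gives exactly the stated bound. Once you insert this one sentence, your argument is complete and essentially identical to the paper's.
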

\begin{proof}
We initially work on $\mathcal{D}_1$ where the sum defining $A_{q,c,d,e}$ converges absolutely.
Now $A_{q,c,d,e}$ has an Euler product, taking the form $A_{q,c,d,e} = \prod_{(p,2)=1} A_{q,c,d,e}^{(p)}$, say, where
\begin{equation}
A_{q,c,d,e}^{(p)} = \sum_{\substack{k + \ell \geq v_p(d) \\ \infty \cdot (k+\ell) \geq v_p(e)} } \frac{(\frac{p^{k+\ell}}{cq})(\frac{(p^{k+\ell})^*}{de}) ((p^{k+\ell})^*)^{\frac12-\beta}}{p^{k(u_2-iU-s) + \ell(u_3 + iU - s)}} a_{p^{k+\ell}} ,
\end{equation}
where $v_p$ is the $p$-adic valuation, and
where we set $a_{p^{0}} = 1$ and $a_{p^j} = a_p$ for $j \geq 1$.

For the forthcoming estimates, we recall our convention from Section \ref{section:notation} that an expression of the form $O(p^{-s})$ should be interpreted to mean $O(p^{-\mathrm{Re}(s)})$.
If $p \nmid de$, then by separating the cases with $k+ \ell$ odd and $k+\ell$ even, we obtain
\begin{align*}
A_{q,c,d,e}^{(p)} &= 1 +  \Big(\frac{p}{qcde}\Big)
\Big[ 
\frac{1}{p^{u_1 + u_2 -iU   -\frac12}} 
+ 
\frac{1}{p^{u_1+u_3 +iU -\frac12}}
\Big] a_p + O(p^{-\min(2u_2,  2 u_3)})) \\
&= 
1 +  \Big(\frac{p}{qcde}\Big)
\Big[ 
\frac{1}{p^{u_1 + u_2 -iU   -\frac12}} 
+ 
\frac{1}{p^{u_1+u_3 +iU -\frac12}}
\Big] + O(p^{-\min(2u_2,  2 u_3)}) + O(\frac{p^{-1}}{p^{u_1 + \min(u_2, u_3) - \frac12}})
\\
& =
\frac{1 + O(p^{-\min(2u_2,  2 u_3)}) + O(\frac{p^{-1}}{p^{u_1 + \min(u_2, u_3) - \frac12}})
+ O(p^{-2(u_1 + \min(u_2, u_3) - \frac12)})}{ (1- \chi_{qcde}(p) p^{-u_1-u_2+iU+\frac12}) (1- \chi_{qcde}(p) p^{-u_1-u_3-iU+\frac12}) }.
\end{align*}
Note that on $\mathcal{D}_{\infty}^{\sigma}$, the $O$-term is of size $O(p^{-1-\sigma})$, 
and hence
\begin{equation}
\prod_{p \nmid de} A_{q,c,d,e}^{(p)} = L(u_1 + u_2 - iU - \tfrac12, \chi_{qcde}) L(u_1 + u_3 - iU - \tfrac12, \chi_{qcde}) B
\end{equation}
where $B = B(q,c,d,e,  s,u_1, u_2, u_3, U)$ is an Euler product that is absolutely convergent and bounded on $\mathcal{D}_{\infty}^{\sigma}$.

If $p | de$, then $(\frac{(p^{k+\ell})^*}{de})=0$ unless $(p^{k+\ell})^* = 1$, so we can assume that $k+\ell$ is even (and positive, hence $\geq 2$).  From such primes we obtain
$
A_{q,c,d,e}^{(p)} = O(p^{-\min(2u_2,  2 u_3)})
$, and hence
\begin{equation}
\prod_{p|de} A_{q,c,d,e}^{(p)} \ll ((de)')^{-2 \min \mathrm{Re} (u_2, u_3) + \varepsilon},
\end{equation}
where $(de)' = \prod_{p|de} p$.
Putting the estimates together, we deduce (initially in $\mathcal{D}_1$) the representation \eqref{eq:AcMeromorphicContinuationFormula}, where $C$ is analytic on $\mathcal{D}_{\infty}$.
Thus $A_{q,c,d,e}$ inherits the meromorphic continuation to $\mathcal{D}_{\infty}$ as well.
\end{proof}

\begin{mydefi}
 Let $\mathcal{D}_2$ be the set of $(s,u_1, u_2, u_3) \in \mc^4$ with $\mathrm{Re}(s) =0$, $\mathrm{Re}(u_2) > 1/2$, $\mathrm{Re}(u_3) > 1/2$, and satisfying
\begin{equation}
\mathrm{Re}(u_1) +  \min(\mathrm{Re}(u_2), \mathrm{Re}(u_3)) > 3/2.
\end{equation}
\end{mydefi}
One easily checks that $\mathcal{D}_1 \subset \mathcal{D}_2  \subset \mathcal{D}_{\infty}.$

\begin{mylemma}
\label{lemma:Aqproperties}
The function $A_q$ has meromorphic continuation to $\mathcal{D}_2 \cap \{\mathrm{Re}(u_1) < 1/2 \}$.
\end{mylemma}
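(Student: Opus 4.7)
The plan is to substitute the factorization from \eqref{eq:AcMeromorphicContinuationFormula} into the defining series \eqref{eq:Adef} of $A_q$ and prove absolute convergence of the resulting five-fold sum on $\mathcal{D}_2 \cap \{\mathrm{Re}(u_1) < 1/2\}$. Since $C_q$ is analytic on $\mathcal{D}_\infty \supset \mathcal{D}_2 \cap \{\mathrm{Re}(u_1) < 1/2\}$ and the two L-factors will be seen to be analytic and uniformly bounded on the target domain, absolute convergence will immediately yield that $A_q$ is in fact analytic there, which is stronger than the claimed meromorphic continuation. The key preliminary observation is that the inequality $\mathrm{Re}(u_1) + \min(\mathrm{Re}(u_2), \mathrm{Re}(u_3)) > 3/2$ defining $\mathcal{D}_2$ combined with the hypothesis $\mathrm{Re}(u_1) < 1/2$ forces $\min(\mathrm{Re}(u_2), \mathrm{Re}(u_3)) > 1$, so that both L-factors $L(u_1 + u_i - iU - \tfrac12, \chi_{qcde})$ (for $i = 2, 3$) have arguments with real part strictly greater than $1$ and are bounded uniformly in the character.

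For the outer sums: the sums over the squarefree variables $a, b, c$ are routine, because a direct calculation with $\mathrm{Re}(\alpha) = \mathrm{Re}(u_2 + u_3) - 1$ and $\mathrm{Re}(\beta) = \mathrm{Re}(u_1)$, combined with the $\mathcal{D}_2$-inequalities, shows that the real parts of the exponents appearing on $a^{-(\alpha+2\beta)}$, $b^{-(1+2\alpha+2\beta)}$, $c^{-(1+2\alpha+3\beta)}$ all strictly exceed $1$ throughout the target domain. The delicate sums are those over $d$ (squarefree) and $e$: the bare weights $d^{-\beta}$ and $e^{\beta - 1}$ are not absolutely summable in the relevant range of $\mathrm{Re}(\beta)$, but the bound $C_q \ll ((de)')^{-2\min(\mathrm{Re}(u_2), \mathrm{Re}(u_3)) + \varepsilon}$ from the preceding lemma, combined with $\min(\mathrm{Re}(u_2), \mathrm{Re}(u_3)) > 1$, contributes an extra decay of $O(p^{-2+\varepsilon})$ at each prime $p$ dividing $d$ or $e$. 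Decomposing into Euler products: the per-prime $d$-contribution is bounded by $p^{-\mathrm{Re}(u_1) - 2\min(\mathrm{Re}(u_2),\mathrm{Re}(u_3)) + \varepsilon}$, which is $O(p^{-5/2+\varepsilon})$ thanks to $\mathrm{Re}(u_1) + 2\min(\mathrm{Re}(u_2), \mathrm{Re}(u_3)) > \tfrac{3}{2} + 1 = \tfrac{5}{2}$; the per-prime $e$-contribution is a geometric series $\sum_{j \ge 1} p^{j(\beta - 1)} \cdot O(p^{-2+\varepsilon})$ which converges in $j$ because $\mathrm{Re}(\beta) < 1$, and yields an analogous $O(p^{-5/2 + \varepsilon})$ bound after summing in $j$.

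The main obstacle is the sum over $e$, which fails to converge absolutely without the combined use of both (i) the upper bound $\mathrm{Re}(u_1) < 1/2$, ensuring $\mathrm{Re}(\beta) < 1$ and hence geometric convergence of the inner $j$-sum, and (ii) the strengthened lower bound $\min(\mathrm{Re}(u_2), \mathrm{Re}(u_3)) > 1$, which makes the decay $O(p^{-2+\varepsilon})$ from $C_q$ strong enough to ensure convergence of the product over primes. These two requirements precisely carve out the natural domain $\mathcal{D}_2 \cap \{\mathrm{Re}(u_1) < 1/2\}$ appearing in the statement. The general case where the characters $\nu_1, \dots, \nu_6$ are arbitrary (rather than trivial) modulo $8$ is handled identically, since the estimates used depend only on the absolute sizes of the weights.
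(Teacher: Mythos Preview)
Your proof is correct and follows essentially the same approach as the paper: substitute \eqref{eq:AcMeromorphicContinuationFormula} into \eqref{eq:Adef}, observe that on $\mathcal{D}_2$ the $L$-factors lie in the half-plane of absolute convergence (hence are uniformly bounded), and then verify absolute convergence of the remaining arithmetic sum using the bound on $C_q$. The only organizational difference is that the paper groups $cde = r$ and packages the $a,b,c,d,e$-sum into the single function $D(q,r)$ of \eqref{eq:DformulaInTermsOfC}, proving the bound \eqref{eq:Dsumbound} on the larger region $\mathcal{D}_\infty \cap \{\mathrm{Re}(u_1) < 1/2\}$; this formulation is reused verbatim in the proof of Lemma~\ref{lemma:ZpropertiesAfterFunctionalEquation}, whereas your variable-by-variable bookkeeping, while sufficient here, would need to be repackaged for that next step.
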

\begin{proof}
We (initially) work in the domain $\mathcal{D}_1$, where the absolute convergence is ensured. 
Substituting \eqref{eq:AcMeromorphicContinuationFormula} into \eqref{eq:Aqdef}, and letting $cde = r$, we obtain
\begin{equation}
\label{eq:Aformula}
A_q = 
\sum_{(r,2) = 1 } L(u_1 + u_2 - iU-\tfrac12, \chi_{qr})
L(u_1 + u_3 + iU-\tfrac12, \chi_{qr}) D(q,r)
,
\end{equation}
where
\begin{equation}
\label{eq:DformulaInTermsOfC}
D(q,r) = \sum_{\substack{(ab,2)=1 \\ cde = r}}
\frac{\mu^2(abc)  \mu(b) \mu(d)}{(abc)^{\alpha + 2\beta} b^{1 + \alpha} c^{1 + \alpha + \beta} d^{\beta} e^{1-\beta}} C_q(\cdot).
\end{equation}
We claim that $D(q,r)$ is analytic on $\mathcal{D}_{\infty} \cap \{ \mathrm{Re}(u_1) < 1/2 \}$ and therein satisfies the bound
\begin{equation}
\label{eq:Dsumbound}
D(q,r) \ll q^{\varepsilon} r^{\beta - 1 + \varepsilon} \prod_{p|r} p^{-2 u_1 - 2 \min(u_2, u_3)+1}.
\end{equation}
We now prove this claim.
 Recall \eqref{eq:DinfinityalphabetaDeduction}, which in particular immediately shows the
 absolute convergence in $\mathcal{D}_{\infty}$ of the free sum over $a,b$ in \eqref{eq:DformulaInTermsOfC}.  Hence
\begin{equation}
 |D(q,r)| 
\ll 
r^{\varepsilon}
\sum_{cde=r}
\frac{\mu^2(c) \mu^2(d) ((de)')^{-2 \min(u_2, u_3)}}{
c^{1 + 2\alpha + 3\beta} 
d^{\beta} 
e^{1-\beta}
}
= \sum_{cd | r} 
\frac{\mu^2(c) \mu^2(d) ((r/c)')^{-2 \min(u_2, u_3)}}{
c^{1 + 2\alpha + 3\beta} 
d^{\beta} 
(\frac{r}{cd})^{1-\beta}}
.
\end{equation}
One may now check \eqref{eq:Dsumbound} by brute force, prime-by-prime (by multiplicativity).

A consequence of \eqref{eq:Dsumbound} is that on $\mathcal{D}_{\infty} \cap \{\mathrm{Re}(u_1) < 1/2 \}$ we have the bound $D(q, p^k) \ll p^{k \varepsilon} p^{-1-\frac{k}{2}}$, for $p$ prime and $k \geq 1$, which extends multiplicatively.  Therefore,
 $\sum_{r} |D(q,r)| < \infty$ on $\mathcal{D}_{\infty} \cap \{ \mathrm{Re}(u_1) < 1/2 \}$.  The Dirichlet $L$-functions appearing in \eqref{eq:Aformula} are at most $O((qr)^{\varepsilon})$ on $\mathcal{D}_2$.  Therefore, \eqref{eq:Aformula} gives the meromorphic continuation of $A_q$ as stated in the lemma. 
\end{proof}

\begin{mylemma}
\label{lemma:ZpropertiesAfterFunctionalEquation}
On $\mathcal{D}_2 \cap \{ \mathrm{Re}(u_1) < 0 \}$, the function
 $Z_{\leq L}$ extends to a meromorphic function, on which it
is a finite linear combination of absolutely convergent sums of the form
\begin{multline}
\label{eq:Zdecomp}
Z_{\leq L}^{(2)} \frac{\gamma(1-\beta)}{\gamma(\beta)}
\sumstar_{ (r,2)=1} \sumstar_{(q,2)=1}  \frac{c_{q,r}}{q^{1-\beta}}
L(u_1 + u_2 - iU - \tfrac12, \chi_{qr} \nu) L(u_1 + u_3 - iU - \tfrac12, \chi_{qr} \nu'),
\end{multline}
where $\nu$, $\nu'$ are Dirichlet characters modulo $8$, and
$\sum^*$ means that the sum runs only over square-free integers.  Here $c_{q,r}$ is a Dirichlet series depending on $s, u_1, u_2, u_3, U$ that is analytic on $\mathcal{D}_{\infty} \cap \{ \mathrm{Re}(u_1) < 1/2 \}$, wherein it satisfies the bound
\begin{equation}
\label{eq:cqrbound}
c_{q,r} \ll  r^{-u_1 - 2 \min(u_2, u_3)} (qr)^{\varepsilon}.
\end{equation}
\end{mylemma}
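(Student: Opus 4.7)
The plan is to insert the Dirichlet series representation of $A_q$ from \eqref{eq:Aformula} into the formula \eqref{eq:ZleqLFormula} provided by Lemma \ref{lemma:ZlegLFormulalemma}, and then extract the square-free parts of the summation variables $q$ and $r$ so that the result takes the form \eqref{eq:Zdecomp}.

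First I would work on $\mathcal{D}_1 \cap \{\mathrm{Re}(u_1) < 0\}$, where both \eqref{eq:ZleqLFormula} and \eqref{eq:Aformula} are valid with absolute convergence (note $\mathrm{Re}(\beta) = \mathrm{Re}(u_1)$ since $\mathrm{Re}(s)=0$). Substituting \eqref{eq:Aformula} into \eqref{eq:ZleqLFormula} expresses $Z_{\leq L}$ as a finite linear combination of sums over all odd positive $q, r$ of $L(u_1+u_2-iU-\tfrac12,\chi_{qr})\,L(u_1+u_3-iU-\tfrac12,\chi_{qr})$ weighted by $\nu_1(q)\,q^{\beta-1} D(q,r)$ together with the prefactor $Z^{(2)}_{\leq L}\,\gamma(1-\beta)/\gamma(\beta)$ and the $(q,r)$-independent Euler-$2$ factors $\zeta(2\alpha+2\beta)/(1-2^{-2\alpha-2\beta})^{-1}$ and $(1\pm 2^{-\beta})/(1\pm 2^{\beta-1})$. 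To pass to the square-free form, I write $q=q^*Q^2$ and $r=r^*R^2$ with $q^*,r^*$ odd square-free and $(QR,2)=1$. The Jacobi symbol identity $\chi_{qr}(n)=\chi_{q^*r^*}(n)\cdot\delta((n,QR)=1)$ gives $L(s,\chi_{qr})=L(s,\chi_{q^*r^*})\prod_{p|QR,\,p\nmid q^*r^*}(1-\chi_{q^*r^*}(p)p^{-s})$, i.e.\ the same $L$-function twisted by a finite product of local Euler factors. Bundling everything that does not appear explicitly inside the $L$-functions --- the inner sums over $Q, R$, the extra local Euler factors at primes of $QR$, the $(q,r)$-independent factors listed above, and any mod-$8$ residue phases (absorbed into new characters $\nu,\nu'$ after a further application of quadratic reciprocity) --- into a single coefficient $c_{q^*,r^*}$, and then relabelling $(q^*,r^*)$ as $(q,r)$, yields the claimed form \eqref{eq:Zdecomp}.

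The bound \eqref{eq:cqrbound} on $c_{q,r}$ follows from \eqref{eq:Dsumbound} by a prime-by-prime estimation: on $\mathcal{D}_\infty \cap \{\mathrm{Re}(u_1)<1/2\}$ the inner sums over the square parts $Q,R$ and the auxiliary local Euler factors at primes of $QR$ converge absolutely and contribute $(qr)^\varepsilon$, while the $r^{\beta-1}$ from \eqref{eq:Dsumbound} combines with $\prod_{p|r} p^{-2u_1-2\min(u_2,u_3)+1}$ (for $r$ square-free) to yield exactly the power $r^{-u_1-2\min(u_2,u_3)}$ asserted. To finish, I would verify absolute convergence of \eqref{eq:Zdecomp} on the target domain $\mathcal{D}_2\cap\{\mathrm{Re}(u_1)<0\}$. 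The constraints $\mathrm{Re}(u_2),\mathrm{Re}(u_3)>1/2$, $\mathrm{Re}(u_1)+\min(\mathrm{Re}(u_2),\mathrm{Re}(u_3))>3/2$, and $\mathrm{Re}(u_1)<0$ together force $\min(\mathrm{Re}(u_2),\mathrm{Re}(u_3))>3/2$, so each of the two Dirichlet $L$-functions in \eqref{eq:Zdecomp} has argument with real part strictly greater than $1$ and contributes $O((qr)^\varepsilon)$. Combined with \eqref{eq:cqrbound}, the double sum is then majorised by $\sum^*_{q,r} q^{\mathrm{Re}(u_1)-1+\varepsilon}\,r^{-\mathrm{Re}(u_1)-2\min(\mathrm{Re}(u_2),\mathrm{Re}(u_3))+\varepsilon}$; the $q$-exponent is $<-1$ by $\mathrm{Re}(u_1)<0$, while $\mathrm{Re}(u_1)+\min>3/2$ forces the $r$-exponent to be $<\mathrm{Re}(u_1)-3<-3$, so both sums converge. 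This extends the meromorphic continuation of $Z_{\leq L}$ from $\mathcal{D}_1\cap\{\mathrm{Re}(u_1)<0\}$ to the strictly larger domain $\mathcal{D}_2\cap\{\mathrm{Re}(u_1)<0\}$.

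The main technical obstacle is the careful bookkeeping of the square-free reduction in the presence of the Jacobi-symbol character $\chi_{qr}$: one must simultaneously track the vanishing of the Jacobi symbol where $(n,q^*r^*)\neq 1$, the corrections produced by the square parts $Q^2,R^2$, and the mod-$8$ phases introduced by iterated quadratic reciprocity, packaging them neatly into $c_{q,r}$ without disturbing either the explicit $q^{1-\beta}$ prefactor or the arguments of the $L$-functions appearing in \eqref{eq:Zdecomp}.
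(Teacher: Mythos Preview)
Your proposal is correct and follows essentially the same route as the paper: start on $\mathcal{D}_1 \cap \{\mathrm{Re}(u_1) < 0\}$, insert \eqref{eq:Aformula} into \eqref{eq:ZleqLFormula}, extract square-free kernels $q^*, r^*$ from $q,r$ while absorbing the square parts and the extra local Euler factors into $c_{q,r}$, and then read off the bound \eqref{eq:cqrbound} from \eqref{eq:Dsumbound}. One small inaccuracy: the characters $\nu,\nu'$ in \eqref{eq:Zdecomp} are not produced by a \emph{further} application of quadratic reciprocity at this stage; they are inherited from the characters $\nu_3,\nu_4$ already present in \eqref{eq:Aqdef}, which twist the $k,\ell$ sums and hence the resulting $L$-functions in \eqref{eq:AcMeromorphicContinuationFormula}.
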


\begin{proof}
We work initially on the domain $\mathcal{D}_1 \cap \{ \mathrm{Re}(u_1) < 0 \}$ so that Lemma \ref{lemma:ZlegLFormulalemma} may be applied, giving \eqref{eq:ZleqLFormula}.   Now Lemma \ref{lemma:Aqproperties} may be invoked to give that $Z_{\leq L}$ is a linear combination of terms of the form
\begin{multline}
\label{eq:ZleqLDecomp3}
Z^{(2)}_{\leq L} \frac{\gamma(1-\beta)}{\gamma(\beta)} \frac{\zeta(2\alpha + 2\beta)}{(1-2^{-2\alpha - 2\beta})^{-1}} \  \frac{(1 \pm 2^{-\beta})}{(1 \pm  2^{\beta-1})}
\\
\sum_{(q,2) = 1} q^{\beta -1}
\sum_{(r,2) = 1 } L(u_1 + u_2 - iU-\tfrac12, \chi_{qr} \nu)
L(u_1 + u_3 + iU-\tfrac12, \chi_{qr} \nu') D(q,r),
\end{multline}
which converges absolutely on $\mathcal{D}_2 \cap \{ \mathrm{Re}(\beta) < 0 \}$. 
 This gives the claimed meromorphic continuation of $Z_{\leq L}$.
 
 Next we show the claimed form \eqref{eq:Zdecomp}, which resembles closely the expression \eqref{eq:ZleqLDecomp3} except that we need to restrict $q$ and $r$ to be square-free.  Towards this end, replace $q$ by $q q_2^2$ and $r$ by $r r_2^2$ where the new $q$ and $r$ are square-free.  Note that $L(s, \chi_{q r q_2^2 r_2^2})$ agrees with $L(s, \chi_{qr})$ up to finitely many Euler factors that are bounded by $O((qr)^{\varepsilon})$ for $\mathrm{Re}(s) > 1/2$.  These finite Euler products can be incorporated into the definition of $D(q,r)$, which still satisfies \eqref{eq:Dsumbound} on $\mathcal{D}_{\infty} \cap \{ \mathrm{Re}(u_1) < 1/2 \}$.  Then we need to check the convergence in the sums over $q_2$ and $r_2$.  To this end, we first note simply that $\sum_{(q_2,2)=1} q_2^{2(\beta - 1)}=\zeta(2 - 2 \beta) (1- 2^{-2+2 \beta})$, which is analytic and bounded for $\mathrm{Re}(u_1) \leq 1/2 - \sigma$.  For $r_2$, we have from \eqref{eq:Dsumbound} that 
 \begin{equation}
 \sum_{r_2} |D(q, r r_2^2)| 
 \ll 
 \sum_{r_2} 
 q^{\varepsilon} (r r_2^2)^{\beta - 1 + \varepsilon} \prod_{p|r} p^{-2 u_1 - 2 \min(u_2, u_3)+1}
 \ll 
 (qr)^{\varepsilon} r^{- u_1 - 2 \min(u_2, u_3)}.
 \end{equation}
Finally, this gives the meromorphic continuation of $Z_{\leq L}$ to $\mathcal{D}_2 \cap \{\mathrm{Re}(u_1) < 0\}$ with the coefficients $c_{q,r}$ analytic on $\mathcal{D}_{\infty} \cap \{ \mathrm{Re}(u_1) < 1/2 \}$ and satisfying \eqref{eq:cqrbound}.
\end{proof}

\section{Completion of the proof of Theorem \ref{thm:mainthm}}
\label{section:proofofthm1}
Recall that the off-diagonal of $\sum_{T < t_j <T + \Delta} |L(\mathrm{sym}^2 u_j, 1/2+iU)|^2$ is a sum which we have been studying in dyadic intervals $n\asymp m\asymp N$ and $c\asymp C$ . Recall that $N \ll U^{1/2} T^{1+\varepsilon}$,  $C \ll \frac{N^2 T^{\varepsilon}}{\Delta T}$, and $C \gg \frac{N^2 T^{\varepsilon}}{T^2}$, originating from \eqref{eq:momentFirstFormula}, \eqref{eq:cUpperBound}, and \eqref{eq:ClowerBound}.
We also defined certain parameters $\Phi, P, K$ which can be found in \eqref{eq:parameterDefsLargeU}, but for convenience we recall here $\Phi = \frac{N \sqrt{C}}{U}$, $P = \frac{CT^2}{N^2}$, $K = \frac{CU}{N}$. Aided by the properties of $Z$ developed in the previous section, we are now ready to finish the proof of Theorem \ref{thm:mainthm}. 
We pick up from the expression \eqref{eq:SHFormulaInTermsOfZfunction}, where we begin with $\mathrm{Re}(u_1) = \mathrm{Re}(u_2) = \mathrm{Re}(u_3) = 2$.  Next we write $Z = Z_{\leq L} + Z_{>L}$, and choose $L$ so that $2^{L} \asymp C T^{\varepsilon}$.  To bound the contribution from $Z_{>L}$, we shift $u_1$ far to the right, and use the bound \eqref{eq:Z2tailBound}.  In terms of $u_1$, we get a bound of size $O((C/2^L)^{\mathrm{Re}(u_1)} \ll T^{-\varepsilon \mathrm{Re}(u_1)}$ which is negligible.  Next we focus on $Z_{\leq L}$.

We begin by shifting $u_1$ to the line $-\varepsilon$, which is allowed by Lemma \ref{lemma:Zmeromorphic}.  There is a pole of $Z_{\leq L}$ at $\beta = u_1 + s = 1$, with bounded residue.
 However, since $\mathrm{Im}(s) \asymp P$ and $P\gg T^\epsilon$, the weight function is very small at this height and the contribution from such poles are negligible.  Thus we obtain
\begin{multline}
\label{eq:SHafterFunctionalEquation}
\mathcal{S}(H_{+}) = 
\sum_C \frac{\Delta T}{NC^{3/2}} 
\frac{\Phi}{\sqrt{P}}
\int_{-t \asymp P} \int \int \int
\Big(\frac{T^2}{U^2} - \frac{1}{4}\Big)^s  v(t)
\widetilde{w}(u_1, u_2, u_3) 
C^{u_1} K^{u_2 + u_3} 
\frac{\gamma(1-u_1-s)}{\gamma(u_1+s)}
\\
Z^{(2)}_{\leq L} 
\sumstar_{r} \sumstar_{q} q^{u_1+s-1}c_{q,r}
L(u_1 + u_2 - iU - \tfrac12, \chi_{qr}) L(u_1 + u_3 - iU - \tfrac12, \chi_{qr})
du_1 du_2 du_3 ds,
\end{multline}
plus a small error term, as well as additional terms with the characters twisted modulo $8$. Since all our estimates hold verbatim for these additional twists, we suppress this from the notation.
Next we want to truncate the sums over $q$ and $r$.  To do so, we move $u_1$ far to the left, keeping $\mathrm{Re}(u_2) = \mathrm{Re}(u_3) = -\mathrm{Re}(u_1) + 100$.  Note that this remains in the domain $\mathcal{D}_{2}'$ and that $\mathrm{Re}(u_1) < 0$ so that the conditions of Lemma \ref{lemma:ZpropertiesAfterFunctionalEquation} remain in place to apply \eqref{eq:Zdecomp}.  Also, note
that the coefficients $c_{q,r}$ are $O(r^{-100})$ here.  Moreover, we observe by Stirling that
\begin{equation}
\Big|\frac{\gamma(1-u_1-s)}{\gamma(u_1+s)}\Big| \ll P^{\frac12 - \mathrm{Re}(u_1)}.
\end{equation}
In terms of the $u_1$-variable, the integrand in \eqref{eq:SHafterFunctionalEquation} is bounded by some fixed polynomial in $T$ times
\begin{equation}
 \Big(\frac{Cq}{PK^2} \Big)^{\mathrm{Re}(u_1)}.
\end{equation}
Therefore, we may truncate $q$ at $q \leq Q$ where
\begin{equation}
Q = \frac{P K^2}{C} T^{\varepsilon}.
\end{equation}
After enforcing this condition, 
and reversing the orders of summation (taking $r,q$ to the outside of the integrals), 
we shift the contours of integration so that $\mathrm{Re}(u_1) = 1/2-\varepsilon$ and $\mathrm{Re}(u_2) = \mathrm{Re}(u_3) = 1/2 + \varepsilon$; this is allowed by Lemma \ref{lemma:ZpropertiesAfterFunctionalEquation} as these contours shifts may be done in such a way that we remain in the domain $\mathcal{D}_{\infty} \cap \{ \mathrm{Re}(u_1) < 1/2 \}$ on which $c_{q,r}$ is analytic.  
Moreover, we observe from \eqref{eq:Z2initialsegmentBound} that $Z_{\leq L}^{(2)} \ll L \ll T^{\varepsilon}$ on this contour.
We then bound everything with absolute values, obtaining
\begin{multline}
\mathcal{S}(H_{+}) \ll T^{\varepsilon}
\max_C \frac{\Delta T}{NC^{3/2}} 
\frac{\Phi}{\sqrt{P}}
\ \int \int \int
\max_{\substack{x > 0 \\ q,r \ll Q}} \Big| \int_{-t \asymp P} x^{it} \frac{\gamma(1-u_1-it)}{\gamma(u_1+it)}  v(t) c_{q,r} dt \Big|
\\
|\widetilde{w}(u_1, u_2, u_3)|
C^{1/2} K
 \sumstar_{q \leq Q} q^{-1/2} 
|L(u_1 + u_2 - iU - \tfrac12, \chi_{q})|^2 
du_1 du_2 du_3.
\end{multline}
By Lemma \ref{lemma:SomeIntegralWithv}, keeping in mind that $c_{q,r}$ is given by a Dirichlet series, uniformly bounded in $t$ by \eqref{eq:cqrbound}, 
we have
\begin{equation}
\label{eq:gammaintegralLargeU}
\max_{x > 0} \Big| P^{-1/2} \int x^{it} \frac{\gamma(1-u_1-it)}{\gamma(u_1+it)} v(t)  c_{q,r} dt \Big| \ll 1.
\end{equation}
Applying \eqref{eq:HBsecondmomentwithweights}, we then obtain
\begin{equation}
\label{eq:SleqDBoundviaQuadraticLargeSieve}
\mathcal{S}(H_{+}) \ll T^{\varepsilon}
\max_C \frac{\Delta T}{NC^{}} 
\Phi
 K^{} 
(Q^{1/2} + U^{1/2}),
\qquad Q = \frac{PK^2}{C} T^{\varepsilon}.
\end{equation}
Therefore, we obtain 
\begin{equation}
\mathcal{S}(H_{+}) \ll T^{\varepsilon}
\max_C \frac{\Delta T}{NC^{}} 
\Phi
 K^{} 
 \Big(
 \frac{P^{1/2} K }{C^{1/2}} + U^{1/2}
 \Big)
 \ll 
 T^{\varepsilon}
 \max_C
 \frac{\Delta T}{NC^{}} 
\frac{N \sqrt{C}}{U}
 \frac{CU}{N}
 \Big(
 \frac{ T CU }{N^2} + U^{1/2}
 \Big).
\end{equation}
Using $C \ll \frac{N^2}{\Delta T} T^{\varepsilon}$, 
this simplifies as
\begin{equation}
\mathcal{S}(H_{+}) \ll
 T^{\varepsilon}
 \max_C
\frac{\Delta T}{N}
\sqrt{C}
 \Big(
 \frac{ T CU }{N^2} + U^{1/2}
 \Big)
 \ll  T^{\varepsilon}
 (\frac{T^{1/2} U }{\Delta^{1/2}}
  +   (\Delta T U)^{1/2} ).
\end{equation}
By \eqref{after-kuznetsov-2} and the remark following it, this implies
\begin{equation}
\label{eq:finalboundpresimplified}
 \sum_{T < t_j <T+\Delta} 
 |L(\mathrm{sym}^2 u_j, 1/2+iU)|^2
 \ll T^{\varepsilon}(\Delta T + \frac{T^{1/2} U }{\Delta^{1/2}})
 .
\end{equation}
We have $\Delta T\gg  \frac{T^{1/2} U }{\Delta^{1/2}}$ if and only if $\Delta\gg \frac{U^{2/3}}{T^{1/3}}$. This inequality holds because one of the conditions of Theorem \ref{thm:mainthm} requires $\Delta\gg \frac{T}{U^{2/3}}$, and $\frac{T}{U^{2/3}}\gg  \frac{U^{2/3}}{T^{1/3}}$ because $T\gg U$.

\section{Proving Theorem \ref{thm:CentralValueBound}}
For the proof of Theorem \ref{thm:CentralValueBound},  the parameters $\Phi, P, K$ are given in \eqref{eq:parameterDefsSmallU}, which for convenience we recall take the form $\Phi = \frac{N^3}{\sqrt{C} T^2}$, $P = \frac{C T^2}{N^2}$, $K = \frac{C^2 T^2}{N^3}$.  The bounds on $N$ and $C$ are the same as recollected in Section \ref{section:proofofthm1}.
The overall idea is to follow the same steps as in Section \ref{section:proofofthm1}, though picking up with \eqref{eq:SHFormulaInTermsOfZfunctionSmallU} instead of \eqref{eq:SHFormulaInTermsOfZfunction}.  The only structural difference between the two formulas is the additional phase of the form
\begin{equation}
e^{-2it \log(\frac{|t|}{eT}) + i a\frac{t^3}{T^2}}.
\end{equation}
Here the cubic term is of size $O(P T^{-\delta})$, as mentioned following \eqref{eq:v0formula}.
This only affects the argument in bounding \eqref{eq:gammaintegralLargeU}, but Lemma \ref{lemma:SomeIntegralWithv} is applicable (using the above remark that the cubic term is of lower-order) and gives the same bound with the above additional phase.
Referring to \eqref{eq:SleqDBoundviaQuadraticLargeSieve}, we thus obtain
\begin{equation}
\mathcal{S}(H_{+}) \ll T^{\varepsilon}
\max_C \frac{\Delta T}{NC^{}} 
\Phi
 K^{} 
 \Big(
 \frac{P^{1/2} K}{C^{1/2}} + U^{1/2}
 \Big)
 \ll 
 T^{\varepsilon}
 \max_C
 \frac{\Delta T}{NC^{}} 
\frac{N^3}{C^{1/2} T^2}
 \frac{C^2 T^2}{N^3}
 \Big(
 \frac{ T^3 C^2 }{N^4} + 1
 \Big).
\end{equation}
Using $C \ll \frac{N^2}{\Delta T} T^{\varepsilon}$, 
this simplifies as
\begin{equation}
\mathcal{S}(H_{+}) \ll  T^{\varepsilon}  \max_C \frac{\Delta T}{N} C^{1/2}
\Big(\frac{ T^3 C^2 }{N^4} + 1 \Big)
\ll  T^{\varepsilon}
\Big(\frac{T^{3/2} }{\Delta^{3/2}} 
+ (\Delta T)^{1/2}
\Big)
\end{equation}
Thus in all, by \eqref{after-kuznetsov-2} and the remark following it, we obtain
\begin{equation}
\sum_{T < t_j <T+\Delta} 
 |L(\mathrm{sym}^2 u_j, 1/2)|^2 \ll  T^{\varepsilon}
\Big(\Delta T 
+
\frac{T^{3/2}}{\Delta^{3/2}} 
\Big).
\end{equation}
The second term is smaller than the first term if and only if $\Delta \gg T^{1/5}$.

\end{document}